\def\@tocline#1#2#3#4#5#6#7{\relax
  \ifnum #1>\c@tocdepth 
  \else
    \par \addpenalty\@secpenalty\addvspace{#2}%
    \begingroup \hyphenpenalty\@M
    \@ifempty{#4}{%
      \@tempdima\csname r@tocindent\number#1\endcsname\relax
    }{%
      \@tempdima#4\relax
    }%
    \parindent\z@ \leftskip#3\relax \advance\leftskip\@tempdima\relax
    \rightskip\@pnumwidth plus4em \parfillskip-\@pnumwidth
    #5\leavevmode\hskip-\@tempdima
      \ifcase #1
      \or\or \hskip 2em \or \hskip 2em \else \hskip 3em \fi%
      #6\nobreak\relax
    \dotfill\hbox to\@pnumwidth{\@tocpagenum{#7}}\par
    \nobreak
    \endgroup
  \fi}
\newcommand{\eq}[2]{\begin{equation}\label{#1}#2 \end{equation}}
\newcommand{\ml}[2]{\begin{multline}\label{#1}#2 \end{multline}}
\newcommand{\mlnl}[1]{\begin{multline*}#1 \end{multline*}}
\newcommand{\arir}{\ar@{^{(}->}}
\newcommand{\aril}{\ar@{_{(}->}}
\newcommand{\are}{\ar@{>>}}
\newcommand{\xr}[1] {\xrightarrow{#1}}
\newcommand{\xrightarrowdbl}[2][]{ \xrightarrow[#1]{#2}\mathrel{\mkern-14mu}\rightarrow}
\newtheorem{lem}{Lemma}[section]
\newtheorem{thm}[lem]{Theorem}
\newtheorem{prop}[lem]{Proposition}
\newtheorem{cor}[lem]{Corollary}
\theoremstyle{definition}
\newtheorem{defn}[lem]{Definition}
\newtheorem{defn-prop}[lem]{Definition-Proposition}
\newtheorem{nota}[lem]{Notation}
\newtheorem{para}[lem]{}
\newtheorem*{acknowledgement}{Acknowledgement}
\theoremstyle{remark}
\newtheorem{rmk}[lem]{Remark}
\newtheorem{exs-rmks}[lem]{Examples and Remarks}
\newtheorem{claim}{Claim}[lem]
\newtheorem*{claim*}{Claim}
\newcounter{zaehler} 
\numberwithin{equation}{lem}
\newcommand{\Q}{\mathbb{Q}}
\newcommand{\Z}{\mathbb{Z}}
\renewcommand{\P}{\mathbf{P}}
\newcommand{\A}{\mathbf{A}}
\newcommand{\G}{\mathbf{G}}
\newcommand{\sF}{\mathcal{F}}
\newcommand{\sG}{\mathcal{G}}
\newcommand{\sO}{\mathcal{O}}
\newcommand{\sU}{\mathcal{U}}
\newcommand{\fp}{\mathfrak{p}}
\newcommand{\fr}{\mathfrak{r}}
\newcommand{\Xb}{{\overline{X}}}
\newcommand{\Db}{{\overline{D}}}
\newcommand{\Vb}{{\overline{V}}}
\newcommand{\tF}{{\widetilde{F}}}
\newcommand{\tY}{{\widetilde{Y}}}
\newcommand{\ux}{{\underline{x}}}
\newcommand{\uy}{{\underline{y}}}
\newcommand{\uz}{{\underline{z}}}
\newcommand{\RSC}{{\operatorname{\mathbf{RSC}}}}
\newcommand{\RSCNis}{{\operatorname{\mathbf{RSC}}}_{\Nis}}
\newcommand{\ul}[1]{{\underline{#1}}}
\newcommand{\Hom}{\operatorname{Hom}}
\newcommand{\Ker}{\operatorname{Ker}}
\renewcommand{\Im}{\operatorname{Im}}
\newcommand{\Tr}{\operatorname{Tr}}
\newcommand{\Nm}{\operatorname{Nm}}
\newcommand{\Div}{\operatorname{Div}}
\newcommand{\Spec}{\operatorname{Spec}}
\newcommand{\dlog}{\operatorname{dlog}}
\newcommand{\Sm}{\operatorname{\mathbf{Sm}}}
\newcommand{\red}{{\operatorname{red}}}
\newcommand{\Zar}{{\operatorname{Zar}}}
\newcommand{\Nis}{{\operatorname{Nis}}}
\newcommand{\et}{{\operatorname{\acute{e}t}}}
\newcommand{\inj}{\hookrightarrow}
\newcommand{\surj}{\rightarrow\!\!\!\!\!\rightarrow}
\newcommand{\Res}{\operatorname{Res}}
\newcommand{\id}{{\operatorname{id}}}
\newcommand{\CH}{{\operatorname{CH}}}
\newcommand{\Frac}{{\operatorname{Frac}}}
\newcommand{\e}{{\epsilon}}
\renewcommand{\b}{{\rm b}}
\renewcommand{\c}{{\rm c}}
\newcommand{\mc}{{\rm mc}}
\newcommand{\gen}{{\rm gen}}
\newcommand{\ol}{\overline}
\renewcommand{\epsilon}{\varepsilon}
\newcommand{\la}{\langle}
\newcommand{\ra}{\rangle}
\newcommand{\ulMCor}{\operatorname{\mathbf{\underline{M}Cor}}}
\def\Fh{F^h}
\def\sOh{\sO^h}
\def\tx{\tilde{x}}
\title[Higher local symbols of reciprocity sheaves]{Ramification theory of reciprocity sheaves, II\\Higher local symbols}
\author{Kay R\"ulling \and Shuji Saito}
\address{Bergische Universit\"at Wuppertal\\ Gau\ss str. 20, D-42119 Wuppertal, Germany}
\email{ruelling@uni-wuppertal.de}
\address{Graduate School of Mathematical Sciences, University of Tokyo, 3-8-1 Komaba, Tokyo 153-8941, Japan}
\email{sshuji@msb.biglobe.ne.jp}
\thanks{K.R.\ was supported by the DFG Heisenberg Grant RU 1412/2-2. 
S.S.\ is supported by the JSPS KAKENHI Grant (20H01791). }
\begin{document}

\begin{abstract}
We construct  a theory of higher local symbols along Par\v{s}in chains for reciprocity sheaves.  
Applying this formalism to differential forms, gives a new construction 
of the Par\v{s}in-Lomadze residue maps, and applying it to the torsion characters of the fundamental group gives back
the reciprocity map from Kato's higher local class field theory in the geometric case.  
The higher local symbols satisfy various reciprocity laws.
The main result of the  paper is a characterization of the modulus attached to a section of a reciprocity sheaf in terms of the 
higher local symbols. 
\end{abstract}

\maketitle

\tableofcontents

\def\Keta{K_\eta}

\section{Introduction}

In this note we apply the results from \cite{RS-ZNP} to obtain a theory of higher local symbols for 
reciprocity sheaves. These symbols are  higher dimensional generalizations of the local symbols defined by Rosenlicht-Serre \cite{Serre-GACC}
in the 1-dimensional case for commutative  algebraic groups. Higher local symbols are defined along Par\v{s}in chains and satisfy various reciprocity laws. Applying this formalism to differential forms, gives a new construction 
of the Par\v{s}in-Lomadze residue maps, and applying it to the torsion characters of the fundamental group gives back
the reciprocity map from Kato's higher local class field theory in the geometric case.  
The main result of the  paper is a characterization of the modulus attached to a section of a reciprocity sheaf in terms of the 
higher local symbols. This result will be an essential ingredient in  \cite{RS-AS} and \cite{SaitologRSC}.

\medskip
\begin{para}
We fix a perfect field $k$.
Reciprocity sheaves were introduced by Kahn, Saito, and Yamazaki in \cite{KSY2}. A reciprocity sheaf $F$ is a 
Nisnevich sheaf with transfers which has the additional property, that any section $a\in F(U)$ over a smooth 
$k$-scheme $U$ has a modulus, i.e., there is a proper $k$-scheme $X$ and 
an effective Cartier divisor $D$ on $X$, such that $U=X\setminus D$ and the pair $(X,D)$ measures the defect of $a$ 
being regular outside of  $U$. Though one should think of the modulus 
as a measure for the pole order or the depth of ramification of $a$ along $D$,
 the interest comes from the fact that it is defined in a motivic way,
namely by requiring  that the action of certain finite correspondences is zero on $a$, see \ref{para:RSC} 
and the references there for details. 
The subgroup of sections of $F(U)$ with modulus $(X,D)$ is denoted by $\tF(X,D)$. 
If $X$ is projective of dimension $d$ over $k$, then in  \cite[Proposition 6.7]{RS-ZNP} we  construct a pairing
\eq{intro:0}{(-,-)_{(X,D)_K/K}:\tF(X_K,D_K)\otimes_{\Z} H^d(X_{K,\Nis}, K^M_d(\sO_{X_K}, I_{D_K}))\to F(K),}
where $K$ is a function field over $k$, $X_K=X\otimes_k K$,  $K^M_d$ is Kerz' improved Milnor $K$-theory sheaf, and 
$K^M_d(\sO_{X_K}, I_{D_K})=\Ker(K^M_{d, X_K}\to K^M_{d, D_K})$.
For particular reciprocity sheaves  this gives back several pairings  which were constructed in the literature by
different methods, e.g., if $F=\Hom_{\rm cont}(\pi^{\rm ab}(-), \Q/\Z)$ and $K$ is a finite field, this pairing 
(or at least the pro-system over larger and larger $D$) was constructed in \cite{KS-GCFT}  
to obtain higher dimensional geometric class field theory, 
or if $k$ has characteristic zero and $F(U)$ denotes the absolute rank  one connections on $U$,
this pairing was constructed by Bloch-Esnault in the case $U$ has dimension 1, see \cite[(4.8)]{Bloch-Esnault}. 
A disadvantage of the motivic definition of $\tF(X,D)$ is that it is hard to decide which sections  of $F(U)$ have modulus 
$(X,D)$.
To study the pairing in other interesting examples, e.g., $F(U)=H^1(U_{\rm fppf}, G)$ for $G$ a finite $k$-group scheme,
or $F(U)=H^0(U, R^{n+1}\e_*\Q/\Z(n))$ with $\Q/\Z(n)$ the \'etale motivic complex of weight $n$ with $\Q/\Z$-coefficient and $\e: X_{\et}\to X_{\Nis}$ the change of sites map, it is desirable to get a better hold on $\tF(X,D)$. Easier-to-handle global descriptions of
$\tF(X,D)$ are given in  \cite{RS-ZNP} and \cite{RS-AS}.
In the present article we give a purely local description,
at least under certain extra assumptions on $(X,D)$. 
\end{para}

\begin{para}
Let $K$ be a function field over $k$.
Recall that a Par\v{s}in chain (or maximal chain) on an integral  finite-type $K$-scheme $X$ of dimension $d$
is a sequence $\ux=(x_0, \ldots, x_d)$ of points of $X$ with $x_i<x_{i+1}$, i.e., $x_i$ is a strict specialization of $x_{i+1}$, for all $i=0,\ldots, d-1$.
Let $F$ be a reciprocity sheaf.  For any maximal chain $\ux$ on $X$, we define in section \ref{sec:HLS} the higher local symbol
\eq{intro:1}{(-,-)_{X/K,\ux}: F(K^h_{X,\ux})\otimes_{\Z} K^M_d(K_{X,\ux}^h)\to F(K),}
where $K^h_{X,\ux}$ is the henselization of $\sO_{X,x_0}$ along the chain $\ux$, see \ref{para:hlr} for details. 
The definition of this pairing relies on the map
$c_{\ux}: K^M_d(K_{X,\ux}^h)\to H^d(X, j_!K^M_{d,U})$ already considered in \cite{KS-GCFT} and 
the pushforward $H^d(X_\Nis, j_!F\la d\ra_U)\to F(K)$ constructed in \cite{RS-ZNP} (and relying on the pushforward
constructed in \cite{BRS}). Using the natural map $K(X)\inj K^h_{X,\ux}$
\eqref{intro:1} also induces a semi-local pairing
\[(-,-)_{X/K,\ux}: F(K(X))\otimes_{\Z} K^M_d(K(X))\to F(K).\]
The family of these symbols (for all $X$ and all $\ux$) is uniquely determined by the properties
\ref{HS1}-\ref{HS4} which resemble the properties used by  Serre to characterize and construct 
his local symbols on curves for commutative algebraic groups in \cite[III]{Serre-GACC}. 
This uniqueness property can be used to check that the higher local symbols coincide for $F=\Omega^q$ with those defined by
Par\v{s}in and Lomadze (\cite{Parsin}, \cite{Lo}), for  details on this and further examples see \ref{exs-rmks:res}. 
The property \ref{HS3} roughly says that the symbol $(-,-)_{X/K,\ux}$ vanishes on 
\[\tF(X_K,D_K)\otimes K_d^M(\sO_{X_K}, I_{D_K})_{(x_0,\ldots, x_{d-1})}^h,\]
where $K_d^M(\sO_{X_K}, I_{D_K})_{(x_0,\ldots, x_{d-1})}^h$ is the Nisnevich stalk of $K_d^M(\sO_{X_K}, I_{D_K})$ at the chain $(x_0,\ldots, x_{d-1})$,
see \eqref{para:hls.Nisnevichstalk}.
The property \ref{HS4} is the reciprocity theorem
\[\sum_{x_{i-1}<y<x_{i+1}} (a, \beta)_{X/K,(x_0,\ldots, x_{i-1},y,x_{i+1}, \ldots, x_d)}=0, \]
for all  $a\in F(K(X))$, $\beta\in K^M_d(K(X))$, and $i\in \{0,\ldots, d\}$, where in the case $i=0$, we have to assume $X$ projective.
Interestingly,  in \cite{Lo} (and many similar constructions) property \ref{HS3} follows easily from the definition of the local symbol and 
the reciprocity law \ref{HS4} is a theorem, whose proof requires a more involved argument, 
whereas in our case \ref{HS4} is a formal consequence  of the construction 
and \ref{HS3} follows from the pairing \eqref{intro:0}, which is one of the main results from \cite{RS-ZNP}.
\end{para}

The  main result of the present paper is the following theorem (the statement in the body of the text is a bit stronger).

\begin{thm}[see Theorem \ref{thm:LS}, Proposition \ref{prop:diag}]\label{intro:thm}
Let $X$ be a smooth $k$-scheme of pure dimension $d$ and $D$ an effective Cartier divisor on $X$ whose support has simple normal crossings.
Let $U=X\setminus|D|$ and $a\in F(U)$.
Assume that there exists an open dense immersion $X\inj \Xb$ into a smooth and projective $k$-scheme, such that 
$(\Xb\setminus U)_{\rm red}$ has simple normal crossings. 
Let $V\subset X$  be an open neighborhood of the generic points of $|D|$.
Then the following conditions are equivalent:
\begin{enumerate}[label=(\roman*)]
\item\label{thm:intro1} $a\in \tF(X,D)$.
\item\label{thm:intro3} For any function field $K/k$ and any maximal chain $\ux=(x_0,x_1\ldots, x_d)$ on $V_K$ 
with  $x_{d-1}\in D_K$, we have
\[(a_K,\beta)_{X_K/K,\ux}=0,\quad \text{for all } \beta\in K^M_d(\sO_{X_K}, I_{D_K})_{x_{d-1}},\]
where $X_K=X\otimes_k K$ and $a_K\in F(U_K)$ denotes the  pullback of $a$.
\end{enumerate}
If furthermore $D$ is a {\em reduced} divisor with simple normal crossings, then the same is true without assuming the
existence of the smooth projective compactification $\Xb$ with SNCD boundary.
\end{thm}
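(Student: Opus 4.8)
The plan is to prove the two implications separately, the substance lying in \ref{thm:intro3} $\Rightarrow$ \ref{thm:intro1}. The implication \ref{thm:intro1} $\Rightarrow$ \ref{thm:intro3} should be immediate from property \ref{HS3}: if $a\in\tF(X,D)$, then its pullback $a_K$ lies in $\tF(X_K,D_K)$ by compatibility of the modulus with the base change $X_K\to X$, and since the group $K(\sO_{X_K},I_{D_K})_{x_{d-1}}$ featuring in \ref{thm:intro3} maps to the Nisnevich stalk of $K^M_d(\sO_{X_K},I_{D_K})$ at the chain $(x_0,\ldots,x_{d-1})$ on which \ref{HS3} applies, the symbol $(a_K,\beta)_{X_K/K,\ux}$ must vanish.

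For the converse I would first reduce the global modulus condition to a codimension-one condition. Using the simple normal crossings hypothesis together with the modulus formalism of \cite{RS-ZNP}, I would show that $a\in\tF(X,D)$ is equivalent to the purely local statement that for every generic point $\eta$ of $|D|$, with multiplicity $m_\eta$, the image of $a$ in the henselian local ring at $\eta$ has modulus bounded by $m_\eta[\eta]$. Only the behaviour of $a$ near these codimension-one points enters, which is exactly why it suffices to work over the neighbourhood $V$; and since the generic points of $D_K$, as $K/k$ ranges over all function fields, exhaust the codimension-one valuations needed to test this condition, it is enough to verify the local modulus condition after base change to every such $K$.

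The heart of the matter is then a local statement at a generic point $\eta$ of a component of $D_K$: the higher local symbol along a chain $\ux$ with $x_{d-1}=\eta$ detects precisely the local modulus of $a$ at $\eta$. Concretely I would first establish a compatibility identifying the higher local symbol with the local component of the global pairing \eqref{intro:0}, so that the vanishing in \ref{thm:intro3} translates into the vanishing of \eqref{intro:0} evaluated at $a_K$ against the classes $c_{\ux}(\beta)$; as $\ux$ varies over chains through $\eta$, these should generate the relevant part of the cohomology group $H^d(X_{K,\Nis},K^M_d(\sO_{X_K},I_{D_K}))$ appearing in \eqref{intro:0}. The key input is a non-degeneracy statement, argued by contraposition: were the local modulus of $a$ at $\eta$ strictly larger than $m_\eta$, its leading term in the graded quotient of the modulus filtration would pair non-trivially, via an iterated-residue computation, with some $\beta\in K(\sO_{X_K},I_{D_K})_{x_{d-1}}$, contradicting \ref{thm:intro3}. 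One reduces the $d$-dimensional symbol by successive residues to a one-dimensional Rosenlicht--Serre symbol in the normal direction at $\eta$, where the classical characterization of the modulus in \cite[III]{Serre-GACC} applies, while the structure theory of reciprocity sheaves identifies the graded pieces with spaces of differential forms, for which the residue pairing is perfect by duality.

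The hard part will be establishing this non-degeneracy uniformly for an arbitrary reciprocity sheaf $F$, and in particular for a non-reduced divisor $D$, where the graded pieces of the modulus filtration are no longer governed by the tame symbol alone; controlling them appears to require the global duality underlying \eqref{intro:0}. This is why the general case assumes a smooth projective compactification $\Xb$ with simple normal crossings boundary: it guarantees that the pushforward to $F(K)$ and the pairing \eqref{intro:0} are available and that the full reciprocity laws \ref{HS4}, including the case $i=0$, are at one's disposal. When $D$ is reduced the modulus condition is tame, the relevant graded piece is controlled by the ordinary tame symbol, and the characterization can be read off directly from the henselization at $\eta$; I would therefore treat the reduced case by a self-contained local computation requiring no compactification, and recover the general case from the compatibility with \eqref{intro:0}.
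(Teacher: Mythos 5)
Your direction for the two easy implications is right, but the core of your converse argument rests on a step that is not available. You propose to prove \ref{thm:intro3}~$\Rightarrow$~\ref{thm:intro1} by a non-degeneracy statement: if the local modulus of $a$ at a generic point $\eta$ of $D$ exceeded $m_\eta$, its ``leading term in the graded quotient of the modulus filtration'' would pair non-trivially with some $\beta$, the graded pieces being ``identified with spaces of differential forms, for which the residue pairing is perfect by duality.'' No such structure theory exists for an arbitrary $F\in\RSC_\Nis$: the graded pieces of the modulus filtration of a general reciprocity sheaf are not known to be differential forms, and no perfect residue duality on them is established (nor needed). The paper avoids any non-degeneracy claim. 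It takes the global characterization $\tF(X,D)=F_{\gen}(X,D)=\{a\mid (a_K,\gamma)=0\ \forall\gamma\in R(X_K|D_K)\}$ from Theorem~\ref{thm:mod-pairing}\ref{thm:mod-pairing2} as the input on the reciprocity-sheaf side, and reduces everything to a purely $K$-theoretic \emph{generation} statement (Claim~\ref{claim;thm;HLS0}): the classes $c_{\ux}(\beta)$, for $\beta$ in the Zariski stalks $(V_{d,X_K|D_K})_{x_{d-1}}$ and $\ux$ running through chains based at $W_K$, surject onto the kernel of $H^d(\Xb_{K,\Nis},V_{d,\Xb_K|\Db+nY+B})\to H^d(\Xb_{K,\Nis},V_{d,\Xb_K|\Db+B})$. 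That surjectivity is proved with the Lichtenbaum--Grothendieck--Kleiman vanishing $H^d((Y\setminus W)_\Zar,\sF)=0$ and the Zariski analogue of the Kato--Saito chain description of top cohomology --- tools you do not invoke and for which your duality heuristic is not a substitute. (Relatedly, the smooth compactification with SNCD boundary is needed so that Theorem~\ref{thm:mod-pairing}\ref{thm:mod-pairing2} applies to $(\Xb_K,\Db+nY+B)$, not merely so that the pushforward and \ref{HS4} exist --- those only require a projective, not smooth, compactification.)

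For the reduced case your sketch is also missing the key idea. You assert the characterization ``can be read off directly from the henselization at $\eta$'' because the graded piece is tame, but the actual difficulty is to convert the vanishing of symbols over all function fields $K/k$ into the statement that $a$ extends across $D$, without a good compactification. The paper does this via the exact sequence of \cite{SaitologRSC}, an Elkik-type retraction reducing to $X=\A^1_D$, purity results from \cite{S-purity}, and above all the diagonal argument of Lemma~\ref{lem:diag}: an explicit computation of the symbol of $\bigl\{\tfrac{t-s}{t-1},\theta_1,\ldots,\theta_{d-1}\bigr\}$ along a chain through the diagonal, which exhibits $(a_L,\beta_0)_{X_L/L,\ux}$ as $j^*(a-\lambda^*a)$ and hence detects whether $a$ descends. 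None of this is routine, and your proposal contains no replacement for it.
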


If $F$ has level $\le 3$ (see \ref{para:level}) one can also get around the assumption on the existence of the smooth compactification with SNCD boundary,
see Corollary \ref{cor:HLS}.

The proof of Theorem \ref{intro:thm} uses the main results from \cite{RS-ZNP}. 
The stronger statement for  $D$ reduced relies on \cite[Corollary 2.5]{SaitologRSC}, \cite{S-purity},
 and an additional diagonal argument explained in section \ref{sec:diag}.
Theorem \ref{intro:thm} and the properties \ref{HS1}-\ref{HS5} of the higher local symbols  play an important role in the proofs of the main result of
\cite{RS-AS} and  in the proof of \cite[Theorem 4.2]{SaitologRSC}.

\begin{acknowledgement}
The authors thank the referee for his comments which helped to clarify the exposition.
\end{acknowledgement}

\begin{nota}\label{nota}
\begin{enumerate}
\item In this paper $k$ denotes a field and $\Sm$ the category of separated schemes 
which are smooth and of finite type over $k$. For $k$-schemes $X$ and $Y$ we write $X\times Y:= X\times_k Y$.
For $n\ge 0$ we write $\P^n:=\P^n_k$, $\A^n:=\A^n_k$.
\item Let $F$ be a Nisnevich sheaf on a scheme $X$ and $x\in X$ a point.
Then we denote by $F_x$ its Zariski stalk and by $F_x^h=\varinjlim_{x\in U/X}F(U)$ the Nisnevich stalk, where the limit is over 
all Nisnevich neighborhoods $U\to X$ of $x$.

\item For a reduced ring $R$, ${\rm Frac}(R)$ denotes its total ring of fractions. 

\item For a scheme $X$ we denote by $X_{(i)}$ (resp. $X^{(i)}$)  the set of $i$-dimensional (resp. $i$-codimensional) points of $X$.

\end{enumerate}
\end{nota}

\section{Preliminaries on reciprocity sheaves and pairings}
This paper builds on \cite{RS-ZNP}. In this section we recall  some notations and results, see
{\em loc. cit.} and the references there for more details.

In this section $k$ is a perfect field.
\begin{para}\label{para:RSC}
A {\em modulus pair} $(X,D)$ in the sense of \cite{KMSY1}, \cite{KMSY2} consists of a separated scheme of finite type over $k$ and 
an effective (possibly empty) Cartier divisor $D$ on $X$, such that the complement $U=X\setminus D$ is smooth over $k$. 
The modulus pair $(X,D)$ is called {\em proper}, if $X$ is proper over $k$.
Let $F$ be a presheaf with transfers and $U\in\Sm$. A {\em modulus for an element $a\in F(U)$} is  a proper modulus pair $(X,D)$ with $U=X\setminus D$,
such that  for all $S\in \Sm$ and all integral closed subschemes $Z\subset \A^1\times S\times U$,
which are finite and surjective over a connected component of $\A^1_S$ and such that the normalization $\tilde{Z}$ of the closure of $Z$ in $\P^1\times S\times X$ 
satisfies $\infty_{S|\tilde{Z}}\ge D_{|\tilde{Z}}$, we have 
\[[Z_0]^*a=[Z_1]^*a,\]
where $[Z_\epsilon]$ denotes the finite correspondence from $S$ to $U$ associated to $Z\cap (\{\epsilon\}\times S)$, $\epsilon\in \{0,1\}\subset \A^1$.

A {\em reciprocity sheaf} in the sense of \cite{KSY2} is a presheaf with transfers $F$ which is a Nisnevich sheaf on $\Sm$ and 
for which any section $a\in F(U)$ has a modulus $(X,D)$. We denote by $\RSC_{\Nis}$ the category of reciprocity sheaves. 
For a proper modulus pair $(X,D)$ with $X\setminus D=U$ we set 
\[\tF(X,D)=\{a\in F(U)\mid (X,D) \text{ is a modulus for }a\}.\]
If $(X,D)$ is not proper we set 
\[\tF(X,D)=\varinjlim_{(Y,E)} \tF(Y,E),\]
where the limit is over the cofiltered ordered set of compactifications $(Y,E)$ of $(X,D)$, see \cite[1.8]{KMSY1}.
We also regularly work with pairs $(X,D)$, which are equal to a projective  limit $\varprojlim_{i\in I} (X_i, D_i)$ with $(X_i,D_i)$ modulus pairs and $I$ some filtered set
(e.g.,  $X$ is of finite type over a function field $K/k$ , $D$ is an effective Cartier divisor on $X$ and $U=X\setminus D$ is regular). 
In this case we set 
\[\tF(X,D)= \varinjlim_{i\in I} \tF(X_i,D_i).\]

\end{para}

\begin{para}\label{para:pairing}
Let $F\in \RSC_\Nis$. 
Let  $K$ be a function field  over $k$ and $U$ a regular quasi-projective $K$-scheme of dimension $d$.
Choose a factorization
\[U\xr{j} X\xr{\ol{f}} \Spec K\]
 of the structure map $U\to \Spec K$ with $X$ reduced, $j$ open dense, and $\ol{f}$ projective.
Building on the results from \cite{BRS}, we define in \cite[4.]{RS-ZNP} for such a factorization, a pushforward map
\[(\ol{f}, j)_*: H^d(X_\Nis, j_!(F\la d\ra_{U}))\to F(K),\]
where ``$j_!$'' denotes the extension-by-zero functor.  
Here $F\la d\ra\in \RSC_\Nis$ is the $d$th twist of $F$ introduced in \cite[5.5]{RSY} and $F\la d\ra_U$ denotes its restriction to $U_\Nis$.
There is a natural map of Nisnevich sheaves 
$F_U\otimes_\Z K^M_{d,U}\to  F\la d\ra_U$ on $U$, where $K^M_d$ denotes the Nisnevich sheafification of 
the improved Milnor $K$-theory from \cite{Kerz},
which  induces a morphism on $X_{\Nis}$
\eq{para:pairing1}{j_*F_U\otimes_\Z j_!K^M_{d,U}\to  j_!F\la d\ra_U.}
This yields the pairing
\ml{para:pairing2}{(-,-)_{U\subset X/K}: F(U)\otimes_{\Z} H^d(X_{\Nis}, j_!K^M_{d,U})\xr{\cup} H^d(X_{\Nis}, j_*F_U\otimes_{\Z} j_!K^M_{d,U})\\
\xr{\eqref{para:pairing1}} H^d(X_{\Nis}, j_!F\la d\ra_U)\xr{(\ol{f}, j)_*} F(K). }
It is a factorization of the usual pairing induced by finite correspondences from $\Spec K$ to $U$ in the
following sense: If $x\in U$ is a closed point there is a natural isomorphism
\eq{para:pairing2.5}{\theta_x:\Z\xr{\simeq} H^d_x(U_{\Nis}, K^M_{d,U}),}
induced by the Gersten resolution (see \cite[Proposition 10, (8)]{Kerz}). 
Composing with the natural map $H^d_x(U_{\Nis}, K^M_{d,U})\to H^d(X_{\Nis}, j_!K^M_{d,U})$ 
and taking the  sum over all closed points $x$ yields the map
\eq{para:pairing2.6}{ Z_0(U)=\bigoplus_{x\in U_{(0)}}\Z \to H^d(X_{\Nis}, j_!K^M_{d,U}).}
By \cite[Lemma 6.6]{RS-ZNP} we have  for all $a\in F(U)$ and $\zeta\in Z_0(U)$
\eq{para:pairing3}{(a, [\zeta])_{U\subset X/K}= \zeta^*a,}
where $[\zeta]$ on the  left denotes the image of $\zeta$ under \eqref{para:pairing2.6} and  
on the right we view $\zeta$ as a finite correspondence from $\Spec K$ to $U$.
\end{para}

\begin{para}\label{para:relK}
Let $X$ be a reduced noetherian excellent separated scheme of dimension $d<\infty$ over a field, 
such that $X^{(d)}=X_{(0)}$. Let $D\subset X$ be a nowhere dense closed subscheme. 
We define for $r\ge 1$
\[V_{r,X|D}=\Im(\sO^\times_{X|D}\otimes_{\Z} K^M_{r-1,X}\to K^M_{r, X}), \quad 
\text{where } \sO_{X|D}^\times=\Ker(\sO_X^\times \to \sO_D^\times).\]
This sheaf is very close to the relative Milnor $K$-sheaf $K^M_r(\sO_X, I_D)$ defined in \cite[(1.3)]{KS-GCFT}, where $I_D$ denotes the ideal sheaf of $D$.
In fact the two sheaves agree for $r=1$ and they have the same  stalks at all points with infinite residue field.
Since by Grothendieck-Nisnevich vanishing the cohomological dimension of the Nisnevich cohomology on a noetherian scheme is bounded by its dimension, see \cite[(1.2.5)]{KS-GCFT} or \cite[1.32 Theorem]{Nis}, we find that  the natural inclusion $V_{d, X|D}\inj K^M_d(\sO_X, I_D)$ induces an isomorphism
\eq{para:mod-pairing0.5}{ H^d(X_{\Nis}, V_{d, X|D})\xr{\simeq} H^d(X_{\Nis}, K^M_{d}(\sO_X,I_D)).}
For any regular dense open $j':U'\inj X$ contained in $X\setminus D$ the composition 
\eq{para:mod-pairing0.1}{
Z_0(U') \xr{\eqref{para:pairing2.6}} H^d(X_{\Nis}, j'_!K^M_{d,U'})\to H^d(X_{\Nis}, V_{d, X|D})} 
is therefore surjective by \cite[Theorem 2.5]{KS-GCFT}.
\footnote{Note that  as long as we work over a field the ``nice schemes'' in \cite[Theorem 2.5]{KS-GCFT} 
can be replaced by  excellent regular schemes, as follows from \cite[Proposition 10]{Kerz} 
and the comment below \cite[Corollary 2.4]{KS-GCFT}.}
\end{para}

\begin{para}\label{para:mod-pairing}
We recall the main result from \cite{RS-ZNP}. Let $F\in \RSC_{\Nis}$.
Let $K$ be a function field over $k$. Let $X$ be an integral projective $K$-scheme  of dimension $d$ 
and $j:U\inj X$  a regular dense open subscheme. 
Let $D\subset X$ be a closed subscheme (not necessarily a divisor) such that $D_{\red}=X\setminus U$.
Let $\nu: Y\to X$ be the normalization of $X$.
We define 
\eq{para:mod-pairing0}{F_{\gen}(X,D):=\Ker\left(F(U)\to \bigoplus_{y\in Y^{(1)}\cap \nu^{-1}(D)}\frac{F(Y_{y}^h\setminus y)}{\tF(Y_y^h, D_{y}^h)}\right),}
where $Y_y^h=\Spec \sO_{Y,y}^h$ and $D_{y}^h= D\times_X Y^h_y$.
We define $R(X|D)$ by the  exact sequence
\[0\to R(X|D)\to H^d(X_{\Nis}, j_!K^M_{d,U})\to H^d(X_{\Nis}, V_{d,X|D})\to 0.\]
\begin{thm}[{\cite[Proposition 6.7, Theorem 6.8]{RS-ZNP}}]\label{thm:mod-pairing}
Assumptions as in  \ref{para:mod-pairing}.
\begin{enumerate}[label=(\arabic*)]
\item\label{thm:mod-pairing1}  The pairing \eqref{para:pairing2} induces  a pairing
\[ (-,-)_{(X,D)/K}:F_{\gen}(X,D)\otimes_{\Z} H^d(X_{\Nis}, V_{d, X|D})\to F(K).\]
\item\label{thm:mod-pairing2}  Assume $X\in \Sm$ is projective over $k$ and $D$ is an effective Cartier divisor with simple normal crossing support.
Then for $a\in F(U)$ with $U=X\setminus D$ we have
\[\tF(X,D)=F_{\gen}(X,D)=
\{a\in F(U)\mid (a_K,\gamma)_{U_K\subset X_K/K}=0\; \forall K/k, \gamma\in R(X_K|D_K)\},\]
where in the set on the right, $K$ runs over all function fields over $k$, $X_K=X\otimes_k K$, and $a_K$ is the pullback of $a$ to $F(U_K)$.
\end{enumerate}
\end{thm}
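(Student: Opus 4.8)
The plan is to establish part~\ref{thm:mod-pairing1} first and then to deduce part~\ref{thm:mod-pairing2} from it together with one localization and one base-change statement. For part~\ref{thm:mod-pairing1}, I would begin by recognizing the surjection defining $R(X|D)$ as the map on $H^d$ induced by the counit $j_!K^M_{d,U}=j_!j^*V_{d,X|D}\to V_{d,X|D}$ of the $(j_!,j^*)$-adjunction, which is available because $j^*V_{d,X|D}=K^M_{d,U}$ (the relative condition is vacuous away from $D$). Since $(-,-)_{U\subset X/K}$ from \eqref{para:pairing2} is the composite of the cup product, the map \eqref{para:pairing1}, and the pushforward $(\ol{f},j)_*$, proving that it descends to a pairing on $H^d(X_\Nis,V_{d,X|D})$ is the same as proving that it vanishes on $F_\gen(X,D)\otimes R(X|D)$.

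For this vanishing I would describe $R(X|D)$ cycle-theoretically. Using the Gersten resolution of improved Milnor $K$-theory and the niveau spectral sequence, $H^d(X_\Nis,j_!K^M_{d,U})$ is a quotient of $Z_0(U)$ via \eqref{para:pairing2.6}, and a class lies in $R(X|D)$ exactly when it dies in the relative group $H^d(X_\Nis,V_{d,X|D})\cong H^d(X_\Nis,K^M_d(\sO_X,I_D))$ of \eqref{para:mod-pairing0.5}; concretely $R(X|D)$ is generated by the images of divisors $\divi_C(g)$ along integral curves $C$ meeting $U$ (with normalization mapping to $Y$) of functions $g$ that are $\equiv 1$ modulo $D$ along $C$. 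On such a generator the pairing $(a,\divi_C(g))_{U\subset X/K}$ is governed, by \eqref{para:pairing3} and functoriality for the normalization of $C$, by the $1$-dimensional Rosenlicht--Serre local symbol of $a$ and $g$: the reciprocity law for that symbol says its contributions over all points of the proper normalized curve sum to zero, while the contributions at the points of $C\cap D$ vanish because $a\in F_\gen(X,D)$ forces $a$ to have modulus $D$ there and $g\equiv 1 \bmod D$; hence the contribution over $C\cap U$, which is the pairing, vanishes. This is the classical mechanism of \cite[III]{Serre-GACC}, applied to the local symbol for reciprocity sheaves.

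For part~\ref{thm:mod-pairing2} I would prove the cycle of inclusions $\tF(X,D)\subseteq F_\gen(X,D)\subseteq\{a\mid (a_K,\gamma)=0\ \forall K,\gamma\}\subseteq \tF(X,D)$, so that all three coincide. The first inclusion is the localization of the modulus: if $(X,D)$ is a modulus for $a$, then its pullback to the henselian local scheme $\Spec\sO_{X,y}^h$ at each codimension-$1$ point $y$ of $D$ is a modulus for the restriction of $a$, a stability property of the modulus under pullback. The second inclusion is part~\ref{thm:mod-pairing1} combined with the base-change stability $a\in F_\gen(X,D)\Rightarrow a_K\in F_\gen(X_K,D_K)$ for every function field $K/k$; the latter holds because $X_K\to X$ is a cofiltered limit of smooth morphisms, along which both $F$ (by continuity) and the codimension-$1$ modulus conditions are preserved, and then part~\ref{thm:mod-pairing1} applied over $K$ gives the vanishing of $(-,-)_{U_K\subset X_K/K}$ on $F_\gen(X_K,D_K)\otimes R(X_K|D_K)$.

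The third inclusion is the crux, and it is where the hypotheses that $X$ is smooth and $D$ has simple normal crossings are used. Given that all the pairings vanish, I would verify the defining condition $[Z_0]^*a=[Z_1]^*a$ of the modulus. First I would reduce the quantification over all $S\in\Sm$ and all admissible $Z$ to the case where $S=\Spec K$ is the spectrum of a function field and $Z$ is a curve, using continuity of $F$ and a spreading-out argument checking the equality at generic points. For such a test curve, the $0$-cycle $Z_0-Z_1$ on $U_K$ is the divisor of the rational function realizing the rational equivalence, and the admissibility condition $\infty_{S|\tilde Z}\ge D_{|\tilde Z}$ translates precisely into that function being $\equiv 1$ modulo $D_K$; hence the class of $Z_0-Z_1$ in $H^d(X_{K,\Nis},j_!K^M_{d,U_K})$ dies in the relative group and so lies in $R(X_K|D_K)$, where the good behaviour of the relative Milnor $K$-sheaf and the surjectivity \cite[Theorem 2.5]{KS-GCFT} (which require the smooth, simple-normal-crossing hypotheses) are invoked. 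The hypothesis then gives $(a_K,[Z_0-Z_1])_{U_K\subset X_K/K}=0$, and by \eqref{para:pairing3} this equals $[Z_0]^*a-[Z_1]^*a$, proving $a\in\tF(X,D)$. I expect the main obstacle to be exactly this last step: pinning down the dictionary between admissible correspondences and classes in $R(X_K|D_K)$ — matching the motivic condition $\infty\ge D$ with membership in the relative Milnor $K$-theory — and carrying out the reduction to function-field test curves so that \eqref{para:pairing3} applies verbatim; the curve reduction and $1$-dimensional reciprocity underlying part~\ref{thm:mod-pairing1} are the secondary difficulty.
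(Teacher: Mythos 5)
First, a structural remark: this theorem is not proved in the paper at all --- it is imported verbatim from \cite[Proposition 6.7, Theorem 6.8]{RS-ZNP} --- so there is no internal proof to compare against; your proposal can only be judged on its own terms, and on those terms it has a genuine gap, located exactly at the heart of part (1). The group $F_{\gen}(X,D)$ is defined in \eqref{para:mod-pairing0} purely by henselian conditions at the \emph{codimension-one} points of the normalization lying over $D$; it constrains $a$ only generically along $D$. Your key step --- ``the contributions at the points of $C\cap D$ vanish because $a\in F_{\gen}(X,D)$ forces $a$ to have modulus $D$ there'' --- requires $a|_{\widetilde{C}}\in \tF(\widetilde{C}, D|_{\widetilde{C}})$ for every curve $C$ realizing a relation $\divi_C(g)$, and such curves will in general meet $D$ at points of codimension $\ge 2$ in $X$ (and, in the generality of \ref{para:mod-pairing}, where $X$ need not be smooth nor $D$ an SNCD, at singular points), where the definition of $F_{\gen}$ gives no information whatsoever. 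Knowing that the generic, codimension-one modulus condition propagates to arbitrary test curves is precisely the purity statement $F_{\gen}(X,D)=\tF(X,D)$, i.e., the hardest equality in part (2) --- which in your scheme is \emph{deduced from} part (1). So the argument is circular. Your curve mechanism is fine for $a\in\tF(X,D)$ (pullback stability of the modulus along $\widetilde{C}\to X$ plus the one-dimensional reciprocity of \cite[Proposition 5.2.1]{KSY1}), but then your chain of inclusions in part (2) only closes up to $\tF(X,D)\subseteq\{\ldots\}\subseteq\tF(X,D)$ and the equality with $F_{\gen}(X,D)$ is lost. Any correct proof of (1) must work directly with the codimension-one henselian data defining $F_{\gen}$, for instance by exhibiting $R(X|D)$ as the image of $H^{d-1}(X_\Nis, Q)$ with $Q:=V_{d,X|D}/j_!K^M_{d,U}$ a sheaf supported on $D$, and devissaging via Par\v{s}in chains to classes concentrated at the generic points of $D$ --- not via global curves, which see too much of $D$.

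A secondary gap: your ``concrete'' description of $R(X|D)$ as generated by classes of $\divi_C(g)$ with $g\equiv 1 \bmod D$ silently assumes that the cycle map $Z_0(U)\to H^d(X_{\Nis}, j_!K^M_{d,U})$ is \emph{surjective}; the paper only records surjectivity of the composite onto $H^d(X_{\Nis}, V_{d,X|D})$ in \eqref{para:mod-pairing0.1}, via \cite[Theorem 2.5]{KS-GCFT}, and surjectivity onto the larger group $H^d(X_{\Nis}, j_!K^M_{d,U})$ is not available in the paper and would need a separate argument (or be circumvented by the $H^{d-1}(X_\Nis,Q)$ description above). By contrast, the rest of your part (2) is sound in outline: the reduction of the modulus test from general $S\in\Sm$ to function-field points using the injectivity $F(S)\inj F(k(S))$ from \cite{S-purity}, the translation of admissibility $\infty_{S|\tilde{Z}}\ge D_{|\tilde{Z}}$ into $g\equiv 1 \bmod D_{|\tilde Z}$ for the function $g$ cutting out $Z_0-Z_1$, and the conclusion via \eqref{para:pairing3} are exactly the right moves for the inclusion $\{a\mid (a_K,\gamma)=0\}\subseteq \tF(X,D)$; but the theorem as a whole does not follow, because the linchpin --- part (1) for $F_{\gen}$ rather than for $\tF$ --- is unproven.
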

In this paper we give a purely local description of the right hand side in \ref{thm:mod-pairing2},
using Par\v{s}in chains and higher local rings.
\end{para}

\section{Recollections on {Par\v{s}in} chains, higher local rings, and cohomology}
We recall some definitions  and results from \cite[(1.6)]{KS-GCFT} (see also \cite[5.]{RS-ZNP}).
In this section,  $X$ is a reduced noetherian separated scheme of dimension $d<\infty$, such that $X^{(d)}=X_{(0)}$.

\begin{para}\label{para:chain}
For $x, y\in X$ we write
\[y<x:\Longleftrightarrow \ol{\{y\}}\subsetneq \ol{\{x\}}, \text{ i.e., } y\in \ol{\{x\}} \text{ and } y\neq x.\]
A {\em chain } on $X$ is a sequence
\eq{para:chain1}{\ux=(x_0,\ldots, x_n)\quad \text{with } x_0<x_1<\ldots <x_n.}
The chain $\ux$ is a {\em maximal Par\v{s}in chain} (or  {\em maximal chain}) if 
$n=d$ and $x_i\in X_{(i)}$. Note that the assumptions on $X$ imply $x_{i}\in \ol{\{x_{i+1}\}}^{(1)}$.
We denote 
\[\c(X)=\{\text{chains on } X\}\quad \text{and} \quad \mc(X)=\{\text{maximal chains on } X\}.\]

A {\em maximal chain with break at $r\in\{0,\ldots, d\}$} is a chain \eqref{para:chain1}
with $n=d-1$ and $x_i\in X_{(i)}$, for $i<r$, and $x_i\in X_{(i+1)}$, for $i\ge r$.
We denote
\[\mc_r(X)=\{\text{maximal chain with break at $r$ on $X$}\}.\]
For $\ux=(x_0,\ldots, x_{d-1})\in \mc_r(X)$, we denote by $\b(\ux)$ the set of $y\in X_{(r)}$ such that
\eq{para:chain2}{\ux(y):=(x_0,\ldots, x_{r-1}, y, x_{r},\ldots, x_{d-1})\in \mc(X).}
\end{para}

\begin{para}\label{para:hlr}
Let $S\subset X$ be a finite subset contained in an affine open neighborhood of $X$.
A {\em strict Nisnevich neighborhood of $S$} is an \'etale map $u:U\to X $ such that $U$ is affine,  
the base change $u^{-1}(S)\to S$ of $u$ is an isomorphism, and every connected component of $U$ intersects 
$u^{-1}(S)$. 

Let $\ux=(x_0,\ldots, x_n)$ be a chain on $X$. A {\em strict Nisnevich neighborhood of $\ux$} 
is a sequence of maps
\[\sU=(U_n\to\ldots \to U_1\to U_0 \to X=:U_{-1}),\]
such that $U_i\to U_{i-1}$ is a strict Nisnevich neighborhood of 
$U_{i-1, x_i}= U_{i-1}\times_X \{x_i\}$, for all $i=0,\ldots, n$.
There is an obvious notion of morphism between two strict Nisnevich neighborhoods 
and picking a representative in each isomorphism class yields a filtered set
\[N(\ux):=\{\text{strict Nisnevich neighborhoods of }\ux\}.\]
Assume $\ux\in \mc_r(X)$ and $y\in \b(\ux)$. If $\sU$ is a strict Nisnevich neighborhood as above, then repeating $U_{r-1}$ in the $r$th spot
yields a map of filtered sets
\eq{para:hlr1}{N(\ux)\to N(\ux(y)).}

The Nisnevich stalk of a presheaf $F$ on $X_{\Nis}$ at $\ux\in \c(X)$ is defined to be 
\eq{para:hls.Nisnevichstalk}{
\Fh_{\ux}:=\varinjlim_{\sU=(U_n\to\ldots\to X)\in N(\ux)} F(U_n).}
Note that for $\ux\in \mc_r(X)$ and $y\in \b(\ux)$ the map \eqref{para:hlr1}
induces a natural map
\eq{para:hlr2}{\iota_y : \Fh_{\ux}\to \Fh_{\ux(y)}.}
For $F=\sO_X$ we write $\sOh_{X,\ul{x}}=\Fh_{\ux}$ and $K^h_{X,\ux}=\Frac(\sOh_{X,\ux})$.
By \cite[Lemma 5.3]{RS-ZNP} $\sOh_{X,\ux}=R_n$, where we  recursively define:
\[R_0=\sO_{X,x_0}^h \quad \text{and}\quad  \quad R_{i}=\prod_{\fp\in T_i} R_{i-1,\fp}^h,\quad i\ge 1,\]
where $T_i= \Spec R_{i-1}\times_X \{x_i\}$
is the finite set of prime ideals in $R_{i-1}$ lying over the prime ideal  in $\sO_{X,x_0}$ corresponding to $x_i$;
this is also the definition used in \cite{KS-GCFT}.
\end{para}

\begin{lem}\label{lem:hlr}
Let $\ux=(x_0,\ldots, x_n)\in\c(X)$.
Let $Y=\ol{\{x_n\}}$ and set $\uy=\ux$ viewed as a chain on $Y$. Then 
$\sOh_{X,\ux}/\fr = \sOh_{Y,\uy}$, where $\fr$ denotes the radical of $\sOh_{X,\ux}$. 
\end{lem}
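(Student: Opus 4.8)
The plan is to exploit the closed immersion $i\colon Y\hookrightarrow X$ and reduce the statement to two facts: that the recursive construction of \ref{para:hlr} is compatible with $i$, and that $x_n$ is the generic point of $Y$. Write $I\subset\sO_X$ for the ideal sheaf of $Y=\ol{\{x_n\}}$. Since each $x_j$ with $0\le j\le n$ is a specialization of $x_n$, all the $x_j$ lie on $Y$, the stalk $I_{x_0}$ is the prime $\fp_{x_n}\subset\sO_{X,x_0}$ corresponding to $x_n$, and $\sO_{Y,x_j}=\sO_{X,x_j}/I_{x_j}$. Denote by $R_i$ (resp. $R_i^Y$) the rings produced by the recursion for $\ux$ on $X$ (resp. for $\uy$ on $Y$), so that $\sOh_{X,\ux}=R_n$ and $\sOh_{Y,\uy}=R_n^Y$. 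The heart of the argument is to produce a natural surjection $R_n\twoheadrightarrow R_n^Y$ with kernel $IR_n$, and then to show $IR_n=\fr$.

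For the surjection I would argue by induction on $i$ that $i$ induces $R_i/IR_i\xrightarrow{\sim}R_i^Y$. The base case $i=0$ is the standard fact that henselization commutes with quotients, $\sO_{X,x_0}^h/I_{x_0}\sO_{X,x_0}^h=(\sO_{X,x_0}/I_{x_0})^h=\sO_{Y,x_0}^h$. For the inductive step the crucial simplification is that $x_i\le x_n$ forces $\fp_{x_i}\supseteq\fp_{x_n}=I_{x_0}$, so every prime of $R_{i-1}$ lying over $x_i$ contains $IR_{i-1}$; hence the reduction map $R_{i-1}\to R_{i-1}^Y=R_{i-1}/IR_{i-1}$ identifies $T_i$ bijectively with $T_i^Y$ and no factor of the product is lost. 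Since localization and henselization both commute with the quotient by $I$, for matching primes $\fp\leftrightarrow\bar\fp$ one gets $R_{i-1,\fp}^h/I=\bigl((R_{i-1}^Y)_{\bar\fp}\bigr)^h$, and taking the product over $T_i=T_i^Y$ yields $R_i/IR_i=R_i^Y$.

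It remains to identify $IR_n$ with the radical $\fr$. Here I would use that $x_n$ is the generic point of $Y$: at the last stage $T_n^Y$ consists of the minimal primes of $R_{n-1}^Y$, so each factor $(R_{n-1}^Y)_{\bar\fp}^h$ is the henselization of a $0$-dimensional local ring, i.e. a field $\kappa(\fp)$; thus $\sOh_{Y,\uy}=R_n^Y=\prod_{\fp\in T_n}\kappa(\fp)$ is a finite product of fields. On the other hand $\sOh_{X,\ux}=R_n=\prod_{\fp\in T_n}R_{n-1,\fp}^h$ is a finite product of henselian local rings, so its radical is $\fr=\prod_\fp\fm_\fp$ and $R_n/\fr=\prod_\fp\kappa(\fp)$. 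As every $\fp\in T_n$ lies over $x_n$, the ideal $I$ maps into each $\fm_\fp$, giving $IR_n\subseteq\fr$; and since $R_n/IR_n=R_n^Y$ is already reduced (a product of fields), we must also have $\fr\subseteq IR_n$. Hence $IR_n=\fr$ and $\sOh_{X,\ux}/\fr=R_n/IR_n=R_n^Y=\sOh_{Y,\uy}$.

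The main obstacle is the inductive step of the second paragraph: one must check carefully that forming $R_i$ (localize at the finitely many primes over $x_i$, henselize, then take the product) is compatible with the quotient by $I$, i.e. that both localization and henselization commute with this quotient and that the indexing sets $T_i$ and $T_i^Y$ match. The bookkeeping becomes clean precisely because all chain points are specializations of $x_n$, so every relevant prime contains $I$ and no component is dropped. A secondary point requiring care is the reducedness of $R_{n-1}^Y$ invoked in the third paragraph to conclude that the localizations at its minimal primes are fields; this rests on the henselizations of the (excellent, reduced) local rings of the integral scheme $Y$ being reduced, which holds in the geometric setting of the paper.
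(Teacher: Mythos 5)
Your argument is correct, but it follows a genuinely different route from the paper's one-line proof. The paper works with the primary definition of $\sOh_{X,\ux}$ as the colimit over strict Nisnevich neighborhoods $\sU\in N(\ux)$: base change along the closed immersion $Y\inj X$ sends $\sU$ to a strict Nisnevich neighborhood of $\uy$, and by \cite[Proposition (18.6.8)]{EGAIV4} the neighborhoods of the form $\sU\times_X Y$ are cofinal in $N(\uy)$, whence $\sOh_{Y,\uy}=\sOh_{X,\ux}\otimes_{\sO_X}\sO_Y$ in one stroke (the identification of $I\sOh_{X,\ux}$ with $\fr$ is left implicit there). You instead induct through the recursive tower $R_0,\ldots,R_n$, which presupposes the identification $\sOh_{X,\ux}=R_n$ of \cite[Lemma 5.3]{RS-ZNP} but replaces the lifting result from EGA by the elementary observations that henselization commutes with quotients and that every prime over $x_i$ contains $I$ because $x_i\in\ol{\{x_n\}}$; what this buys is completely explicit bookkeeping of the factors and, in your third paragraph, a written-out justification of the step $I\sOh_{X,\ux}=\fr$ that the paper glosses over. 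Two small points of care: $\fr$ must be read as the Jacobson radical (as your formula $\fr=\prod_\fp\fm_\fp$ indicates) rather than the nilradical --- $R_n$ is already reduced but is not a product of fields unless $x_n$ is a generic point of $X$ --- so the correct justification for $\fr\subseteq IR_n$ is that $R_n/IR_n$ has zero Jacobson radical, being a product of fields, not merely that it is reduced; and the reducedness of $R_{n-1}^Y$, needed to see that its localizations at minimal primes are fields, is where the excellence hypothesis on $X$ enters, as you note at the end.
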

\begin{proof}
If $\sU\in N(\ux)$, then 
$\sU\times_X Y\in N(\uy)$ and it follows from \cite[Proposition (18.6.8)]{EGAIV4},
that Nisnevich neighborhoods of the form $\sU\times_X Y$ are cofinal in $N(\uy)$. 
\end{proof}

\begin{para}\label{para:hlc}
Let $F$ be an abelian Nisnevich sheaf on $X$ and  $\ux=(x_0,\ldots, x_n)\in \c(X)$. We set 
\[H^i_{\ux}(X, F):= \varinjlim_{\sU=(U_n\to\ldots\to X)\in N(\ux)} H^i_{U_{n,x_n}}(U_{n,\Nis}, F),\]
where on the right hand side we consider the local cohomology group in the finite set $U_{n,x_n}=U_n\times_X \{x_n\}$.
Assume $x_{n-1}\in \ol{\{x_n\}}^{(1)}$ and write $\ux'=(x_0,\ldots, x_{n-1})$. There is a natural map
\eq{para:hlc1}{ \delta_{\ux}: H^i_{\ux}(X, F)\to H^{i+1}_{\ux'}(X,F)}
induced by the connecting homomorphism of  the localization sequence, see \cite[Definition 1.6.2(4)]{KS-GCFT}  (or \cite[5.4]{RS-ZNP}).
Following  \cite[Definition 1.6.2(5)]{KS-GCFT}, we define for a maximal chain $\ux=(x_0,\ldots, x_d)$ the map
\eq{para:hlc2}{c_{\ux}:=s_{x_0}\circ c_{\ux,0} : F(K^h_{X,\ux}):= \Fh_{\ux}\to H^d(X_{\Nis}, F),}
where $s_{x_0}: H^d_{x_0}(X_{\Nis}, F)\to H^d(X_{\Nis}, F)$ is the forget-support-map
and $c_{\ux, 0}$ is the composition
\ml{para:hlc3}{
c_{\ux, 0} : \Fh_{\ux}=H^0_{(x_0,\ldots, x_d)}(X_{\Nis}, F)
\xr{\delta_{(x_0,\ldots, x_d)}}H^1_{(x_0,\ldots x_{d-1})}(X_{\Nis}, F)
\xr{\delta_{(x_0,\ldots x_{d-1})}}\ldots \\
\ldots\xr{\delta_{(x_0,x_1)}} H^d_{x_0}(X_{\Nis}, F).}
\end{para}

\begin{prop}[{\cite[Lemma 1.6.3]{KS-GCFT}}]\label{prop:mapNisCoh}
For any abelian group $A$, the map
\[\Phi: \Hom(H^d(X_\Nis, F), A)\to \prod_{\ux\in \mc(X)} \Hom(\Fh_{\ux}, A), \quad 
\alpha\mapsto (\alpha\circ c_{\ux})_{\ux\in \mc(X)},\]
is injective. Furthermore, the image of $\Phi$ consists of  those tuples $(\chi_\ux)_{\ux\in\mc(X)}$
satisfying the following condition:
For any $r\in\{0,\ldots, d\}$,  $\ux\in \mc_r(X)$,  and for any $a\in \Fh_{\ux}$,
we have $\chi_{\ux(y)}(\iota_y (a))=0$ for almost all $y\in \b(\ux)$, and 
\eq{prop:mapNisCoh1}{\sum_{y\in \b(\ux)} \chi_{\ux(y)}(\iota_y(a))=0,}
where $\iota_y: \Fh_{\ux}\to \Fh_{\ux(y)}$ is the map from \eqref{para:hlr2}.
\end{prop}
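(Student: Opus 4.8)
The plan is to identify the functor $\ux \mapsto \Fh_{\ux}$ together with the maps $c_{\ux}$ as encoding a Cousin-type resolution of $F$, and to read off both the injectivity of $\Phi$ and the description of its image from the exactness of that resolution. Concretely, for each $r \in \{0,\ldots,d\}$ I would assemble the stalks $\Fh_{\ux}$ over $\ux$ running through chains supported on the codimension filtration, and observe that the connecting maps $\delta_{\ux}$ of \eqref{para:hlc1} organize these groups into a complex whose $r$-th term is $\bigoplus_{\ux} H^{d-r}_{\ux}(X,F)$ (summed over the appropriate family of chains of length $d-r$). The composite $c_{\ux,0}$ of \eqref{para:hlc3} is exactly the iterated connecting map reading a class at the bottom of this complex up to a class in $H^d_{x_0}(X_\Nis,F)$, and $s_{x_0}$ forgets supports. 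The key input is that the localization/Cousin complex computing $H^d(X_\Nis,F)$ is exact in the relevant degrees on the Nisnevich site of a scheme with $X^{(d)}=X_{(0)}$, so that $H^d(X_\Nis,F)$ is the cokernel of the last differential.

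First I would make precise, by a Nisnevich-local calculation, that $H^i_{\ux}(X,F) = 0$ unless $i$ equals the codimension of $x_n$, which is a standard consequence of the Gersten/Cousin formalism for Nisnevich-local cohomology with supports (here one uses that the henselian local rings $\sOh_{X,\ux}$ are the stalks appearing in \ref{para:hlr}). Granting this purity, the iterated $\delta$'s in \eqref{para:hlc3} are the differentials of the Cousin complex, and each forget-support map $s_{x_0}$ realizes $H^d(X_\Nis,F)$ as the degree-$d$ cohomology of that complex. Dualizing the statement ``$H^d(X_\Nis,F) = \Coker$'' against an arbitrary abelian group $A$ immediately yields injectivity of $\Phi$: a homomorphism $\alpha$ that vanishes after precomposition with every $c_{\ux}$ must vanish on the image of every $H^d_{x_0}$, hence on all of $H^d(X_\Nis,F)$ since the forget-support maps are jointly surjective.

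For the image description I would interpret the compatibility condition as the statement that a tuple $(\chi_\ux)$ factors through the cokernel precisely when it kills the image of the previous differential. Fixing $r$, $\ux \in \mc_r(X)$, and $a \in \Fh_{\ux}$, applying $\delta$ and then forgetting supports produces a class in $H^d(X_\Nis,F)$ whose expression in terms of the maximal chains $\ux(y)$, $y \in \b(\ux)$, is exactly $\sum_{y} c_{\ux(y)}(\iota_y(a))$ — this is where the map $\iota_y$ of \eqref{para:hlr2} enters, since inserting $y$ in the $r$-th spot is the geometric operation dual to the differential. That this sum vanishes in $H^d(X_\Nis,F)$ is the relation coming from exactness at the $(d-1)$-st spot, and the finiteness claim (almost all terms vanish) reflects that a fixed section is supported on finitely many codimension-$r$ points; I would verify this by spreading $a$ out over a strict Nisnevich neighborhood and noting its support meets only finitely many $\b(\ux)$-points. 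Conversely, any tuple satisfying these relations descends along the cokernel presentation to a well-defined $\alpha$ with $\alpha \circ c_{\ux} = \chi_\ux$.

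The main obstacle I anticipate is establishing the purity/exactness of the Cousin complex in the Nisnevich (rather than Zariski) topology with the iterated-henselization stalks $\sOh_{X,\ux}$ of \ref{para:hlr}, and in particular matching the combinatorics of the maps $\iota_y$ against the differentials so that the signs and the indexing by $\b(\ux)$ come out correctly. Since this is cited as \cite[Lemma 1.6.3]{KS-GCFT}, I expect the cleanest route is to reduce to that reference by checking that the present setup — with $F$ an arbitrary abelian Nisnevich sheaf and $\sOh_{X,\ux} = R_n$ as in \ref{para:hlr} — satisfies the hypotheses there, rather than redeveloping the Cousin formalism from scratch.
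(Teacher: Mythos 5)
There is a genuine gap at the heart of your argument: the purity claim that $H^i_{\ux}(X,F)=0$ unless $i$ equals the codimension of $x_n$ is false for an arbitrary abelian Nisnevich sheaf $F$ on a scheme that is merely reduced, noetherian and equidimensional. Purity of local cohomology is a Gersten-type property that holds for special sheaves (e.g.\ Milnor $K$-theory on regular schemes, which is why Lemma \ref{lem:c-vs-del} can invoke \cite[Proposition 10(8)]{Kerz}), but it is emphatically not automatic; already a skyscraper sheaf at a non-closed, non-generic point violates it, as does $\sO_X$ on a non-Cohen--Macaulay scheme. Consequently the Cousin complex $E_1^{\bullet,0}$ of the coniveau spectral sequence is \emph{not} a resolution of $F$, and your plan of ``reading off both the injectivity and the image from the exactness of that resolution'' does not get off the ground as stated. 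What is true, and what the actual proof (sketched in \ref{prop:mapNisCoh;rem1} and carried out in \cite[Lemma 1.6.3]{KS-GCFT}) uses, is only the one-sided vanishing $H^i_x(X_\Nis,F)=0$ for $i>\codim(x)$, which follows from Grothendieck--Nisnevich vanishing applied to the punctured henselian local scheme $\Spec \sOh_{X,x}\setminus\{x\}$, with no purity input. This already kills $E_1^{p,q}$ for $q>0$ and forces $E_\infty^{p,d-p}=0$ for $p<d$, so that $H^d(X_\Nis,F)=\Coker\bigl(\bigoplus_{x\in X^{(d-1)}}H^{d-1}_x\to\bigoplus_{x\in X^{(d)}}H^d_x\bigr)$ in the top degree only.

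The second structural point you are missing is why the answer is indexed by \emph{chains} rather than by points of fixed codimension: a single exact Cousin complex would produce a presentation indexed by $X^{(p)}$, not by $\mc(X)$. The correct argument is an induction on $d$. One writes $H^d_{x_0}(X_\Nis,F)$ as a quotient of $H^{d-1}(Y_\Nis,F)$ with $Y=\Spec\sOh_{X,x_0}\setminus\{x_0\}$ a $(d-1)$-dimensional scheme (using that a henselian local scheme has no higher Nisnevich cohomology), applies the inductive hypothesis to $Y$, and observes that maximal chains on $Y$ correspond to maximal chains on $X$ beginning at $x_0$; each inductive step contributes one more point to the chain and one more family of relations, which is exactly where the conditions for $r=0,\dots,d$ and the maps $\iota_y$ of \eqref{para:hlr2} come from. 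Your overall architecture --- present $H^d$ as a cokernel and dualize against $A$ --- is the right shape, and your finiteness argument by spreading out to a finite Nisnevich level is fine, but you should replace the false purity/exact-Cousin input by the Grothendieck--Nisnevich vanishing plus this induction over punctured henselizations (or, as you suggest at the end, simply verify the hypotheses of \cite[Lemma 1.6.3]{KS-GCFT} and cite it, which is what the paper does).
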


\begin{para}
For $\ux\in \mc_r(X)$, $r\in \{0,\ldots, d\}$, and $a\in \Fh_{\ux}$, Proposition \ref{prop:mapNisCoh}
implies
\eq{para:hlc3.5}{c_{\ux(y)}(\iota_{y}(a))=0, \text{ for almost all }y\in \b(\ux) \quad \text{and}\quad
\sum_{y\in \b(\ux)}c_{\ux(y)}(\iota_{y}(a))=0.}
Note that in case $r=d$,  the composition 
\eq{para:hlc4}{\Fh_{\ux}\xr{\iota_y} \Fh_{\ux(y)}\xr{\delta_{\ux(y)}}
H^1_{\ux}(X_{\Nis}, F)}
is zero, for all $y\in\b(\ux)$. In particular $c_{\ux(y)}\circ \iota_y=0$, for all $y\in\b(\ux)$.
\end{para}

\def\cZar{c^{\Zar}}

\begin{para}\label{prop:mapNisCoh;rem1}
Let $F$ be a presheaf of abelian groups on $X_\Zar$ and $\ux=(x_0,\ldots, x_n)\in \c(X)$.
We can define the Zariski stalk $F_{\ux}$ of $F$ at $\ux$ as above, but in fact
$F_{\ux}=F_{x_n}$.
If $\ux\in \mc(X)$, we can define also the map analogous to \eqref{para:hlc3}:
\eq{para:hlc3Zar}{
\cZar_{\ux, 0} : F_{\ux} \to H^d_{x_0}(X_{\Zar}, F).}
In \cite{KS-GCFT} Proposition \ref{prop:mapNisCoh} is deduced by induction on the dimension from the coniveau  spectral sequence for Nisnevich cohomology and 
the Grothendieck-Nisnevich vanishing. Since the Zariski analogue of both statements hold, we also have a Zariski analogue of Proposition \ref{prop:mapNisCoh}. 
In particular, for $w\in X_{(0)}$, the map
\[\underset{\ux\in \mc(X),x_0=w}{\bigoplus} F_{\ux} \overset{\cZar_{\ux,0}}{\longrightarrow}
H^d_w(X_\Zar, F)\]
is surjective. This follows from the Zariski analogue of Proposition \ref{prop:mapNisCoh}
applied to the $(d-1)$-dimensional scheme $\Spec(\sO_{X,w})\backslash \{w\}$.
\end{para}

\section{Some auxiliary results for relative Milnor $K$-theory}


In this section $k$ denotes any field and 
$X$ is a reduced noetherian excellent separated $k$-scheme of dimension $d<\infty$, such that $X^{(d)}=X_{(0)}$.

\begin{para}\label{para:hltame}
Let $T$ be a noetherian reduced purely 1-dimensional and excellent semilocal scheme with total ring of fractions 
$\kappa(T)$ and denote by $\nu : \tilde{T}\to T$ the normalization.
Writing $T$ as a union of irreducible components $T=\cup_i T_i$ we obtain
$\kappa(T)=\prod_i \kappa(T_i)$ and $\tilde{T}=\coprod_i \tilde{T}_i$ with the obvious notation.
 Let $S$ be the set of closed points of $T$ and set $\kappa(S)=\prod_{s\in S} \kappa(s)$. 
Then we define
\[\partial_{S}:= \sum_{s\in S} \sum_{s'\in \nu^{-1}(s)}  \Nm_{\kappa(s')/\kappa(s)}\circ \partial_{v_{s'}} 
: K^M_r(\kappa(T))\to K^M_{r-1}(\kappa(S)),\]
where $v_{s'}$ denotes the discrete valuation on $\Frac(\sO_{\tilde{T},s'})$ defined by $s'$,
  $\partial_{v_{s'}}$ is the classical tame symbol, and $ \Nm_{\kappa(s')/\kappa(s)}$ is the norm map.

Let $\ux=(x_1, \ldots, x_{n-1}, x_n)\in \c(X)$ and assume $x_{n-1}\in X_{(d-1)}$ and $x_n\in X_{(d)}$,
in particular $x_{n-1}\in \ol{\{x_n\}}^{(1)}$.
Set $X'=\ol{\{x_{n-1}\}}$ and  $\ux':=(x_1, \ldots, x_{n-1})\in\c(X')$.
We define
\eq{para:hltame0}{\partial_{\ux}:=\partial_{S_{d-1}}: K^M_r(K^h_{X,\ux})\to K^M_r(K^h_{X',\ux'})}
Here we use the following notation: 
$S_{d-1}$ is the set of closed points of the  reduced 1-dimensional and excellent semi-local ring  $\sO^h_{X,\ux'}$, 
where we view  $\ux'$ as a chain  on $X$;
note $K^h_{X,\ux}=\Frac(\sO^h_{X,\ux'})$ and  $\kappa(S_{d-1})=K^h_{X',\ux'}$, by Lemma \ref{lem:hlr}.

For $\ux\in \mc(X)$ as above and $i\in\{0,\ldots, d\}$ set
$\ux_i=(x_0,\ldots, x_i)\in\mc(\ol{\{x_i\}})$.
We denote by $\partial_{X, \ul{x}}$ the following composition
\eq{para:hltame1}{\partial_{X, \ul{x}}:=
\partial_{\ux_1}\circ \partial_{\ux_2}\circ\ldots\circ \partial_{\ux_d}
: K^M_r(K^h_{X,\ux})\to K^M_{r-d}(\kappa(x_0)). }
For $d=0$ this is  the identity (by convention).
\end{para}

\begin{lem}\label{lem:c-vs-del}
Assume $x_0$ is contained in $X_{\rm reg}$ the regular locus of $X$.
Then  the following diagram commutes
\[\xymatrix{
K^M_d(K^h_{X,\ux})\ar[rr]^{c_{\ux,0}}\ar[dr]_{\partial_{X,\ux}}& &
H^d_{x_0}(X_{\Nis}, K^M_{d,X}),\\
& \Z\ar[ur]_-{\theta_{x_0}}^-{\simeq}
}\]
where $c_{\ux,0}$ is the map  \eqref{para:hlc3}, $\partial_{X,\ux}$ is the map \eqref{para:hltame1}, and
$\theta_{x_0}$ the  map \eqref{para:pairing2.5}.
\end{lem}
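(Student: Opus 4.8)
The plan is to reduce to the case that $X$ is regular and then to recognise both $c_{\ux,0}$ and $\theta_{x_0}\circ\partial_{X,\ux}$ as two descriptions of the same composition of residue maps attached to the Gersten resolution of $K^M_{d,X}$. First I would reduce to $X$ regular. All four maps in the diagram are computed from $K^M_{d,X}$ in an arbitrarily small Nisnevich neighborhood of the chain $\ux$, and each is insensitive to shrinking $X$ to a Zariski-open neighborhood of $x_0$ (for $H^d_{x_0}$ this is excision, and the higher local rings defining $\partial_{X,\ux}$ and the colimit defining $c_{\ux,0}$ only see such neighborhoods). Since $x_0\in\overline{\{x_i\}}$ for every $i$, any open neighborhood of $x_0$ contains the generic point $x_i$ of the irreducible set $\overline{\{x_i\}}$, hence contains the whole chain. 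Using $x_0\in X_{\rm reg}$ I may therefore replace $X$ by a regular open neighborhood of $x_0$ and assume $X$ regular of pure dimension $d$.

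After this reduction Kerz' Gersten resolution \cite{Kerz} applies to $K^M_{d,X}$, giving a flasque (Cousin) resolution
\[K^M_{d,X}\to \bigoplus_{x\in X^{(0)}}(i_x)_*K^M_d(\kappa(x))\to \cdots \to \bigoplus_{x\in X^{(p)}}(i_x)_*K^M_{d-p}(\kappa(x))\to\cdots,\]
whose differentials are the tame-symbol residues, the residue along a possibly non-regular branch of $\overline{\{x\}}$ being computed through the normalization and the norm, exactly as in the definition of $\partial_S$ in \ref{para:hltame}. In particular one obtains purity: $H^i_x(X_\Nis,K^M_{d,X})=0$ for $i\neq\codim(x)$, together with a canonical isomorphism $H^p_x(X_\Nis,K^M_{d,X})\cong K^M_{d-p}(\kappa(x))$ for $x\in X^{(p)}$; the map $\theta_{x_0}$ is the instance $p=d$ of this isomorphism, where $K^M_0(\kappa(x_0))=\Z$.

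Next I would interpret the intermediate groups in the composition \eqref{para:hlc3}. After $j$ of the connecting maps $\delta$ one sits in $H^j_{(x_0,\ldots,x_{d-j})}(X_\Nis,K^M_{d,X})$, the top point $x_{d-j}$ having codimension $j$. Taking the colimit of the purity isomorphism over the strict Nisnevich neighborhoods of the chain and using Lemma \ref{lem:hlr} to identify the residue fields of $\sOh_{X,(x_0,\ldots,x_{d-j})}$, I obtain a natural isomorphism
\[H^j_{(x_0,\ldots,x_{d-j})}(X_\Nis,K^M_{d,X})\;\cong\; K^M_{d-j}\bigl(K^h_{\overline{\{x_{d-j}\}},(x_0,\ldots,x_{d-j})}\bigr).\]
The heart of the matter is then the one-step compatibility: under these identifications the connecting map $\delta\colon H^j_{(x_0,\ldots,x_{d-j})}(X_\Nis,K^M_{d,X})\to H^{j+1}_{(x_0,\ldots,x_{d-j-1})}(X_\Nis,K^M_{d,X})$ coincides with the tame symbol $\partial_{\ux_{d-j}}$ of \eqref{para:hltame0}. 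This is precisely the statement that the $d_1$-differential of the coniveau spectral sequence (which is built from these localization connecting maps, cf.\ \ref{prop:mapNisCoh;rem1} and \cite{KS-GCFT}), restricted to the chain, is the Rost--Schmid residue of the Gersten complex.

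Finally I would compose. Granting the one-step compatibility for $j=0,\ldots,d-1$, the full composition $c_{\ux,0}=\delta\circ\cdots\circ\delta$ is identified with $\partial_{\ux_1}\circ\cdots\circ\partial_{\ux_d}=\partial_{X,\ux}$, landing in $H^d_{x_0}(X_\Nis,K^M_{d,X})\cong K^M_0(\kappa(x_0))=\Z$, the last isomorphism being $\theta_{x_0}$; this yields $c_{\ux,0}=\theta_{x_0}\circ\partial_{X,\ux}$, as claimed. The main obstacle is exactly the one-step compatibility at the Nisnevich--henselian chain level: one must track the purity isomorphism through the localization sequence and check that the decomposition of the relevant one-dimensional henselian semilocal ring into its branches (the points of its normalization, matching the product $R_i=\prod_{\fp}R_{i-1,\fp}^h$ of \ref{para:hlr}), together with the attendant norm maps, reproduces the formula $\sum_{s'\in\nu^{-1}(s)}\Nm_{\kappa(s')/\kappa(s)}\circ\partial_{v_{s'}}$ defining $\partial_S$. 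Once this is in place the remaining reductions and the final composition are formal.
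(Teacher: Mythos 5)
Your proposal is correct and follows essentially the same route as the paper: reduce to $X$ regular, use Kerz' Gersten resolution (whose terms are acyclic for global sections) to compute the local cohomology groups along the chain, identify $H^j_{(x_0,\ldots,x_{d-j})}(X_{\Nis},K^M_{d,X})$ with $K^M_{d-j}(K^h_{\overline{\{x_{d-j}\}},(x_0,\ldots,x_{d-j})})$ so that each connecting map $\delta$ becomes the tame-symbol residue $\partial_{S_{d-j-1}}$, and compose. The paper's proof is just a terser version of this, asserting the one-step compatibility via a commutative diagram rather than isolating it as you do.
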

\begin{proof}
We may assume $X=X_{\rm reg}$.
By \cite[Proposition 10(8)]{Kerz} the Gersten complex viewed as complex on $X_\Nis$ is a resolution of $K^M_{d,X}$;
since its terms are furthermore acyclic for the global section functor,  we may use it to compute 
the local cohomology. This in particular yields the identifications in the diagram below for $0\le j\le d-1$
\eq{lem:c-vs-del1}{\xymatrix{
H^j_{(x_0,\ldots, x_{d-j})}(X_{\Nis}, K^M_{d,X})\ar[rr]^-{\delta_{(x_0,\ldots, x_{d-j})}}\ar@{=}[d]& &
H^{j+1}_{(x_0,\ldots, x_{d-j-1})}(X_{\Nis}, K^M_{d,X})\ar@{=}[d]\\
K^M_{d-j}(K^h_{Y, (x_0,\ldots, x_{d-j})})\ar[rr]^-{\partial_{S_{d-j-1}}} & &
K^M_{d-j-1}(K^h_{Z, (x_0,\ldots, x_{d-j-1})}),
}}
where $Y:=\ol{\{x_{d-j}\}}$, $Z:=\ol{\{x_{d-j-1}\}}$, and  $S_{d-j-1}$ denotes the set of closed points in 
$\sOh_{Y, (x_0,\ldots, x_{d-j-1})}$ and the other notation is taken from \ref{para:hlc} and \ref{para:hltame}. 
Composing the diagrams for $j=0,\ldots, d-1$ yields the statement. 
\end{proof}

\begin{lem}[{\cite[Proposition 2.9]{KS-GCFT}}]\label{lem4;recpairing}
Let $i: Y\to X$ be a closed immersion with $Y$ integral of dimension $e$ and assume  $Y\cap X_{\rm reg}\neq \emptyset$.
Let $D\subset X$ be a closed subscheme which does not contain $Y$.
Then there exists a proper closed subscheme $E\subset Y$ and a map (see \ref{para:mod-pairing} for notation)
\eq{gysinKM}{ i_*: H^e(Y_\Nis,V_{e,Y|E}) \to H^d(X_\Nis,V_{d, X|D})}
which is uniquely determined by the requirement that 
for any regular open $U\subset X\setminus D$ and regular open $U'\subset (Y\cap U)\setminus E$ which is dense in $Y$, 
the following diagram commutes:
\eq{gysinKM2}{\xymatrix{
 Z_0(U') \ar[r]\ar[d]^{i_*} & H^e(Y_\Nis,V_{e, Y|E}) \ar[d]^{i_*} \\
Z_0(U)\ar[r]&  H^d(X_\Nis,V_{d, X|D})}}
where the horizontal maps are the maps \eqref{para:mod-pairing0.1}.
Moreover, for any $\ul{y}\in \mc(Y)$ and any $\ul{x}=(\uy, x_{e+1},\ldots, x_d)\in \mc(X)$, 
the following diagram is commutative:
\eq{gysinKM3}{\xymatrix{
K^M_{e}(K^h_{Y,\ul{y}}) \ar[r]^-{c_{\ul{y}}} & H^{e}(Y_\Nis, V_{e, Y|E}) \ar[d]^{i_*} \\
K^M_{d}(K^h_{X,\ul{x}}) \ar[r]^-{c_{\ul{x}}}\ar[u]^{\partial^{\ux}_{\uy}} & H^d(X_\Nis, V_{d, X|D})}}
where we set (using the notation from \ref{para:hltame})
\[\partial^{\ux}_{\uy}:=\partial_{\ux_{e+1}}\circ\ldots\circ\partial_{\ux_d} : 
K^M_d(K^h_{X,\ux}) \to K^M_{e}(K^h_{Y,\uy}),\]
with $\ux_j=(\uy, x_{e+1},\ldots, x_j)$, for $j\ge e+1$.
\end{lem}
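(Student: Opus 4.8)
The plan is to realize $i_*$ as a morphism between the (relative) coniveau/Gersten complexes computing the two cohomology groups, and then to read off both assertions from this single construction; the translation between $V_{\bullet|\bullet}$ and $K^M_\bullet(\sO,I_\bullet)$ via \eqref{para:mod-pairing0.5} is what lets the argument of \cite[Proposition 2.9]{KS-GCFT} run verbatim here. First the uniqueness: for any choice of $E$ for which the construction succeeds, the lower horizontal map $Z_0(U')\to H^e(Y_\Nis,V_{e,Y|E})$ in \eqref{gysinKM2} is surjective by the surjectivity recalled in \ref{para:mod-pairing} (i.e.\ \cite[Theorem 2.5]{KS-GCFT}) applied to $Y$. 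Hence a map $i_*$ satisfying \eqref{gysinKM2} is pinned down on generators and is unique.

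For existence I would take $E:=D\times_X Y$, a proper closed subscheme of $Y$ since $Y\not\subset D$. The point is that the top two terms of the Gersten--Kato complexes of $V_{e,Y|E}$ and of $V_{d,X|D}$ carry the same data on common points: writing $c:=d-e$ one has $Y^{(j)}\subseteq X^{(j+c)}$ with equal residue fields, the top term $\bigoplus_{y\in Y_{(0)}}\Z$ (resp.\ $\bigoplus_{x\in X_{(0)}}\Z$) computing $H^e$ (resp.\ $H^d$) is $Z_0$, and the differential into it is the divisor/tame-symbol map. A class in $H^e(Y_\Nis,V_{e,Y|E})$ is thus represented by a $0$-cycle on $U'$, its relations being divisors of symbols $f\in K^M_1(\kappa(y))$ for $y\in Y_{(1)}$ subject to the modulus condition along $E$. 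Such a curve $\ol{\{y\}}$ is equally a $1$-dimensional subvariety of $X$ with the same residue field, the tame symbol along it is computed identically in $Y$ and in $X$, and with $E=D\times_X Y$ the modulus condition along $E$ coincides on $\ol{\{y\}}$ with the modulus condition along $D$ (both being congruence modulo $I_{D\cap\ol{\{y\}}}$; for $r=1$ the sheaves $V$ and $K^M(\sO,I)$ even agree). Hence the inclusion of $0$-cycles $Z_0(U')\hookrightarrow Z_0(U)$ carries relations on $Y$ to relations on $X$ and descends to $i_*$, which tautologically satisfies \eqref{gysinKM2}.

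For the diagram \eqref{gysinKM3} the two tame-symbol operators compose correctly: since $\ux_j=\uy_j$ for $j\le e$ and each $\partial_{\ux_j}$ is intrinsic to the subvariety $\ol{\{x_j\}}\subseteq Y\subseteq X$, one gets $\partial_{Y,\uy}\circ\partial^{\ux}_{\uy}=\partial_{X,\ux}$. For a chain whose base point $x_0=y_0$ lies in the dense open $(Y\cap U)\setminus E$ (contained in $X_{\rm reg}\cap Y_{\rm reg}$), Lemma \ref{lem:c-vs-del}, applied on $X$ and on $Y$ and combined with the cycle-class maps \eqref{para:pairing2.6} and \eqref{para:mod-pairing0.1} (here $V$ and $K^M$ agree near $x_0$), identifies $c_{\ux}(\beta)$ with the image of the $0$-cycle $\partial_{X,\ux}(\beta)\cdot[x_0]\in Z_0(U)$ and $c_{\uy}(\gamma)$ with the image of $\partial_{Y,\uy}(\gamma)\cdot[y_0]$. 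Taking $\gamma=\partial^{\ux}_{\uy}(\beta)$ and using $i_*[x_0]=[x_0]$ on $0$-cycles gives
\[
i_*\big(c_{\uy}(\partial^{\ux}_{\uy}(\beta))\big)=\big[\partial_{Y,\uy}\partial^{\ux}_{\uy}(\beta)\cdot x_0\big]=\big[\partial_{X,\ux}(\beta)\cdot x_0\big]=c_{\ux}(\beta).
\]
For an arbitrary maximal chain, in particular with $x_0\in D$, the $0$-cycle representation breaks down (the support may lie on $D$), and I would instead argue directly on the relative coniveau complexes: the maps $c_{\ux}$ and $c_{\uy}$ are by \eqref{para:hlc2}--\eqref{para:hlc3} built from the boundary maps $\delta$, which the Gersten resolution identifies with the tame symbols $\partial$, and $i_*$, being constructed above as a morphism of these complexes, intertwines the two towers of boundary maps, whence \eqref{gysinKM3} in general.

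The main obstacle is the existence step, namely producing $i_*$ as a morphism of the relative coniveau complexes that is coherent over all of $X$ (including the contributions supported on $D$): one must check that, under restriction to every curve, the modulus decoration along $E$ on $Y$ is compatible with the decoration along $D$ on $X$, and that the tame-symbol differentials respect this. This modulus-compatibility of residues under the closed immersion, rather than the final diagram chase, is where the real work lies, and it is handled exactly as in \cite[Proposition 2.9]{KS-GCFT} once \eqref{para:mod-pairing0.5} is invoked to pass between $V_{d,X|D}$ and $K^M_d(\sO_X,I_D)$.
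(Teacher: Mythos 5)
The paper offers no independent proof of this lemma: the sentence preceding it simply states that the result is \cite[Proposition 2.9]{KS-GCFT} and that, in view of the comparison \eqref{para:mod-pairing0.5} between $V_{d,X|D}$ and $K^M_d(\sO_X,I_D)$, the proof of \emph{loc.\ cit.} applies verbatim. Your proposal ultimately takes the same route --- you derive uniqueness correctly from the surjectivity of $Z_0(U')\to H^e(Y_\Nis,V_{e,Y|E})$ (i.e.\ \cite[Theorem 2.5]{KS-GCFT}), and you explicitly defer the substantive existence step (compatibility of the modulus decorations with the tame-symbol differentials under the closed immersion) to \cite[Proposition 2.9]{KS-GCFT}. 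So in structure your argument and the paper's coincide.

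Two caveats on the extra detail you supply. First, you assert that $E:=D\times_X Y$ works, whereas the lemma (deliberately) only claims the existence of \emph{some} closed subscheme $E$, and the companion statement Proposition \ref{para:Vbar2} explicitly requires a ``large enough'' $E$ containing $D\times_X Y$. Since $Y$ is merely integral (possibly singular and non-normal), the identification of the relation subgroup of $Z_0(U')$ for $(Y,E)$ with relations for $(X,D)$ is sensitive to whether the modulus condition is imposed on the curves themselves or on their normalizations, and to how stalks of $V_{e,Y|E}$ along chains through singular points of $Y$ lift to stalks of $V_{d,X|D}$; this is precisely where the conductor-type enlargement of $E$ enters in \cite{KS-GCFT}, so pinning $E$ to the scheme-theoretic intersection overstates what the cited argument gives. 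Second, your verification of \eqref{gysinKM3} via Lemma \ref{lem:c-vs-del} and $0$-cycle representatives is valid only for chains with $x_0\in U'$; for chains based in $D$ (which is the case actually needed in the proof of \ref{HS2}) you appeal to $i_*$ being a morphism of coniveau complexes, but your existence construction produced $i_*$ only on the quotient presentation $Z_0/({\rm relations})$, not as a map of complexes, so this step is thinner than it reads. Neither point derails the overall logic, since both are exactly the content of \cite[Proposition 2.9]{KS-GCFT} to which you (and the paper) appeal.
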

\begin{proof}
This is essentially \cite[Proposition 2.9]{KS-GCFT} and the same proof works.
Since in {\em loc. cit.} the assumptions  and  the  formulation are  slightly different, 
we repeat the argument for the convenience of the reader.
First note that if a map $i_*$ as in the statement exists such that \eqref{gysinKM3} commutes, then 
\eqref{gysinKM2} commutes as well, by Lemma \ref{lem:c-vs-del}. This later commutativity uniquely characterizes
the map $i_*$, as the horizontal maps in \eqref{gysinKM2} are surjective, see \eqref{para:mod-pairing0.1}.
Thus it remains to construct a map $i_*$ (for an appropriate $E$) which makes \eqref{gysinKM3} commutative.
By \cite[Proposition 2.7]{KS-GCFT} there exists a proper closed subscheme $E\subset Y$ with ideal sheaf $I_E$, such that
for any \'etale map $X'\to X$ and any $y'\in X'$ over the generic point of $Y$ with closure $Y'=\ol{\{y'\}}$ and any 
$w\in {Y'}^{(1)}$ the composition 
\mlnl{K^M_e(\sO_{Y'}, I_E\sO_{Y'})_w\to K^M_{e}(k(y'))\cong H^{d-e}_{y'}(X'_{\Nis}, K^M_d(\sO_X, I_D))\\
\to H^{d-e}_{(w,y')}(X'_{\Nis}, K^M_d(\sO_X, I_D))\xr{\delta_{(w,y')}}
H^{d-e+1}_w(X'_{\Nis}, K^M_d(\sO_X, I_D))}
is zero. We fix this $E$ in the following. For $\uy=(y_0,\ldots, y_e)\in\mc(Y)$ we consider the composition
\mlnl{\chi_{\uy}: (V_{e, Y|E})_{\uy}^h=K^M_e(K^h_{Y,\uy})\xr{\simeq} H^{d-e}_{\uy}(X_{\Nis}, V_{d, X|D})\\
\xr{\delta_1\circ\ldots\circ \delta_e} H^d_{y_0}(X_{\Nis}, V_{d,X|D}) \to H^d(X_{\Nis}, V_{d, X|D}),}
where the first isomorphism is induced by the Gersten resolution 
together with the definition of the local cohomology group in \ref{para:hlc} and we set 
$\delta_i=\delta_{(y_0,\ldots, y_i)}$ with the notation from \eqref{para:hlc1}.
The family $\{\chi_{\uy}\}_{\uy\in \mc(Y)}$ satisfies the condition \eqref{prop:mapNisCoh1}: for
$r\in \{0,\ldots, e-1\}$ this follows from the definition of the $\delta_i$, for $r=e$ it follows from our choice of $E$ above and 
\eqref{para:mod-pairing0.5}.
Thus by Proposition \ref{prop:mapNisCoh} there is a map $i_*$ as in the statement, 
such that 
\[ i_*\circ c_{\uy}=\chi_{\uy}:K^M_e(K_{Y,\uy}^h)\to H^d(X_{\Nis}, V_{d,X|D}),
\]
 all $\uy\in \mc(Y)$. The commutativity of \eqref{gysinKM3} follows immediately from this equality together with
 the commutativity of \eqref{lem:c-vs-del1}. 
\end{proof}

\begin{para}\label{para:Vbar}
We recall some constructions and results from \cite[\S4]{KS-GCFT}.
Let $D$ be a closed subscheme of $X$ which is nowhere dense and is defined by the ideal sheaf $I\subset \sO_X$. 
We define the Nisnevich sheaf $\Vb_{r, X|D}$ on $X$ by 
\[U\mapsto \Vb_{r,X|D}(U)=
\Ker\left(\bigoplus_{\eta\in U^{(0)}} K^M_r(k(\eta))\to \bigoplus_{x\in U^{(1)}}
\frac{\oplus_{\eta>x}K^M_r(k(\eta)^h_x)}{(V_{r, U|D_U})_x^h}\right),\]
where $U$ runs over the \'etale $X$-schemes and $D_U=D\times_X U$. 
Note that this sheaf agrees with the sheaf $\ol{K}^M_r(\sO_X, I)$ defined in {\em loc. cit.} for $r=1$ and, if $d\ge 2$, for all $r\ge 1$: 
For $r=1$, this is immediate. For $d\ge 2$, $(V_{r, U|D_U})_x^h=K^M_r(\sO_U, I_U)^h_x$ since the residue fields $k(x)$, for $x\in U^{(1)}$, have infinitely many elements (see \ref{para:mod-pairing}). 
Note that we have a natural map
\[V_{r,X|D}\to \Vb_{r, X|D}.\]
The cokernel  of this map is supported in codimension 2 and
the kernel in codimension 1.  Hence Grothendieck-Nisnevich vanishing yields an isomorphism
\eq{para:Vbar0}{H^d(X_\Nis, V_{r, X|D})\xr{\simeq} H^d(X_\Nis, \Vb_{r, X|D}).}
We will need the following  statement from {\em loc. cit.}:

\end{para}

\begin{prop}[{\cite[Proposition 4.2]{KS-GCFT}}]\label{para:Vbar2}
Let $f: Y\to X$ be a finite morphism and assume $Y$ is reduced and $f(Y^{(0)})\subset X^{(0)}$. 
Assume $r=1$ or $d\ge 2$. Then the 
norm map on Milnor $K$-theory
\[\Nm: f_*\bigg(\bigoplus_{\eta\in  Y^{(0)}} i_{\eta_*}K^M_{r,\eta}\bigg) \to 
      \bigoplus_{\xi\in  X^{(0)}} i_{\xi_*}K^M_{r,\xi},\]
where $i_\eta:\eta\inj Y$ and $i_\xi: \xi\inj X$ are the natural inclusions,
induces a morphism
\[\Nm: f_*(\Vb_{r, Y|E})\to \Vb_{r, X|D},\]
for some large enough nowhere dense closed subscheme $E\subset Y$ containing $D\times_X Y$.
\end{prop}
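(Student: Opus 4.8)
The plan is to reduce the assertion to a purely local statement at codimension-one points, and there to a computation with norms of units in the unit filtration. By construction (see \ref{para:Vbar}), both $\Vb_{r,Y|E}$ and $\Vb_{r,X|D}$ are subsheaves of the ``generic'' sheaves $\bigoplus_{\eta\in Y^{(0)}} i_{\eta_*}K^M_{r,\eta}$ and $\bigoplus_{\xi\in X^{(0)}} i_{\xi_*}K^M_{r,\xi}$, cut out as the kernels of the codimension-one boundary maps. Since the norm $\Nm$ is already defined on these ambient sheaves, it suffices to show that it carries sections of $\Vb_{r,Y|E}$ into sections of $\Vb_{r,X|D}$; and because membership in $\Vb$ is the vanishing of those boundaries at each codimension-one point, this is a pointwise condition that can be checked after Nisnevich localization at a single $x\in X^{(1)}$.

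First I would localize. Fix $x\in X^{(1)}$ and set $A=\sOh_{X,x}$. Since $f$ is finite with $f(Y^{(0)})\subset X^{(0)}$, every point of $f^{-1}(x)$ lies in $Y^{(1)}$, so $B:=\prod_{y\in f^{-1}(x)}\sOh_{Y,y}$ is a reduced finite $A$-algebra which is one-dimensional and semilocal, with $\Frac(B)=\prod_\zeta k(\zeta)$ the product over the generic points above $x$. Under these identifications the condition ``lies in $\Vb_{r,X|D}$ at $x$'' becomes membership in the stalk $(V_{r,X|D})_x^h\subset\bigoplus_{\eta>x}K^M_r(k(\eta))$, and similarly for $Y$. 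The hypothesis that $r=1$ or $d\ge 2$ guarantees that the residue fields at the relevant codimension-one points are infinite, so that these stalks agree with those of the relative Milnor $K$-sheaf and the Bass--Tate generation is available. Thus the proposition becomes the local claim
\[\Nm_{B/A}\bigl((V_{r,Y|E})^h\bigr)\subseteq (V_{r,X|D})_x^h\]
for $E$ chosen deep enough near $x$; as $D$ meets $f(Y)$ in finitely many codimension-one points and there is no condition away from $D$, the finitely many local choices assemble into one closed subscheme $E\supseteq D\times_X Y$.

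For the local claim I would argue by reduction to $r=1$. In that case $V_{1,Y|E}=1+I_E$, $V_{1,X|D}=1+I_D$, and $\Nm_{B/A}$ is the ordinary norm $\Frac(B)^\times\to\Frac(A)^\times$; the inclusion $\Nm_{B/A}(1+I_E)\subseteq 1+I_D$ holds once $E$ is deep enough, by the standard estimate on norms of higher principal units in a finite extension of one-dimensional local rings of bounded ramification (here finite because $B/A$ is finite and excellent). For general $r$, recall that $V_{r,Y|E}$ is generated by symbols $\{v\}\cup\delta$ with $v\in 1+I_E$ and $\delta\in K^M_{r-1}(B)$, and exploit the projection formula $\Nm_{B/A}(\alpha\cdot f^*\gamma)=\Nm_{B/A}(\alpha)\cdot\gamma$ together with the compatibility of $\Nm$ with the boundary maps of \ref{para:hltame} (which already incorporate the residue-field norm). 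The latter lets one detect the relative condition through boundaries and the $r=1$ congruence, while a Bass--Tate type normalization of $K^M_{r-1}(B)$ — writing its generators, modulo the relative subgroup and after enlarging $E$, in terms of symbols pulled back from $A$ and tame symbols — feeds each resulting piece either into the projection formula or into the base case.

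The main obstacle is exactly this last step: controlling $\Nm_{B/A}(\{v\}\cup\delta)$ when $\delta$ is \emph{not} pulled back from $A$, so that the projection formula does not apply directly. This is where the full norm formalism for Milnor $K$-theory (projection formula plus the norm--residue compatibility of Bass--Tate and Kato) is indispensable, and it is precisely this point that forces the enlargement of $E$: the norm of a symbol that is relative on $Y$ is again relative on $X$ only after one passes deep enough into the unit filtration. Since this computation is carried out in full in \cite[Proposition 4.2]{KS-GCFT}, and the passage between $V$ and $\Vb$ needed here is supplied by \eqref{para:Vbar0}, the same argument applies verbatim in the present generality.
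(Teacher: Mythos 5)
The paper gives no proof of this proposition at all: it is imported verbatim as \cite[Proposition 4.2]{KS-GCFT}, which is exactly the reference your sketch ultimately defers to for the one genuinely hard step (controlling $\Nm_{B/A}(\{v\}\cup\delta)$ when $\delta$ is not pulled back from the base). Your outline of that argument --- checking membership in $\Vb_{r,X|D}$ pointwise at codimension-one points, reducing to the semilocal norm $\Nm_{B/A}$ with $A=\sOh_{X,x}$, using the hypothesis ($r=1$ or $d\ge 2$) to ensure infinite residue fields so that $V_{r,Y|E}$ is generated by symbols with first entry in $1+I_E$, and combining the projection formula with the $r=1$ estimate on norms of principal units --- is a faithful reconstruction of the cited proof (your aside that there is ``no condition away from $D$'' is slightly imprecise, since unramifiedness must still be preserved there, but that is handled by the norm--residue compatibility you invoke anyway), so there is nothing in the paper itself to compare it against.
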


\begin{cor}\label{cor:approx}
Let $f:Y\to X$ be a separated morphism of finite-type with $f(Y^{(0)})\subset X^{(0)}$
and $\dim Y=\dim X=d$. Assume $r=1$ or $d\ge 2$ and $r\ge 1 $.
Let $D\subset X$ be a closed subscheme and set  $D_Y:=D\times_X Y$.
Let $x_1,\ldots, x_n\in X^{(1)}$ and $\beta_1,\ldots, \beta_n\in \bigoplus_{\eta\in Y^{(0)}}K^M_r(k(\eta))$.
Assume that there exists an open affine subscheme in $X$ containing all the points $x_1,\ldots, x_n$.
Then there exits an element $\gamma\in \bigoplus_{\eta\in Y^{(0)}}K^M_r(k(\eta))$ such that for all $i=1,\ldots, n$
\[\gamma-\beta_i\in (\Vb_{r, Y|D_Y})_{y},\quad \text{for all }y\in f^{-1}(x_i),\]
\[\Nm(\gamma)-\Nm(\beta_i)\in (\Vb_{r, X|D})_{x_i}, \]
where $\Nm: \bigoplus_{\eta\in Y^{(0)}}K^M_r(k(\eta))\to \bigoplus_{\xi\in X^{(0)}}K^M_r(k(\xi))$ is the norm map.
\end{cor}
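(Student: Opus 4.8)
The plan is to first reduce to the case where $f$ is finite, so that the norm map of Proposition \ref{para:Vbar2} becomes available; then to decouple the norm-theoretic condition from the first condition; and finally to settle everything by a simultaneous approximation statement for relative Milnor $K$-theory in a one-dimensional semilocal situation.

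Since the $x_i$ lie in a common affine open and $f$ is generically finite (the hypotheses $f(Y^{(0)})\subset X^{(0)}$ and $\dim Y=\dim X$ force $k(\eta)/k(f(\eta))$ finite for $\eta\in Y^{(0)}$), I would first shrink $X$ to an open neighbourhood of $\{x_1,\ldots,x_n\}$ on which $f$ is quasi-finite, and then apply Zariski's main theorem to factor $f$ as an open immersion followed by a finite morphism. Replacing $Y$ by the reduction of the resulting finite $X$-scheme changes neither the generic points nor the fibres $f^{-1}(x_i)$, and preserves $\dim Y=\dim X=d$ together with $f(Y^{(0)})\subset X^{(0)}$; so I may assume $f$ finite. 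Now Proposition \ref{para:Vbar2} provides a closed subscheme $E\subset Y$ containing $D_Y=D\times_X Y$ and a norm morphism of Nisnevich sheaves $\Nm\colon f_*(\Vb_{r,Y|E})\to\Vb_{r,X|D}$. It is important that this is carried out in dimension $d$, before any further localization, so that the hypothesis ``$r=1$ or $d\ge 2$'' of Proposition \ref{para:Vbar2} is the one actually given.

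Because $E\supseteq D_Y$, the stalkwise inclusions $(\Vb_{r,Y|E})_y\subseteq(\Vb_{r,Y|D_Y})_y$ hold, so it suffices to construct $\gamma$ with $\gamma-\beta_i\in(\Vb_{r,Y|E})_y$ for every $y\in f^{-1}(x_i)$ and every $i$; the asserted membership in $(\Vb_{r,Y|D_Y})_y$ follows at once. Granting this, the norm condition comes for free: the generic norm $\Nm$ is a sum of field norms on Milnor $K$-theory and hence additive, so $\Nm(\gamma)-\Nm(\beta_i)=\Nm(\gamma-\beta_i)$; and since $f$ is finite the stalk of $f_*(\Vb_{r,Y|E})$ at $x_i$ is $\bigoplus_{y\in f^{-1}(x_i)}(\Vb_{r,Y|E})_y$, so applying the sheaf map $\Nm$ of Proposition \ref{para:Vbar2} there carries the image of $\gamma-\beta_i$ into $(\Vb_{r,X|D})_{x_i}$, which is exactly the second assertion. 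Both conclusions are therefore reduced to the single approximation problem for $E$.

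To solve it I would localize: as the $x_i$ have codimension $1$, the semilocalization $X_0=\Spec\sO_{X,\{x_1,\ldots,x_n\}}$ is a reduced excellent semilocal scheme of dimension $1$ whose closed points are the $x_i$, and the points of $Y_0=Y\times_X X_0$ over the $x_i$ are precisely $\coprod_i f^{-1}(x_i)$, a finite set of codimension-$1$ points of $Y$. Under the running hypothesis the stalk $(\Vb_{r,Y|E})_y$ at such a point equals the relative Milnor $K$-group $(V_{r,Y|E})^h_y$ by \ref{para:Vbar}, since the residue fields $k(y)$ are infinite. The required statement is then that
\[
\bigoplus_{\eta\in Y^{(0)}}K^M_r(k(\eta))\longrightarrow \prod_{i=1}^{n}\ \prod_{y\in f^{-1}(x_i)}\frac{\bigoplus_{\eta>y}K^M_r(k(\eta))}{(V_{r,Y|E})^h_y}
\]
hits the tuple whose $(i,y)$-entry is the class of $\beta_i$. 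This is a Chinese-remainder/moving statement for Milnor $K$-theory over the one-dimensional semilocal scheme $Y_0$, which I would establish by writing the prescribed classes as sums of symbols and choosing representatives whose entries are units congruent to $1$ modulo the relative ideals at the remaining closed points; the infinitude of the residue fields $k(y)$ is exactly what makes such choices possible. I expect this final step — the simultaneous Milnor $K$-theoretic approximation at several codimension-$1$ points, in particular the separation of the contributions of a generic point $\eta$ specializing to several of the $y$ — to be the main obstacle, whereas the reduction to a finite morphism and the derivation of the norm condition are formal.
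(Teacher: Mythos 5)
Your overall architecture (reduce to $f$ finite, invoke Proposition \ref{para:Vbar2} to handle the norm condition, then solve a simultaneous approximation problem on a one-dimensional semilocal scheme) matches the paper's, and your derivation of the second displayed condition from the first via the sheaf-level norm is fine. The reduction to $f$ finite via Zariski's main theorem instead of compactification-plus-semilocalization is a harmless cosmetic variation (the fibres $f^{-1}(x_i)$ may grow, but that only strengthens what you prove).

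The genuine gap is the final step, which you yourself flag as ``the main obstacle'' and then do not carry out: a Chinese-remainder argument for $K^M_r$ modulo the stalks $(\Vb_{r,Y|E})^h_y$ \emph{directly on $Y$} is problematic, because $Y$ is in general non-normal, so the local rings at the $y\in f^{-1}(x_i)$ are not discrete valuation rings; at a point $y$ where several branches meet, $(V_{r,Y|E})^h_y$ is the image of symbols with entries in the single non-normal ring $\sO^h_{Y,y}$, not a product of congruence conditions on the separate branches, and your plan of ``choosing unit entries congruent to $1$ modulo the relative ideals'' does not describe this subgroup. The paper's proof resolves exactly this by inserting the normalization $\nu:\tY\to Y$: it applies Proposition \ref{para:Vbar2} \emph{twice}, to $\nu$ and to $f\circ\nu$, to find one closed subscheme $E\subset\tY$ with $\nu_*(\Vb_{r,\tY|E})\subset \Vb_{r,Y|D_Y}$ and $\Nm((f\circ\nu)_*(\Vb_{r,\tY|E}))\subset\Vb_{r,X|D}$; since $\tY$ is a finite disjoint union of Dedekind schemes, the classical Approximation Lemma in the function fields $k(\eta)$ immediately produces $\gamma$ with $\gamma-\beta_i\in(\Vb_{r,\tY|E})_{\tx}$ at every $\tx\in(f\nu)^{-1}(x_i)$, and the two inclusions transport these congruences down to $Y$ and to $X$. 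Without this normalization step (or an equivalent device) your argument is incomplete at its decisive point.
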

\begin{proof}
We may replace $f$ by a compactification and hence assume that $f$ is proper.
Furthermore, we may replace $X$ by its semi-localization at the points 
$x_1, \ldots x_n$ and $f$ by the base change. The semi-localization 
exists since $x_1,\ldots, x_n$ are contained in an affine open in $X$.
Thus $X$ is affine, integral, excellent, and 1-dimensional and $f:Y\to X$ is a 
proper, dominant, and quasi-finite morphism, whence it is finite and surjective. 
Let $\nu:\tY\to Y$ be the normalization. Thus $\tY$ is a finite disjoint union of Dedekind schemes.
By Proposition \ref{para:Vbar2} we find a closed subscheme 
$E\subset \tY$ containing $D_{\tY}$ such that 
\eq{cor:approx1}{\nu_*(\Vb_{r,\tY|E})\subset \Vb_{r, Y|D_Y} \quad \text{and}\quad 
 \Nm((f\circ\nu)_*(\Vb_{r,\tY|E}))\subset \Vb_{r,X|D}.}
By the Approximation Lemma, we find an element $\gamma\in \bigoplus_{\eta\in Y^{(0)}}K^M_r(k(\eta))$
such that
\[\gamma-\beta_i \in (\Vb_{r,\tY|E})_{\tx_i}\quad \text{for all } 
\tx_i\in (f\nu)^{-1}(x_i).\]  
The statement follows from this and \eqref{cor:approx1}.
\end{proof}

\section{Higher local symbols}\label{sec:HLS}
We introduce higher local symbols along maximal chains for  reciprocity sheaves. 
These generalize local symbols for curves, see \cite[Proposition 5.2.1]{KSY1} 
and \cite{Serre-GACC} for the classical case of commutative $k$-groups. Furthermore,
we obtain a unified construction for several higher local symbols defined in the literature, e.g., by Par\v{s}in, Lomadze, Kato
and many more. The results will be used in section \ref{sec:mod-LS} to give a 
characterization of the modulus in terms of local symbols. The content of this section will also
play a crucial role in \cite{RS-AS}.

In this section $k$ is a perfect field, 
$K$ is function field over $k$, and $X$ is an integral  scheme of finite type over $K$ and  dimension $d$.
We fix $F\in \RSCNis$. 

\begin{defn}\label{defn:HLS}
Let $\ux=(x_0,\ldots, x_d)\in \mc(X)$. We define the pairing
\eq{defn:HLS1}{(-,-)_{X/K,\ux}: F(K^h_{X,\ux})\otimes_{\Z} K^M_d(K^h_{X,\ux})\to F(K)}
as follows:
Choose  an open subscheme $V\subset X$ which is quasi-projective and contains $x_0$.
Choose a dense open regular subscheme $U\subset V$.
Choose a dense open immersions $j_V:V\inj Y$ into an integral projective $K$-scheme (a {\em projective compactification})
with structure map $f:Y\to \Spec K$
and denote by $j:U\inj Y$ the induced immersion. Note that $\ux\in\mc(Y)$.
We define \eqref{defn:HLS1} as the composition
\mlnl{F(K^h_{X,\ux})\otimes_{\Z} K^M_d(K^h_{X,\ux})=(F_{U}\otimes j_!K^M_{d,U})_{\ux}\to 
j_!(F\la d\ra_{U})_{\ux}\\
\xr{c_{\ux}} H^d(Y_{\Nis}, j_!F\la d\ra_{U} )\xr{(f,j)_*} F(K),}
where the first map is the stalk at $\ux$ of \eqref{para:pairing1}, $c_{\ux}$ is the map \eqref{para:hlc2},
and $(f,j)_*$ is the pushforward recalled in \ref{para:pairing}.
It follows from Lemma \ref{lem:HLS}\ref{lem:HLS1}, \ref{lem:HLS2} below
that this definition is independent of the choice of $V$, $U$,  and $Y$.

Take $r,s\in\{0,\ldots,d\}$. Precomposing \eqref{defn:HLS1} with the natural map 
\[F(K^h_{X, (x_{r},\ldots, x_d)})\otimes_{\Z} K^M_d(K^h_{X, (x_s, \ldots, x_d)})
\to F(K^h_{X,\ux})\otimes_{\Z} K^M_d(K^h_{X,\ux}),\]
cf. \eqref{para:hlr2}, we obtain pairings (denoted by the same symbol)
\eq{defn:HLS2}{(-,-)_{X/K,\ux}: 
F(K^h_{X,(x_{r}, \ldots, x_d)})\otimes_{\Z} K^M_d(K^h_{X,(x_{s}, \ldots, x_d)})\to F(K),}
in particular, for $r=d$ and $s=d-1$,  we get the pairing
\eq{defn:HLS3}{(-,-)_{X/K,\ux}: 
F(K(X))\otimes_{\Z} K^M_d(\Frac(\sOh_{X,x_{d-1}}))\to F(K).}

We call \eqref{defn:HLS1}, \eqref{defn:HLS2}, and \eqref{defn:HLS3} the {\em higher local symbol of $F$ at $\ux$}.
\end{defn}

\begin{lem}\label{lem:HLS}
Let the situation be as above.
\begin{enumerate}[label=(\arabic*)]
\item\label{lem:HLS1} 
The definition of the higher local symbol \eqref{defn:HLS1} is independent of the choice
of the quasi-projective open $V\subset X$ containing $x_0$, the regular dense open subset $U\subset V$,
 and the choice of the projective compactification $V\inj Y$.
\item\label{lem:HLS2} Let $X$ be quasi-projective,   $U\subset X$ a regular dense open, 
and $j: U\inj X\inj  \ol{X}$ a projective compactification. 
Let $a\in F(U)$ and $\beta\in K^M_d(K^h_{X,\ux})$.
Then
\[(a,\beta)_{X/K,\ux}=(a, \c_{\ux}(\beta))_{U\subset \ol{X}/K},\quad \text{for all }\ux\in\mc(X),\]
where $(-,-)_{U\subset \ol{X}/K}: F(U)\otimes_{\Z} H^d(\ol{X}_{\Nis}, j_!K^M_{d,U})\to F(K)$
is from \eqref{para:pairing2}.
\end{enumerate} 
\end{lem}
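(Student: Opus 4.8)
The plan is to deduce both statements from a single structural fact: for a fixed maximal chain $\ux\in\mc(Y)$, the map $c_{\ux}\colon G_{\ux}\to H^d(Y_\Nis,G)$ of \eqref{para:hlc2} is a natural transformation in the Nisnevich sheaf $G$, and it factors as $c_{\ux}=s_{x_0}\circ c_{\ux,0}$, where $c_{\ux,0}\colon G_{\ux}\to H^d_{x_0}(Y_\Nis,G)$ is built from the localization connecting maps \eqref{para:hlc1} and hence depends only on $G$ and $Y$ in a Nisnevich neighborhood of the chain.

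For part \ref{lem:HLS2}, write $\phi$ for the map \eqref{para:pairing1} on $\ol X$, and for $a\in F(U)=H^0(\ol X_\Nis,j_*F_U)$ let $m_a\colon j_!K^M_{d,U}\to j_*F_U\otimes j_!K^M_{d,U}$ be the sheaf morphism $\beta\mapsto a\otimes\beta$. On degree $d$ cohomology $H^d(m_a)$ is cup product with $a$, while on the stalk at $\ux$ it is $\beta\mapsto a|_{\ux}\otimes\beta$. Applying naturality of $c_{\ux}$ first to $m_a$ and then to $\phi$ yields
\[(a,\beta)_{X/K,\ux}=(f,j)_*\,c_{\ux}\big(\phi_{\ux}(a|_{\ux}\otimes\beta)\big)
=(f,j)_*\,H^d(\phi)\big(a\cup c_{\ux}(\beta)\big),\]
and the right-hand side is by definition $(a,c_{\ux}(\beta))_{U\subset\ol X/K}$, see \eqref{para:pairing2}. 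Thus \ref{lem:HLS2} is formal once one recalls that cup product with an $H^0$-class is induced by the corresponding morphism of sheaves.

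For the independence statement \ref{lem:HLS1} I would first note that $V$ enters the construction only through the constraints $U\subset V\subset Y$ and $x_0\in V$; since $x_0\in\ol{\{x_i\}}$ for every $i$, any open $V\ni x_0$ meets each $\ol{\{x_i\}}$ and hence contains its generic point $x_i$, so $\ux\in\mc(V)$, and two such $V$ admit a common quasi-projective refinement. Independence of $V$ therefore reduces to independence of the pair $(U,Y)$. Shrinking the regular dense open $U$ to $U'\subset U$ induces a morphism $j'_!F\la d\ra_{U'}\to j_!F\la d\ra_U$ that is an isomorphism on stalks at $\ux$ and compatible with the stalks of \eqref{para:pairing1}; naturality of $c_{\ux}$ together with the compatibility of the pushforward $(f,j)_*$ with shrinking the open subset (from \cite{RS-ZNP}) then identifies the two symbols.

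The remaining, and most delicate, point is independence of the projective compactification $Y$. Here I would observe that, by excision, the group $H^d_{x_0}(Y_\Nis,j_!F\la d\ra_U)$ and the map $c_{\ux,0}$ depend only on a neighborhood of the chain inside $V$, hence are canonically the same for all compactifications $Y$ of a fixed $V$; the entire $Y$-dependence of the symbol therefore sits in the composite $(f,j)_*\circ s_{x_0}\colon H^d_{x_0}(Y_\Nis,j_!F\la d\ra_U)\to F(K)$. To conclude I would invoke the key property of the pushforward from \cite{RS-ZNP}, namely that this composite is independent of $Y$; concretely one dominates two compactifications $Y_1,Y_2$ of $V$ by the closure $Y_3$ of $V$ in $Y_1\times_K Y_2$ and applies the functoriality of $(f,j)_*$ along the proper birational maps $Y_3\to Y_i$. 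Checking that these cited compatibilities apply verbatim to the extension-by-zero sheaf $j_!F\la d\ra_U$ is the step that requires the most care.
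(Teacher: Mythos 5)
Your proposal is correct and follows essentially the same route as the paper: independence of $Y$ via a dominating compactification together with the functoriality of $(f,j)_*$ along proper maps and the factorization of $c_{\ux}$ through $H^d_{x_0}$, independence of $U$ via the compatibility of $(f,j)_*$ with $j'_!\to j_!$, and independence of $V$ by passing to a common quasi-projective refinement. Your treatment of part (2) — realizing cup product with the $H^0$-class $a$ as the sheaf morphism $m_a$ and invoking naturality of $c_{\ux}$ — is just an unwinding of the cited compatibility of localization boundary maps with cup products, so no genuinely different argument is involved.
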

\begin{proof}
\ref{lem:HLS1}. We start by showing the independence of the choice of the projective compactification of $V$. 
Thus assume we have two projective compactifications $j: U\inj V\inj Y$ and $j':U\inj V \inj Y'$ and denote by 
$f:Y\to \Spec K$ and $f':Y'\to \Spec K$ the projective structure maps. 
It suffices to consider the situation, where we have a projective morphism $g:Y'\to Y$
such that $g\circ j'=j$ and $f\circ g=f'$. In this case the independence follows from the following commutative diagram
\[\xymatrix{
  F\la d\ra(K^h_{X,\ux}) \ar[r]^-{c_{\ux}} \ar[rd]_-{c_{\ux}} &
  H^d(Y_\Nis,j_! F\la d\ra_U) \ar[d]\ar[r]^-{(f, j)_*}  & F(K) \\
 &H^d(Y'_\Nis, j'_! F\la d\ra_U)\ar[ru]_-{(f',j')_*},}\]
where the vertical map in the middle is induced by the natural map $j_!\to Rg_* j'_!$, see \cite[(4.3.3)]{RS-ZNP}.
The right triangle commutes by \cite[Lemma 4.7(3)]{RS-ZNP}. The left triangle commutes since 
both maps labeled $c_{\ux}$ factor over $H^d_{x_0}(V, F\la d \ra_V)$.

Next we show the independence of the choice of $U$.
 By the above, it  suffices to consider a dense open immersion $\nu: U'\inj U$  with projective compactifications 
$j: U\inj V\inj   Y$ and $j'=j\circ\nu: U'\inj U\inj V\inj Y$. In this case the independence 
 follows from the commutative diagram 
\[\xymatrix{
 F\la d\ra(K^h_{X,\ux}) \ar[r]^-{c_{\ux}} \ar[rd]_-{c_{\ux}} &
 H^d(Y_\Nis,j'_! F\la d\ra_{U'}) \ar[d]_-{\rm nat.}\ar[r]^-{(f,j')_*}  & F(K) \\
& H^d(Y_\Nis,j_! F\la d\ra_U)\ar[ru]_-{(f,j)_*}.}\]
Here the commutativity of the right triangle holds by \cite[Lemma 4.7(2)]{RS-ZNP} and the one of the left triangle is obvious.

It remains to check the independence of the choice of $V$. To this end, let $V,V'\subset X$ be two open quasi-projective subschemes 
containing $x_0$,  let $V\inj Y\to \Spec K$  and  $V'\inj Y'\to \Spec K$ be projective compactifications,
and let $U\subset V$ and $U'\subset V'$ be two open regular subschemes.
We obtain the open immersions $U\inj Y$ and $U'\inj Y'$.
Let $V''\subset V\cap V'$ be an affine open neighborhood of $x_0$  and let $U''\subset U\cap U'\cap V''$ be a dense open regular subscheme.
We have two induced open immersions $U''\inj Y$ and $U'\inj Y'$.
Denote by $\eqref{defn:HLS1}_{(U,V,Y)}$ the pairing \eqref{defn:HLS1} constructed using $U\inj V\inj Y\to \Spec K$.
Then 
\[\eqref{defn:HLS1}_{(U,V,Y)}=\eqref{defn:HLS1}_{(U'',V,Y)}= \eqref{defn:HLS1}_{(U'',V'',Y)}=\eqref{defn:HLS1}_{(U'',V'',Y')},\]
where the first equality holds by the independence of the choice of $U$ proven above, the second equality holds by definition of the pairing (it only depends on the maps
$U\inj Y\to \Spec K$), and the third equality
holds by the independence of the choice of the compactification of $V''$ proven above. This together with the analog reasoning for
$(U',V',Y')$ instead of $(U,V,Y)$, implies $\eqref{defn:HLS1}_{(U,V,Y)}=\eqref{defn:HLS1}_{(U',V',Y')}$.

\ref{lem:HLS2}. This follows form the compatibility of the boundary maps from the localization sequence with
cup-products, see \cite[(6.7.5)]{RS-ZNP}
\end{proof}

\begin{prop}\label{prop:HLS}  
The pairings \eqref{defn:HLS1} satisfies  the following properties for all $a\in F(K(X))$:

\begin{enumerate}[label= (HS\arabic*)]
\item\label{HS1}
Let $X\hookrightarrow X'$ be an open immersion where $X'$ is an integral $K$-scheme of dimension $d$. Then for all $\beta\in K^M_d(K^h_{X,\ux})$
\[ (a,\beta)_{X/K,\ux}=(a,\beta)_{X'/K,\ux},\]

\item\label{HS2} Let $\ux=(x_0,\ldots, x_d)\in\mc(X)$, and $X_{d-1}\subset X$ be the closure 
of $x_{d-1}$, and set $\ux'=(x_0,\ldots, x_{d-1})\in \mc(X_{d-1})$.
Then for all $\beta\in K^M_d(K^h_{X,\ux})$
\[(a, \beta)_{X/K, \ux} =\begin{cases} 
      \beta\cdot \Tr_{K(X)/K}(a), & \text{if }d=0,\\
     (a(x_{d-1}), \partial_{\ux}\beta)_{X_{d-1}/K, \ux'}, & \text{if } d\ge 1 \text{ and } a\in F(\sO_{X, x_{d-1}}),
\end{cases}\]
where 
$a(x_{d-1})\in F(K(X_{d-1}))$ is the restriction of $a$ and $\partial_{\ux}$ is defined in \eqref{para:hltame0}, and $\Tr_{K(X)/K}: F(K(X))\to F(K)$ is the trace for the finite map $\Spec K(X)\to \Spec K$.

\item\label{HS3} Let $D\subset X$ be a closed subscheme such that $X\setminus D$ is nonempty and regular.
Assume $a\in F_{\gen}(X,D)$. Then, for $\ux=(x_0,\ldots, x_d)\in \mc(X)$ and $\ux'=(x_0,\ldots, x_{d-1})\in \mc_d(X)$,
we have (See \ref{para:Vbar} for the definition of $\Vb_{d, X|D}$) 
\[(a, \beta)_{X/K,\ux}=0,\quad \text{for all }  \beta\in  (\Vb_{d, X|D})^h_{\ux'}.\]

\item\label{HS4} 
Let $\ux'\in \mc_r(X)$ with $0\leq r\leq d-1$. Then for all $\beta\in K^M_d(K^h_{X,\ux'})$
\[(a, \iota_y\beta)_{X/K,  \ux'(y)}=0,\quad \text{for almost all } y\in \b(\ux').\]
If either $r\ge 1$ or $r=0$, $X$ is quasi-projective, and the closure of $x_1$ in $X$ is projective over $K$, where $\ux'=(x_1,\ldots, x_d)$, 
then
\[\sum_{y\in b(\ux')} (a, \iota_y\beta)_{X/K,  \ux'(y)}=0, \]
where $\iota_y: K^h_{X,\ux'} \to K^h_{X,{\ux'(y)}}$ is the map \eqref{para:hlr2}.
\end{enumerate}
Furthermore, the family 
\[\bigg\{(-,-)_{X/K,\ux}: F(K(X))\otimes K^M_{\dim X}(K(X))\to F(K)\mid \ux\in\mc(X)\bigg\}_{ \dim X\le d},\]
where $X$ is running through all integral schemes of finite type of dimension $\le d$,
is uniquely determined by \ref{HS1}-\ref{HS4}. 
\end{prop}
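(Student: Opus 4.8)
The plan is to prove uniqueness by induction on $d = \dim X$, using \ref{HS2} to reduce the dimension and \ref{HS4} together with Proposition \ref{prop:mapNisCoh} to pin down the remaining freedom. Suppose we are given two families of pairings, both satisfying \ref{HS1}--\ref{HS4}; write $(-,-)$ and $(-,-)'$ for them and set $\delta_{X,\ux} := (-,-)_{X/K,\ux} - (-,-)'_{X/K,\ux}$, which is again a bilinear pairing $F(K(X))\otimes K^M_d(K(X))\to F(K)$ satisfying all four properties (they are all $\Z$-linear conditions). The goal is to show every $\delta_{X,\ux}$ vanishes.

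First I would settle the base case $d=0$. Here $X = \Spec K(X)$ with $K(X)/K$ finite, $\mc(X) = \{(x_0)\}$, and \ref{HS2} forces $(a,\beta)_{X/K,(x_0)} = \beta\cdot \Tr_{K(X)/K}(a)$ with no choices involved; hence $\delta$ is identically zero. Next, for the inductive step, fix $d\ge 1$ and assume uniqueness is known in all dimensions $<d$. The key reduction is \ref{HS2}: for $\ux=(x_0,\ldots,x_d)\in\mc(X)$ and any $a\in F(K(X))$ that happens to lie in $F(\sO_{X,x_{d-1}})$, the value $(a,\beta)_{X/K,\ux}$ equals $(a(x_{d-1}),\partial_\ux\beta)_{X_{d-1}/K,\ux'}$, where $X_{d-1}=\ol{\{x_{d-1}\}}$ has dimension $d-1$ and $\ux'\in\mc(X_{d-1})$. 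By the inductive hypothesis this $(d-1)$-dimensional symbol is already uniquely determined, so $\delta_{X,\ux}(a,\beta)=0$ whenever $a\in F(\sO_{X,x_{d-1}})$ \emph{and} $\beta$ lies in the image of $\iota$ from $K^M_d(K^h_{X,(x_0,\ldots,x_{d-1})})$ (so that $\partial_\ux\beta$ makes sense via \eqref{para:hltame0}); property \ref{HS1} lets us pass freely between $X$ and any open subscheme so that the local computation is legitimate.

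The main work, and the step I expect to be the main obstacle, is to promote this partial vanishing to vanishing of $\delta_{X,\ux}(a,\beta)$ for \emph{all} $a\in F(K(X))$ and all $\beta\in K^M_d(K(X))$. The idea is to run the reciprocity law \ref{HS4} in the lowest break position $r=0$: for a fixed $a$ and a fixed $\beta\in K^M_d(K(X))$, choose a break chain $\ux'\in\mc_0(X)$ and sum over the finitely many $y\in\b(\ux')$. For the break chains with $r\ge 1$ one can also vary along the chain, and the combination of \ref{HS4} at all break positions is exactly the image condition of Proposition \ref{prop:mapNisCoh}. Concretely, I would argue that the functional $a\mapsto \delta_{X,\ux}(a,\beta)$ factors through the map $c_{\ux}\colon K^M_d(K^h_{X,\ux})\to H^d(X_\Nis,\dots)$: the vanishing established above says $\delta$ kills the kernel-generating relations coming from $\partial$, while \ref{HS4} supplies precisely the collection of reciprocity relations that, by Proposition \ref{prop:mapNisCoh}, characterize which tuples of functionals descend to a single functional on $H^d$. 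Since any two descents of the \emph{same} system of relations agree, and since both families induce the same pairing \eqref{para:pairing2} (this is where \ref{HS3}, via Lemma \ref{lem:HLS}\ref{lem:HLS2} and Theorem \ref{thm:mod-pairing}, identifies the symbol with the global cohomological pairing on $0$-cycles through \eqref{para:pairing3}), the two descents coincide and $\delta_{X,\ux}=0$.

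The delicate point in this last step is the reduction to $0$-cycles: one must check that a $\beta\in K^M_d(K(X))$ can be approximated, in the relevant Nisnevich stalks, by symbols supported at closed points where $\delta$ is already controlled by \eqref{para:pairing3}, and that the moving-lemma-type surjectivity of \eqref{para:pairing2.6}--\eqref{para:pairing2.6} together with Corollary \ref{cor:approx} lets one replace an arbitrary $\beta$ by such a cycle-theoretic representative modulo the relations already known to be annihilated. Assembling these — the dimension induction via \ref{HS2}, the relation bookkeeping via \ref{HS4} and Proposition \ref{prop:mapNisCoh}, and the cycle-level normalization via \ref{HS3} and \eqref{para:pairing3} — yields that $\delta$ vanishes on all of $F(K(X))\otimes K^M_d(K(X))$, proving uniqueness.
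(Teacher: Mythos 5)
Your overall skeleton (induction on $\dim X$, base case pinned down by \ref{HS2}, dimension reduction via \ref{HS2}, reciprocity via \ref{HS4}, approximation via Corollary \ref{cor:approx}) matches the paper's, but the crucial step --- treating a general $a\in F(K(X))$ that is \emph{not} regular at $x_{d-1}$ --- is where your argument has a genuine gap. You propose to descend the difference functional to $H^d(X_\Nis, j_!K^M_{d,U})$ via Proposition \ref{prop:mapNisCoh} and then conclude because ``both families induce the same pairing \eqref{para:pairing2}'', citing Lemma \ref{lem:HLS}\ref{lem:HLS2} and \eqref{para:pairing3}. Those statements are properties of the \emph{constructed} symbol only; the second family is an abstract collection of pairings satisfying \ref{HS1}--\ref{HS4} and has no a priori relation to $c_{\ux}$, to \eqref{para:pairing2}, or to the action of $0$-cycles as finite correspondences, so invoking that identification for it begs the question. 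Moreover, the partial vanishing you extract from \ref{HS2} plus induction only covers $a\in F(\sO_{X,x_{d-1}})$, and one cannot iterate \ref{HS2} down to closed points along a general chain either, since $a(x_{d-1})$ need not be regular at $x_{d-2}$; so the value of your descended functional on $0$-cycles is not in fact ``already controlled'' by anything you have established.

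The paper closes exactly this gap with a moving argument that uses only the axioms: choose a strict closed subscheme $D\subset X$ with $x_{d-1}\in D$, $X\setminus D$ regular, and $a\in F_{\gen}(X,D)$; by Corollary \ref{cor:approx} (with $f=\id$) replace $\beta$ by $\gamma\in K^M_d(K(X))$ with $\beta-\gamma\in(\Vb_{d,X|D})_{x_{d-1}}$ and $\gamma\in(\Vb_{d,X|D})_u$ for all other $u\in X^{(1)}\cap D$. Then \ref{HS3} gives $(a,\beta)_{X/K,\ux}=\sum_{u\in\b(\ux')\cap D}(a,\gamma)_{X/K,\ux'(u)}$ for the break chain $\ux'=(x_0,\ldots,x_{d-2},x_d)\in\mc_{d-1}(X)$, \ref{HS4} converts this to $-\sum_{u\in\b(\ux')\setminus D}(a,\gamma)_{X/K,\ux'(u)}$, and at those $u$ the section $a$ \emph{is} regular, so \ref{HS2} and the induction hypothesis finish. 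Note also that the relevant break position is $r=d-1$ (varying the codimension-one point, which moves the chain off the singular locus of $a$), not $r=0$ as you suggest: varying the closed point $x_0$ accomplishes nothing for the dimension reduction. Your descent-to-cohomology strategy could likely be repaired, but only by inserting precisely this combination of \ref{HS3} with Corollary \ref{cor:approx}, at which point it collapses to the paper's computation.
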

\begin{proof}
\ref{HS1} follows from Lemma \ref{lem:HLS}\ref{lem:HLS1}.
\ref{HS2}. For $d=0$ this follows from the fact that in this case the pushforward $(f,j)_*$  appearing in Definition 
\ref{defn:HLS} is the pushforward along the finite map $f:\Spec K(X)\to \Spec K$ constructed in \cite{BRS},
which is equal to the trace by \cite[Proposition 8.10(3)]{BRS}.
Now assume $d\ge 1$ and take $a\in  F(\sO_{X, x_{d-1}})$,  $\beta\in K^M_d(K^h_{X,\ux})$.
By \ref{HS1} we may assume that $X\to \Spec K$ is projective. 
We find a closed subscheme  $D\subset X$ with $x_{d-1}\not\in D$, such that $U=X\setminus D$ is regular
and $a\in F_{\gen}(X,D)$. Denote by  $i:Y:=X_{d-1}=\ol{\{x_{d-1}\}}\inj X$ the closed immersion.
Choose the closed subscheme $E\subset Y$ as in Lemma \ref{lem4;recpairing}
and choose a regular open  $U'\subset (Y\cap U)\setminus E$ which is dense in $Y$.
Since the map $Z_0(U')\to H^{d-1}(Y_\Nis, V_{d-1, Y|E})$ from \eqref{para:mod-pairing0.1} is surjective,
 we find a cycle $\zeta\in Z_0(U')$ which maps to $c_{\ux'}(\partial_\ux(\beta))$; we can view $\zeta$ as a finite correspondence from $\Spec K$ to $U'$.
We denote by 
\[[\zeta]=c_{\ux'}(\partial_\ux(\beta))\in H^{d-1}(Y, j'_!K^M_{d-1, U'})\quad \text{and}\quad  [i_*\zeta]\in H^{d}(X, j_! K^M_{d, U})\]
the images of $\zeta$ and $i_*\zeta\in Z_0(U)$ under the cycle maps \eqref{para:pairing2.6}. We compute
\begin{align*}
(a,\beta)_{X/K,\ux}&= (a,c_{\ux}(\beta))_{U\subset X/K}, & &\text{by Lem. \ref{lem:HLS}\ref{lem:HLS2},} \\
                      &= (a,c_{\ux}(\beta))_{(X,D)/K},& & \text{by Thm. \ref{thm:mod-pairing}\ref{thm:mod-pairing1},}\\
                      &= (a,i_*(c_{\ux'}\partial_{\ux}(\beta)))_{(X,D)/K}, & & \text{by \eqref{gysinKM3},}\\
                      &= (a,i_*[\zeta])_{U\subset X/K}, & & \text{by Thm. \ref{thm:mod-pairing}\ref{thm:mod-pairing1},}\\
                      &= (a, [i_*\zeta])_{U\subset X/K}, & & \text{by \eqref{gysinKM2}},\\
                      &=(i_*\zeta)^*a, & & \text{by \eqref{para:pairing3}},\\
                      &={\zeta}^*i^*a, & & \text{by defn of corr. action},\\
                      &= (a(x_{d-1}), [\zeta])_{U'\subset Y/K}, & & \text{by \eqref{para:pairing3}},\\
                      &=(a(x_{d-1}), \partial_{\ux}(\beta))_{Y/K,\ux'}, & &\text{by Lem. \ref{lem:HLS}\ref{lem:HLS2}.}
\end{align*}
This yields \ref{HS2}. Property \ref{HS3} follows from Lemma \ref{lem:HLS}\ref{lem:HLS2},  
Theorem \ref{thm:mod-pairing}\ref{thm:mod-pairing1}, \eqref{para:Vbar0}, and the vanishing of \eqref{para:hlc4}.
For \ref{HS4} in the case $r\ge 1$ choose a quasi-projective open $V\subset X$ which contains the closed point $x_0$ of $\ux'$, and take 
a projective compactification $V\inj Y$. Note that $\ux'$ also defines a chain on $Y$ and the set $\b(\ux')$ does not change when we consider $\ux'$ as a chain on 
$X$ or $Y$. Hence in this case the statement  follows directly from  Definition \ref{defn:HLS} and  \eqref{para:hlc3.5} applied to $F=j_!K^M_{d,U}$, 
where $j:U\inj Y$ is the inclusion of a dense open regular subscheme. 
In the case $r=0$, we can take a projective compactification $X\inj \ol{X}$ and  view $\ux'$ as a chain on $\ol{X}$.
By the assumption that the closure of $x_1$ in $X$ is projective, the set $\b(\ux')$ does not change when we consider 
$\ux'$ as a chain on $X$ or on $\ol{X}$. Hence the statement follows from $\eqref{para:hlc3.5}$ also in this case.

It remains to prove the uniqueness part. Let $\{\la-,-\ra_{X/K,\ux}\mid \ux\in\mc(X)\}_{\dim X\le d}$ be another
family of symbols satisfying \ref{HS1}-\ref{HS4}. By  \ref{HS1} it suffices to show 
$\la-,-\ra_{X/K,\ux}=(-,-)_{X/K,\ux}$ for all affine $K$-schemes $X$; 
applying \ref{HS1} again we may assume $X$ is projective.
We proceed by induction. If $\dim X=0$  the symbol is uniquely determined by \ref{HS2}.
Now we assume $\dim X=d\ge 1$ and 
the symbols coincide on all closed subschemes of $X$ of dimension strictly smaller than $d$.
Set $L:=K(X)$.
Let $a\in F(L)$, $\beta\in K^M_d(L)$, and $\ux=(x_0,\ldots, x_d)\in \mc(X)$. 
Let $D\subset X$ be a strict closed subscheme such that $X\setminus D$
is regular, $x_{d-1}\in D$, and $a\in F_{\gen}(X,D)$. 
By Corollary \ref{cor:approx} (with $f=\id$)
we find an element $\gamma\in K^M_d(L)$ such that 
$\beta-\gamma\in (\Vb_{d, X|D})_{x_{d-1}}$ and 
$\gamma\in (\Vb_{d, X|D})_u$, 
for all $u\in X^{(1)}\cap D\setminus\{x_{d-1}\}$.
Set $\ux'=(x_0,\ldots, x_{d-2}, x_d)\in \mc_{d-1}(X)$. Then
\mlnl{(a,\beta)_{X/K,\ux}\stackrel{\ref{HS3}}{=}\sum_{u\in \b(\ux')\cap D}(a,\gamma)_{X/K,\ux'(u)}
\stackrel{\ref{HS4}}{=}-\sum_{u\in \b(\ux')\setminus D} (a,\gamma)_{X/K,\ux'(u)}\\
\stackrel{\ref{HS2}}{=} 
-\sum_{u\in \b(\ux')\setminus D} (a(u),\partial_{\ux'(u)}\gamma)_{\ol{\{u\}}/K,(x_0,\ldots x_{d-2}, u)}.}
The same computation with $\la-,-\ra_{X/K,\ux}$ and induction yields the desired equality.
\end{proof}

The formulation of the above proposition was inspired by the treatment of local symbols in \cite[III, \S1]{Serre-GACC}.
But note that the construction is completely different. The next proposition is however
a formal consequence of \ref{HS1}-\ref{HS4} (and properties of Milnor $K$-theory)
in the same way \cite[III, Proposition 4]{Serre-GACC} is a consequence of the properties written in Definition 2 
of {\em loc. cit.}
\begin{prop}\label{prop:HS5}
Let $f: Y\to X$ be a projective and surjective  $K$-morphism between two integral $K$-schemes of the same dimension $d$.
Then we have for all $a\in F(K(X))$ and $\beta\in K^M_d(K(Y))$ and $\ux\in\mc(X)$
\[\tag*{(HS5)}\label{HS5} \sum_{\substack{\uy\in \mc(Y)\\ f(\uy)=\ux}}(f^*a, \beta)_{Y/K,\uy}=(a, f_*\beta)_{X/K,\ux},\]
where $f_*=\Nm_{K(Y)/K(X)}: K^M_d(K(Y)) \to K^M_d(K(X))$ is the norm map.
\end{prop}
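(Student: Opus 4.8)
The plan is to deduce (HS5) as a formal consequence of the characterizing properties \ref{HS1}--\ref{HS4}, mirroring the proof of \cite[III, Proposition 4]{Serre-GACC}, but without re-opening the construction. I would first reduce to a clean geometric situation: by \ref{HS1} I may shrink $X$ to an affine (then, applying \ref{HS1} once more after compactifying, to a projective) integral $K$-scheme, and replace $Y$ by the preimage, keeping $f$ projective and surjective of relative dimension $0$ on a dense open. The strategy is then induction on $d=\dim X$. The base case $d=0$ is \ref{HS2}: there $f_*$ is the norm on $K^M_0=\Z$, i.e.\ multiplication by the generic degree, and both sides compute a trace, so the identity follows from the transitivity of traces $\Tr_{K(Y)/K}=\Tr_{K(X)/K}\circ\Tr_{K(Y)/K(X)}$ together with the projection-type formula relating $\beta\cdot\Tr$ to the norm.

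For the inductive step I would run exactly the approximation argument used at the end of the proof of Proposition \ref{prop:HLS}. Fix $\ux=(x_0,\ldots,x_d)\in\mc(X)$, choose a strict closed $D\subset X$ with $X\setminus D$ regular, $x_{d-1}\in D$, and $a\in F_{\gen}(X,D)$, and set $D_Y=D\times_X Y$. Using Corollary \ref{cor:approx} (applied to the finite surjective $f$, which is the reason that corollary is stated for arbitrary $f$ rather than the identity), I obtain a single $\gamma\in K^M_d(K(Y))$ which is congruent to $\beta$ modulo $(\Vb_{d,Y|D_Y})_y$ at every $y$ over $x_{d-1}$, lies in $(\Vb_{d,Y|D_Y})_y$ at the other relevant codimension-one points over $D$, and whose norm $\Nm(\gamma)$ simultaneously approximates $\Nm(\beta)=f_*\beta$ in $(\Vb_{d,X|D})_{x_{d-1}}$ and lands in $(\Vb_{d,X|D})_u$ at the remaining points $u\in X^{(1)}\cap D\setminus\{x_{d-1}\}$. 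With this $\gamma$ I set $\ux'=(x_0,\ldots,x_{d-2},x_d)\in\mc_{d-1}(X)$ and expand each side: on the left, \ref{HS3} (the vanishing on $\Vb_{d,Y|D_Y}$-stalks) lets me replace $\beta$ by $\gamma$ and restrict the sum over $\uy$ to those breaking at points of $D_Y$, then \ref{HS4} converts this into a sum over $\b$-points away from $D_Y$, and finally \ref{HS2} rewrites each term as a symbol on the $(d-1)$-dimensional closure $\ol{\{y\}}$. The right side undergoes the identical three-step manipulation with $\Nm(\gamma)=f_*\gamma$ in place of $\gamma$.

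The crux is then to match the two resulting sums term-by-term through the induction hypothesis. After the reductions, the relevant terms are indexed by points $u\in\b(\ux')\setminus D$ on the $X$-side and by points $v$ over such $u$ on the $Y$-side; the restriction $f$ induces, on the generic fibers of the closures, a finite map $\ol{\{v\}}\to\ol{\{u\}}$ of integral $(d-1)$-dimensional $K$-schemes, and the tame-symbol maps $\partial$ are compatible with the norm $\Nm$ precisely as encoded by the commutative square \eqref{gysinKM3} and the functoriality of $\partial_{\ux}$ in \ref{para:hltame}. Thus $\sum_{v\mapsto u}\partial_{(\ux'(u))_Y}\gamma$ maps under the fiberwise norm to $\partial_{\ux'(u)}(f_*\gamma)$, and the induction hypothesis (HS5) in dimension $d-1$, applied to each $\ol{\{v\}}\to\ol{\{u\}}$ with the restricted section $f^*a(v)=(f^*a)(v)$, identifies the inner $Y$-sum with the corresponding $X$-term. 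I expect the main obstacle to be exactly this bookkeeping: verifying that the norm $\Nm_{K(Y)/K(X)}$ factors compatibly through the local norms $\Nm_{\kappa(s')/\kappa(s)}$ appearing in $\partial_{\ux}$ at each residue-field level, so that the fiberwise decomposition of the global norm matches the decomposition of the chain sums. This is the commutativity of norms with tame symbols along a finite morphism, which must be checked point-by-point over each codimension-one specialization; once that compatibility is in place, the two expansions agree summand-by-summand and the induction closes.
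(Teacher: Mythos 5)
Your proposal follows essentially the same route as the paper's proof: induction on $d$ with the base case settled by \ref{HS2} and transitivity of traces, and the inductive step carried out by choosing $\gamma$ via Corollary \ref{cor:approx}, trading $\beta$ for $\gamma$ with \ref{HS3}, moving the broken chains off $D$ with \ref{HS4}, and matching terms through \ref{HS2}, the compatibility of norms with tame symbols (which in the paper is the content of the displays \eqref{prop:HS52} and \eqref{prop:HS53}, i.e.\ Rost's \textbf{R1c} and \textbf{R3b}, rather than \eqref{gysinKM3}), and the induction hypothesis applied to $\ol{\{v\}}\to\ol{\{u\}}$. The only organizational difference is that the paper first isolates the case $a\in F(\sO_{X,x_{d-1}})$ as a separate first step, proved without any approximation, and then reduces the general case to it; this is exactly the single merged pass you describe, so the content is the same.
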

\begin{proof}
First note that the sum in \ref{HS5} is finite. Indeed given points  $x_i\in X_{(i)}$ and $y_i\in Y_{(i)}$ with $f(y_i)=x_i$, then $f$ induces
a projective and surjective $K$-morphism $f_i: \ol{\{y_i\}}\to \ol{\{x_i\}}$. As source and target of $f_i$ have the same dimension it follows that 
for any $x_{i-1}\in \ol{\{x_i\}}^{(1)}$ the preimage $f_i^{-1}(x_{i-1})$ consists of $1$-codimensional points in $\ol{\{y_i\}}$, in particular it is a 
discrete noetherian  topological space and hence is finite. Thus there are only finitely many maximal chains in $Y$ lying over $\ul{x}$.

To prove the equality in \ref{HS5} we proceed by induction on the dimension $d$.
Set $E:=K(X)$ and $L:=K(Y)$. For $d=0$, \ref{HS5} translates by \ref{HS2} into
\[\beta\cdot\Tr_{L/K}(a)=[L:E]\cdot \beta \cdot \Tr_{E/K}(a) \quad\text{for } a\in F(E), \beta\in \Z,\]
which  holds since $[L:E]\cdot a= \Tr_{L/E}(a)$. 
Now assume $d\ge 1$ and the formula holds in dimension $\le d-1$. 
Write $\ux=(x_0,\ldots,x_d)$. We consider two cases.

{\em 1st case: $a\in F(\sO_{X,x_{d-1}})$.}
Set  $u:=x_{d-1}\in X^{(1)}$ and denote by $X'=\ol{\{u\}}\subset X$ the closure.
Set $\ux'=(x_0,\ldots, x_{d-1})\in\mc(X')$.
We first collect some standard commutative diagrams for Milnor $K$-theory, also to clarify the notation used later;
\eq{prop:HS51}{\xymatrix{
    &
K^M_{d}(K^h_{X,\ux})\ar[r]^{\partial_{\ux}}&
K^M_{d-1}(K^h_{X',\ux'})\\
K^M_d(E)\ar[r]^-{\iota_{u}}\ar@<1ex>[ur]^{\iota_{\ux}} &
K^M_d(K^h_{X, u})\ar[r]^-{\partial_{u}}\ar[u]_{\iota^{u}_{\ux}}&
K^M_{d-1}(K(X')),\ar[u]_{\iota_{\ux'}}
}}
where $\iota$'s are the natural maps and $\partial$'s are induced by the tame symbol, see \ref{para:hltame};
\eq{prop:HS52}{\xymatrix{
     K^M_d(L)\ar[d]_{\Nm_{L/E}}\ar[r]^-{\prod \iota_z} &
     \prod_{\substack{ z\in Y_{d-1} \\f(z)=u}} K^M_d(K^h_{Y, z})\ar[d]^{\sum \Nm_{z/u}}\\
     K^M_d(E)\ar[r]^-{\iota_{u}} &
     K^M_d(K^h_{X, u});
}}
\eq{prop:HS53}{\xymatrix{
K^M_d(K^h_{Y, z})\ar[r]^{\partial_{z}}\ar[d]_{ \Nm_{z/u} }&
K^M_{d-1}(K(z))\ar[d]^{ \Nm_{K(z)/K(u)} }\\
K^M_d(K^h_{X, u})\ar[r]^{\partial_{u}} &
K^M_{d-1}(K(u)),
}}
where $z\in Y_{d-1}$ with $f(z)=u$.
The commutativity of the above diagrams follows from  standard relations in Milnor $K$-theory,
e.g., in \cite[1.]{Rost} see  {\bf R3a} for \eqref{prop:HS51}, {\bf R1c} for \eqref{prop:HS52}, and  {\bf R3b} for \eqref{prop:HS53}.
With this notation we want to show
\[\sum_{\substack{\uy\in \mc(Y)\\ f(\uy)=\ux}}(f^*a, \iota_{\uy}\beta)_{Y/K,\uy}=(a, \iota_{\ux}\Nm_{L/E}\beta)_{X/K,\ux}\]
assuming $a\in F(\sO_{X,u})$ and that the equality holds in smaller dimensions.
We compute
\begin{align*}
(a, \iota_{\ux}\Nm_{L/E}\beta)_{X/K,\ux} & = \big(a(u), \partial_{\ux}(\iota_{\ux}\Nm_{L/E}\beta)\big)_{X'/K,\ux'} & &\text{by \ref{HS2}}\\
                                                             & = \sum_{\substack{z\in Y_{d-1} \\ f(z)=u}} \big(a(u), \iota_{\ux'}\Nm_{K(z)/K(u)}(\partial_z\iota_z \beta)\big)_{X'/K, \ux'}& &
                                                             \text{by (\ref{prop:HS5}.$*$)}\\
                                                             &= \sum_{\substack{z\in Y_{d-1}\\ f(z)=u}} \sum_{\substack{\uy'\in \mc(Y(z))\\ f(\uy')=\ux'}}
                                                                  \big(f_{Y(z)}^*a(u), \iota_{\uy'}\partial_z(\iota_z\beta)\big)_{Y(z)/K,\uy'} & &
                                                                  \parbox[c]{3cm}{\begin{footnotesize} by induction\\ $Y(z)=\ol{\{z\}}$\end{footnotesize}}\\
                                                             &= \sum_{\substack{z\in Y_{d-1} \\ f(z)=u}} \sum_{\substack{\uy'\in \mc(Y(z))\\ f(\uy')=\ux'}}
                                                                  \big(f_{Y(z)}^*a(u), \partial_{(\uy',\xi)}(\iota_{(\uy',\xi)}\beta)\big)_{Y(z)/K,\uy'} & & \text{by \eqref{prop:HS51}}\\
                                                            &= \sum_{\substack{\uy\in \mc(Y)\\ f(\uy)=\ux}} (f^*a, \iota_{\uy}\beta)_{Y/K,\uy} & &\text{by \ref{HS2}.}
\end{align*}
where $\xi$ is the generic point of $Y$.
This completes the proof of the first case.

{\em 2nd case: $a\in F(E)$.} By \ref{HS1} we may assume $X$ to be projective.
Let $D\subset X$ be a closed subscheme such that $X\setminus D$ is regular and $a\in F_{\gen}(X, D)$.
Enlarging $D$ we may assume $x_{d-1}\in D$,  $Y\setminus D_Y$ is regular, and $f^*a\in F_{\gen}(Y, D_Y)$, where $D_Y=Y\times_X D$.
By Corollary \ref{cor:approx} we find an element $\gamma\in K^M_d(L)$
such that 
\begin{enumerate}[label=(\roman*)]
\item\label{HS5i} $\gamma-\beta\in (\Vb_{d,Y|D_Y})_y$,  for all  $y\in Y^{(1)}\cap D_Y$ lying over $x_{d-1}$,
\item\label{HS5ii} $\gamma \in (\Vb_{d,Y|D_Y})_y$, for all $y\in Y^{(1)}\cap D_Y$ not lying over $x_{d-1}$,
\item\label{HS5iii}$\Nm_{L/E}(\gamma-\beta)\in (\Vb_{d,X|D})_{x_{d-1}}$,
\item\label{HS5iv}$\Nm_{L/E}(\gamma) \in (\Vb_{d,X|D})_z$, for all $z\in X^{(1)}\cap D\setminus\{x_{d-1}\}$.
\end{enumerate}
Set $\ux'':=(x_0,\ldots, x_{d-2}, x_d)\in \mc_{d-1}(X)$. We compute 
\begin{align*}
(a, \Nm_{L/E}\beta)_{X/K,x}& = \sum_{z\in \b(\ux'')\cap D}(a,\Nm_{L/E}\gamma)_{X/K, \ux''(z)}& &
\text{by \ref{HS5iii}, \ref{HS5iv}, \ref{HS3}} \\
                & =- \sum_{z\in \b(\ux'')\setminus D} (a,\Nm_{L/E}\gamma)_{X/K, \ux''(z)} & &
                \text{by \ref{HS4}} \\
 & =  -\sum_{z\in \b(\ux'')\setminus D} \sum_{\substack{\uy\in \mc(Y)\\f(\uy)=\ux''(z)}} (f^*a, \gamma)_{Y/K,\uy}& &
 \text{by the 1st case}\\
&= \sum_{z\in \b(\ux'')\cap D}\sum_{\substack{\uy\in \mc(Y)\\f(\uy)=\ux''(z)}} (f^*a, \gamma)_{Y/K,\uy} & &
\text{by \ref{HS4}}\\
&=\sum_{\substack{y\in\mc(Y)\\f(\uy)=\ux}} (f^*a, \beta)_{Y/K, \uy} & & \text{by \ref{HS5i},\ref{HS5ii}, \ref{HS3}.}
\end{align*}
Note that we can apply \ref{HS4} also in the case $d=1$, since $X$ and $Y$ are projective.
This completes the proof of the proposition.
\end{proof}
 
The following corollary will be used in the proof of Proposition \ref{prop:diag} and in  \cite{RS-AS}.
\begin{cor}\label{cor:HS5}
Let $f:Y\to X$ be a dominant and quasi-projective $K$-morphism between integral $K$-schemes 
of the same dimension $d$. Let $\ux=(x_0,\ldots, x_d)\in\mc(X)$ and $u:=x_{d-1}$.
Let $y\in Y^{(1)}$ with $f(y)=u$.
We assume that $f$ induces a projective  morphism 
between the closures of the points $y$ and $u$. 
Then $K^h_{Y, y}$ is finite over $K^h_{X, u}$ (see \ref{para:hlr} for notation)
and  for all $a\in F(K(X))$ and $\beta\in K^M_d(K_{Y,y}^h)$, 
we have
\[\tag*{(HS5$'$)}\label{HS5'}
\sum_{\substack{\uz \in \mc_{d-1}(Y),\, \uz<y\\ f(\uz(y))=\ux}} (f^*a, \beta)_{Y/K,\uz(y)}
=\big(a, \Nm_{y/u}(\beta)\big)_{X/K,\ux},\]
where $\uz<y$ means $z_{d-2}<y$ with $\uz=(z_0,\ldots, z_{d-2}, z_d)$ and
$\uz(y)=(z_0,\ldots, z_{d-2}, y,z_d)$,
and $\Nm_{y/u}: K^M_d(K^h_{Y, y})\to K^M_d(K^h_{X, u})$ is the norm map. 
\end{cor}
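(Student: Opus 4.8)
The plan is to deduce \ref{HS5'} from the global reciprocity law of Proposition \ref{prop:HS5}, isolating the contribution of the fixed point $y$ by means of the vanishing property \ref{HS3}. First I record the finiteness statement: since $f$ is dominant between integral schemes of dimension $d$, the induced map $g\colon\ol{\{y\}}\to\ol{\{u\}}$ is a dominant---hence, being projective, surjective---morphism of integral $(d-1)$-dimensional schemes, so it is quasi-finite at the generic point $y$. Thus $\sO_{Y,y}$ is quasi-finite over $\sO_{X,u}$, and as $\sOh_{X,u}$ is henselian the structure theory of quasi-finite morphisms makes $\sOh_{Y,y}$ finite over $\sOh_{X,u}$; passing to total rings of fractions shows $K^h_{Y,y}$ is finite over $K^h_{X,u}$ and defines $\Nm_{y/u}$. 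By \ref{HS1} I may shrink $X$ to a quasi-projective neighbourhood of $x_0$ (and $Y$ to its preimage); using that $g$ is projective, I may then enlarge $f$ to a projective surjective morphism $\ol f\colon\ol Y\to\ol X$ of integral $K$-schemes for which every maximal chain of $\ol Y$ through $y$ lying over $\ux$ already lies on $Y$. By \ref{HS1} this changes none of the symbols in \ref{HS5'}, so I assume from now on that $f$ itself is projective and surjective.

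Next I fix the test data. Choose a closed subscheme $D\subset X$ with $X\setminus D$ regular, $u\in D$ and $a\in F_{\gen}(X,D)$, and enlarge it so that also $f^*a\in F_{\gen}(Y,D_Y)$, where $D_Y=Y\times_X D$. Applying (the local form of) the approximation in Corollary \ref{cor:approx} at the finitely many codimension-one points of $Y$ over $u$, I produce $\gamma\in K^M_d(K(Y))$ with $\gamma\equiv\beta$ modulo $(\Vb_{d,Y|D_Y})_{y}$ and $\gamma\in(\Vb_{d,Y|D_Y})_{w}$ for every other codimension-one point $w$ of $Y$ over $u$; by \eqref{cor:approx1} its norm satisfies $\Nm_{K(Y)/K(X)}(\gamma)\equiv\Nm_{y/u}(\beta)$ modulo $(\Vb_{d,X|D})_{u}$ and lies in $(\Vb_{d,X|D})_{z}$ at all other codimension-one points $z$ of $D$. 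The point of this choice is that $\gamma$ is, modulo $\Vb$, supported at $y$.

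Now I apply Proposition \ref{prop:HS5} to $f$ with the element $\gamma$ and the chain $\ux$, and read off both sides. On the target, $f_*\gamma=\Nm_{K(Y)/K(X)}(\gamma)$ differs from $\Nm_{y/u}(\beta)$ by an element of $(\Vb_{d,X|D})_{u}$, so \ref{HS3} gives $(a,f_*\gamma)_{X/K,\ux}=(a,\Nm_{y/u}\beta)_{X/K,\ux}$, the right-hand side of \ref{HS5'}. On the source, I decompose $\sum_{f(\uy)=\ux}(f^*a,\gamma)_{Y/K,\uy}$ according to the codimension-one entry of $\uy$, which lies over $u\in D$ and hence in $D_Y$: for entries $\ne y$ one has $\gamma\in\Vb$ there, so these terms vanish by \ref{HS3}, while for the entry $y$ the congruence $\gamma\equiv\beta$ at $y$ together with \ref{HS3} replaces $\gamma$ by $\beta$. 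Since, by the choice of compactification, the chains through $y$ over $\ux$ are exactly the $\uz(y)$ appearing in \ref{HS5'}, the surviving sum is the left-hand side of \ref{HS5'}, and Proposition \ref{prop:HS5} equates the two.

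The main obstacle is the bookkeeping around the distinguished point $y$. Since $\beta$ lies in the henselian-local group $K^M_d(K^h_{Y,y})$ rather than in $K^M_d(K(Y))$, Corollary \ref{cor:approx} must be used in its local form, approximating $\beta$ at the valuation attached to $y$ while forcing the approximant into $\Vb$ at the remaining codimension-one points over $u$; it is this support property that lets \ref{HS3} annihilate all chains but those through $y$. The delicate step is to ensure that the passage to a projective compactification of $f$ introduces no new chain through $y$ lying over $\ux$ in the boundary, which is precisely where the hypothesis that $g\colon\ol{\{y\}}\to\ol{\{u\}}$ is projective (i.e.\ $\ol{\{y\}}$ is proper over $\ol{\{u\}}$) enters, and where Lemma \ref{lem:hlr} is used to compare chains on $Y$ with chains on $\ol{\{y\}}$. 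For the special case $a\in F(\sO_{X,u})$ one may avoid the compactification altogether: \ref{HS2} and the compatibility of the tame symbol with the norm in \eqref{prop:HS51} and \eqref{prop:HS53} reduce both sides of \ref{HS5'} to dimension $d-1$, where Proposition \ref{prop:HS5} applied directly to the projective surjective map $g$ concludes---this is the conceptual core of the argument.
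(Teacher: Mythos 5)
Your proof is correct and follows essentially the same route as the paper: compactify $f$ projectively (using the properness of $\ol{\{y\}}\to\ol{\{u\}}$ to ensure no new chains through $y$ appear), approximate $\beta$ by a global $\gamma\in K^M_d(K(Y))$ supported at $y$ modulo $\Vb$ via Corollary \ref{cor:approx}, and then combine \ref{HS3} on both sides with Proposition \ref{prop:HS5} in the middle. The only point left implicit is the precise form of the ``local'' approximation, which the paper carries out in two steps: apply Corollary \ref{cor:approx} to the base change over $\Spec \sOh_{X,u}$ to get $\gamma'$ in $\bigoplus_z K^M_d(K^h_{\ol{Y},z})$, then lift to $K^M_d(K(Y))$ using the surjection onto $\bigoplus_z K^M_d(K^h_{\ol{Y},z})/(\Vb_{d,\ol{Y}|D_{\ol{Y}}})^h_z$ -- but you correctly identify what this step must accomplish.
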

\begin{proof}
Set $E:=K(X)$ and $L:=K(Y)$. 
Note that $y$ is an isolated point in $f^{-1}(u)$, 
hence $\sO^h_{X,u}\to \sO^h_{Y,y}$ is finite and injective.
Let $Y\inj \ol{Y}\xr{\bar{f}} X$ be a projective compactification of $f$.
Take a closed subscheme $D\subset X$ such that $X\setminus D$ and $\ol{Y}\setminus D_{\ol{Y}}$ are regular,
where  $D_{\ol{Y}}=\ol{Y}\times_X D$, and $a\in F_{\gen}(X, D)$, $\bar{f}^*a\in F_{\gen}(\ol{Y},D_{\ol{Y}})$.
Set $X'=\Spec \sOh_{X,u}$ and denote by 
$\bar{f}': \ol{Y}'\to X'$ the base change of $\bar{f}$. Note that the total fraction ring of $\ol{Y}'$
is equal to $\bigoplus_{z\in \bar{f}^{-1}(u) } K^h_{\ol{Y},z}$.
By Corollary \ref{cor:approx} applied to $\bar{f}'$  and 
$(\beta_z)\in \bigoplus_{z\in \bar{f}^{-1}(u) } K^M_d(K^h_{\ol{Y},z})$ with $\beta_y=\beta$ and $\beta_z=0$ for $z\neq y$,
we find an element $\gamma'\in \bigoplus_{z\in \bar{f}^{-1}(u)}K^M_d(K^h_{\ol{Y},z})$ such that 
\begin{enumerate}[label=(\roman*)]
\item\label{cor:HS51} $\beta-\gamma'\in (\Vb_{d, \ol{Y}| D_{\ol{Y}}})^h_{y}$
\item\label{cor:HS52} $\gamma' \in (\Vb_{d, \ol{Y}| D_{\ol{Y}}})^h_{z}$,  for all 
$z\in \bar{f}^{-1}(u)\setminus \{y\}$,
\item\label{cor:HS53} $\Nm_{y/u}(\beta)-\Nm_{y/u}(\gamma')\in (\Vb_{d, X| D})^h_{u}$,
\item\label{cor:HS54} $\Nm_{z/u}(\gamma')\in (\Vb_{d, X|D})^h_u$,  for all 
$z\in \bar{f}^{-1}(u)\setminus \{y\}$.
\end{enumerate}
We have a surjection
\[ K^M_d(L) \surj \bigoplus_{z\in \bar{f}^{-1}(u)}\frac{K^M_d(K^h_{\ol{Y},z})}{(\Vb_{d, \ol{Y}|D_{\ol{Y}}})^h_z}.\]
Indeed, by Proposition \ref{para:Vbar2} it suffices to show this for  $Y$ normal in which case it follows
from the Approximation Lemma. Thus we find $\gamma\in K^M_d(L)$, so that 
\ref{cor:HS51}-\ref{cor:HS54} holds with $\gamma'$ replaced by $\gamma$.
Set $\ux'=(x_0,\ldots, x_{d-1})$. We compute
\begin{align*}
(a, \Nm_{y/u}(\beta))_{X/K, \ux} &= \sum_{\substack{z\in Y\\ \ol{f}(z)=u}} (a, \Nm_{z/u}(\gamma))_{X/K,\ux}  & &
\text{by \ref{cor:HS53}, \ref{cor:HS54}, \ref{HS3}}\\
&= (a, \Nm_{L/E}(\gamma))_{X/K,\ux}& &  \text{by \eqref{prop:HS52}}\\
 &= \sum_{\substack{\uz\in\mc(\ol{Y})\\ \ol{f}(\uz)=\ux}} (\bar{f}^*a, \gamma)_{\ol{Y}/K,\uz} &  &
\text{by \ref{HS5}}\\
&= \sum_{\substack{\uz\in\mc_{d-1}(\ol{Y}),\, \uz<y\\ \ol{f}(\uz(y))=\ux}} (\bar{f}^*a, \beta)_{\ol{Y}/K,\uz(y)} & &
\text{by \ref{cor:HS51}, \ref{cor:HS52}, \ref{HS3}}\\
&= \sum_{\substack{\uz\in\mc_{d-1}(Y), \, \uz<y\\ f(\uz(y))=\ux}} (\bar{f}^*a, \beta)_{\ol{Y}/K,\uz(y)}, 
\end{align*}
where  the last equality follows from the assumption that the closure of $y$ in 
$\ol{Y}\times_X \ol{\{u\}}$ is already closed in $Y\times_X \ol{\{u\}}$.
This completes the proof of the corollary.
\end{proof}

\begin{exs-rmks}\label{exs-rmks:res}
We fix a function field $K$ over $k$ and $X$ is an integral  scheme of finite type over $K$ and  dimension $d$ and $\ux\in \mc(X)$.
\begin{enumerate}[label=(\arabic*)]
\item\label{exs-rmks:res1} Let $F=\Omega^j_{-/k}$. In \cite[p. 515]{Lo} a residue homomorphism
\[\Res_{X/K,\ux}: \Omega^{j+d}_{K(X)/k}\to \Omega^{j}_{K/k}\]
is defined, which generalizes a construction of Par\v{s}in for surfaces, see \cite[\S 1]{Parsin}. 
(In \cite{Lo} it is denoted by $\Res^f_\ux$, where $f:X\to \Spec K$ is the structure homomorphism.)
We have for all $a\in F(K(X))$ and $\beta\in K^M_d(K(X))$
\eq{exc-rmks:res1.1}{(a,\beta)_{X/K,\ux}=\pm \Res_{X/K,\ux}(a\cdot\dlog \beta),}
where $\pm$ is a universal sign depending on the choice of the sign for the tame symbol and for $\Res$.
Indeed, by the construction in {\em loc. cit.} the right hand side satisfies \ref{HS1} and \ref{HS3}, 
\ref{HS2} (up to sign) holds by Lemma 12 and \ref{HS4} by Theorem 3. 
Thus \eqref{exc-rmks:res1.1} follows from the uniqueness statement in Proposition \ref{prop:HLS}.
Note that the construction of $\Res_{X/K,\ux}$ in \cite{Parsin} and \cite{Lo} are completely different in spirit:
the residue is defined in an explicit way using power series in several variables, property \ref{HS3} holds by construction, 
whereas \ref{HS4} is a theorem. On the other hand for the symbol $(-,-)_{X/K,\ux}$ the
reciprocity law \ref{HS4} follows immediately from the definition whereas \ref{HS3} is implied by Theorem \ref{thm:mod-pairing}, a main result of \cite{RS-ZNP}.
\item It follows from the uniqueness statement that in case $d=1$ the symbol $(-,-)_{X/K,\ux}$ 
agrees with the local symbol for reciprocity sheaves considered in \cite[Proposition 5.2.1]{KSY1}, which
generalizes the local symbol for commutative $k$-groups by Rosenlicht-Serre, see \cite{Serre-GACC}.
\item Let $F=H^1((-)_{\et}, \Q/\Z)$. 
Assume $K=k$ is a finite field. Then \eqref{defn:HLS1} induces a morphism
\[K^M_{d}(K^h_{X,\ux})\to \Hom(F(K^h_{X,\ux}), F(k))=\pi_1^{\rm ab}(K^h_{X,\ux}),\]
where the equality on the right follows from $F(k)=\Q/\Z$. 
By \cite[Proposition 3.3]{KS-GCFT}
this map  decomposes as the product of the maps
\[ K^M_{d}(\Frac(V))\to   \pi_1^{\rm ab}(\Frac(V)),\] 
where $V$ runs through all $d$-discrete valuation rings of $k(X)$ dominating $\ux$ 
(in the sense of \cite[(3.2)]{KS-GCFT}). 
It can be shown that the induced morphisms $K^M_{d}(\Frac(V))\to  \pi_1^{\rm ab}(\Frac(V))$ coincide with
Kato's  higher local reciprocity maps $\Psi_{\Frac(V)}$ constructed in \cite[p. 661]{Kato-LCFTII}
(see \cite[Theorem 3.5]{KS-GCFT} for an explanation of how to deduce the henselian case from the complete case
treated in \cite{Kato-LCFTII}). We leave the verification of this fact to interested readers.
\item For $F=W_n\Omega^j$, the $j$-th de Rham-Witt differentials (see \cite{Il}), we get higher local  symbols
\[(-,-)_{X/K,\ux}: W_n\Omega^j_{K(X)}\otimes_{\Z} K^M_{d}( K^h_{X,\ux})\to W_n\Omega^j_K,\]
which by the above generalizes the residue symbol from the one-dimensional case constructed in 
\cite[\S 2]{Kato-LCFTII} or \cite[2.]{R} (see also \cite{R-Err}).
Since Verschiebung, Frobenius, restriction, and the differential on the de Rham-Witt complex are morphisms of
reciprocity sheaves, it follows that the above symbol is compatible with these.
Note also  that from \cite[\S 2.5, Lemma 12]{Kato-LCFTII} and \cite[Proposition 3.3]{KS-GCFT},
one can define a residue morphism (similar as in \cite{Lo})
\[\Res_{X/K,\ux}: W_n\Omega^{j+d}_{K(X)}\to W_n\Omega^j_{K}.\]
The connection to the residue symbol defined here should be as in \eqref{exc-rmks:res1.1}.
One way to verify this would be to show that $\Res_{X/K,\ux}$ satisfies \ref{HS4}, e.g., using the strategy from \cite{Lo},
 and then use uniqueness (the properties \ref{HS1}-\ref{HS3} are direct to check from the construction).
 We leave the details to interested readers.
\item Let $F=G$ be a commutative $k$-group. We obtain a pairing
\[(-,-)_{X/K,\ux}: G(K(X))\otimes_\Z K^M_d( K^h_{X,\ux} )\to G(K).\]
A pairing like this was also defined in \cite[III]{KS-2CFT}.
(In {\em loc. cit.} such a pairing was defined for higher dimensional local fields, but one can use
\cite[Proposition 3.3]{KS-GCFT} to obtain an induced pairing as above.)
In characteristic zero, one can check that the two pairings coincide. Indeed, in this case
it suffices to consider $G=\G_a$, $\G_m$, or an abelian variety. For the $\G_a$-case they are induced by 
$\Res_{X/K,\ux}$ from \ref{exs-rmks:res1} by \cite[p. 144]{KS-2CFT}.
If $G=\G_m$, it is induced for both pairings by an iteration of the tame symbol.
For an abelian variety, \ref{HS2} and the formula in the middle of p. 145 
in \cite{KS-2CFT} show that they coincide. In particular we find that the symbol from {\em loc. cit.} in characteristic zero satisfies the reciprocity theorem \ref{HS4}.
In positive characteristic  we believe that the two pairings coincide, but this remains to be checked.
\item Let $(Y,E)$ be a proper modulus pair (see \ref{para:RSC}).
The presheaf with transfers  $h_0(Y,E)$ is defined in \cite[Definition 2.2.1]{KSY2} and it is a 
reciprocity presheaf by \cite[Theorem 2.3.3]{KSY2}. Thus the Nisnevich sheafification
$F=h_0(Y,E)_\Nis$ is a reciprocity sheaf by \cite[Theorem 0.1]{S-purity}.
By \cite[Theorem 3.2.1]{KSY2}, we have $h_0(Y,E)(K)=\CH_0(Y_K|E_K)$, the Chow group  of zero-cycles with modulus introduced in \cite{Kerz-Saito}.
Therefore \eqref{defn:HLS1} induces a morphism
\[(-,-)_{X/K,\ux}:\CH_0(Y_{K(X)}|E_{K(X)})\otimes_\Z K^M_d(K^h_{X,\ux}) 
\to  \CH_0(Y_K|E_K),\]
satisfying \ref{HS1}-\ref{HS5}.  
If $\alpha\in \CH_0(Y_{K(X)}, E_{K(X)})$
can be represented by a zero-cycle in $Y_{K(X)}\setminus E_{K(X)}$, which spreads out  to a finite correspondence
$\tilde{\alpha}$ from $U$ to $Y\setminus E$ for some smooth open $U\subset X$ {\em containing the closed point $x_0$ of $\ux$},
 then it follows from \ref{HS2}, that we have 
\[(\alpha, \beta)_{X/K,\ux}= \partial_\ux(\beta)\cdot f_*{\tilde{\alpha}}\;\;\text{for $\beta\in K^M_d(K^h_{X,\ux})$},\]
where $\partial_\ux: K^M_d(K^h_{X,\ux})\to K^M_0(\kappa(x_0))=\Z$ is the map \eqref{para:hltame1} and $f: U\times (Y\setminus E) \to U$ is the projection.
\end{enumerate}
\end{exs-rmks}

\section{Characterization of the modulus via higher local symbols}\label{sec:mod-LS}
In this section $k$ is a perfect field and $F\in\RSC_\Nis$.
The main result of this section is the following.

\begin{thm}\label{thm:LS}
Let $(X, D)$ be a modulus pair  (see \ref{para:RSC}) 
with $X$ of pure dimension $d$. Let $U=X\setminus|D|$ and $a\in F(U)$.
For a function field $K/k$ denote by $X_K=X\otimes_k K$ the base change and by $a_K\in F(U_K)$ the pullback of $a$.
Let $W\subset |D|$  be a set of closed points which contains at least one  point of every  irreducible component of $|D|$.
Consider the following conditions (see Definition \ref{defn:HLS}):
\begin{enumerate}[label=(\roman*)]
\item\label{thm:LS1} $a\in \tF(X,D)$.
\item\label{thm:LS2} 
For any function field $K$ and $\ux=(x_0,\ldots, x_{d})\in \mc(X_K)$ we have 
\[(a_K,\beta)_{X_K/K,\ux}=0,\quad \text{for all } \beta\in (V_{d, X_K|D_K})^h_{(x_0,\ldots, x_{d-1})}.\]
\item\label{thm:LS3} For any function field $K$ and $\ux=(x_0,x_1,\ldots, x_d)\in \mc(X_K)$ with $x_0\in W_K$ and $x_{d-1}\in D_K$, we have
\[(a_K,\beta)_{X_K/K,\ux}=0,\quad \text{for all } \beta\in (V_{d, X_K|D_K})_{x_{d-1}}.\]
\end{enumerate}
Then, we have the implication 
$\ref{thm:LS1}\Longrightarrow \ref{thm:LS2}\Longrightarrow \ref{thm:LS3}$.
Assume furthermore that there exists an open dense immersion $X\inj \Xb$ into a smooth and projective $k$-scheme, such that 
$\Xb\setminus U$ is an SNCD, then all the above statements are equivalent.
\end{thm}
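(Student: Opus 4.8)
The plan is to prove the two routine implications $\ref{thm:LS1}\Rightarrow\ref{thm:LS2}\Rightarrow\ref{thm:LS3}$ directly from the formal properties \ref{HS2}--\ref{HS4}, and then to close the loop under the compactification hypothesis by reducing $\ref{thm:LS3}\Rightarrow\ref{thm:LS1}$ to the equality $\tF=F_{\gen}$ of Theorem \ref{thm:mod-pairing}\ref{thm:mod-pairing2}. For $\ref{thm:LS1}\Rightarrow\ref{thm:LS2}$ I would argue that $a\in\tF(X,D)$ base-changes to $a_K\in\tF(X_K,D_K)\subseteq F_{\gen}(X_K,D_K)$ (the inclusion is a codimension-one condition, so it holds without properness), and then apply \ref{HS3} to the break chain $\ux'=(x_0,\ldots,x_{d-1})\in\mc_d(X_K)$: it gives the vanishing on $(\Vb_{d,X_K|D_K})^h_{\ux'}$, and since the symbol factors through $K^M_d(K^h_{X_K,\ux})$ and the map $V_{d,X_K|D_K}\to\Vb_{d,X_K|D_K}$ is compatible with the inclusions into Milnor $K$-theory, the image of $(V_{d,X_K|D_K})^h_{\ux'}$ is contained in that of $(\Vb_{d,X_K|D_K})^h_{\ux'}$, whence \ref{thm:LS2}. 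The implication $\ref{thm:LS2}\Rightarrow\ref{thm:LS3}$ is then formal: the Zariski stalk $(V_{d,X_K|D_K})_{x_{d-1}}$ maps to the Nisnevich stalk $(V_{d,X_K|D_K})^h_{(x_0,\ldots,x_{d-1})}$ with the same image in $K^M_d(K^h_{X_K,\ux})$, so \ref{thm:LS3} is a special case of \ref{thm:LS2}.

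For the hard direction $\ref{thm:LS3}\Rightarrow\ref{thm:LS1}$ I would first exploit the compactification. Extend $D$ to a Cartier divisor $\Db$ on $\Xb$ with $\Db|_X=D$ and $|\Db|=\Xb\setminus U$, taking the multiplicities along the boundary $\Xb\setminus X$ large; since $(\Xb\setminus U)_{\red}$ is an SNCD and $\Xb$ is smooth, $\Db$ is SNC and $(\Xb,\Db)$ is a proper SNCD modulus pair compactifying $(X,D)$. As $\tF(\Xb,\Db)\subseteq\tF(X,D)$ (both are subgroups of $F(U)$, and $(\Xb,\Db)$ occurs in the defining colimit), it suffices to show $a\in\tF(\Xb,\Db)$. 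By Theorem \ref{thm:mod-pairing}\ref{thm:mod-pairing2} this group equals $F_{\gen}(\Xb,\Db)$, so I only need to verify its defining conditions at the codimension-one points $y\in\Xb^{(1)}\cap|\Db|$, namely that the restriction of $a$ to $\Spec\sO^h_{\Xb,y}\setminus\{y\}$ lies in $\tF(\sO^h_{\Xb,y},\Db_y)$. At a boundary point $y\in\Xb\setminus X$ this is automatic for the chosen large multiplicity: since $F$ is a reciprocity sheaf the section $a$ has a bounded modulus along $y$, and there are only finitely many boundary components, so a single $\Db$ works. Everything therefore reduces to the conditions at the points $y\in|D|$, i.e. to the implication $\ref{thm:LS3}\Rightarrow a\in F_{\gen}(X,D)$.

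This remaining step is local at a codimension-one point $y$ of $|D|$, where $\sO^h_{X_K,y}$ is a henselian trait with fraction field $K(X)$ and residue variety $\overline{\{y\}}_K$ of dimension $d-1$. The idea is to factor the symbol $(a_K,\beta)_{X_K/K,\ux}$, for a chain $\ux$ through $y=x_{d-1}$ and $\beta\in(V_{d,X_K|D_K})_{x_{d-1}}$, as a one-dimensional local symbol at the trait $\sO^h_{X_K,y}$ (valued in a twist of $F$ on $\overline{\{y\}}_K$) followed by the $(d-1)$-dimensional symbol on $\overline{\{y\}}_K$ attached to the lower chain $(x_0,\ldots,x_{d-2})$; this should come from the twist/pushforward formalism of \cite{RSY,RS-ZNP} underlying Definition \ref{defn:HLS} together with the tame-symbol reduction \ref{HS2}. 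Granting such a factorization, the vanishing in \ref{thm:LS3} for all lower chains and all $K$ forces the one-dimensional local symbol at $y$ to vanish on the relative units, by the faithfulness of the family of $(d-1)$-dimensional symbols (Proposition \ref{prop:mapNisCoh}); and the classical characterization of the modulus by its local symbol on curves (\cite[Proposition 5.2.1]{KSY1}, \cite{Serre-GACC}) then yields $a\in\tF(\sO^h_{X,y},D_y)$. A base-change argument over varying $K$ descends the conditions over all function fields to the conditions over $k$ required for $F_{\gen}(\Xb,\Db)$.

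The main obstacle I expect is precisely this last local implication $\ref{thm:LS3}\Rightarrow a\in F_{\gen}(X,D)$: making the factorization of the higher symbol through a one-dimensional local symbol rigorous, with the bookkeeping of twists arranged so that both sides land in $F(K)$, and establishing the faithfulness needed to strip off the $(d-1)$ residue directions and invoke the one-dimensional theory. I anticipate organizing this as an induction on $d$, with base case $d=1$ supplied by the curve case of the local-symbol characterization of the modulus. By contrast, the reduction to $F_{\gen}$, the automatic treatment of the boundary via the reciprocity-sheaf property, and the two easy implications are comparatively routine once Theorem \ref{thm:mod-pairing}\ref{thm:mod-pairing2} is available.
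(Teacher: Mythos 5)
Your two implications $\ref{thm:LS1}\Rightarrow\ref{thm:LS2}\Rightarrow\ref{thm:LS3}$ are correct and coincide with the paper's argument: the first is \ref{HS3} combined with $\tF(X,D)\subset F_{\gen}(X,D)$ and the comparison of $V_{d,X|D}$ with $\Vb_{d,X|D}$ inside $K^M_d(K^h_{X,\ux})$, and the second is the passage from the Zariski stalk at $x_{d-1}$ to the Nisnevich stalk at the truncated chain. The reduction of the hard direction to a compactified pair $(\Xb,\Db+nY+B)$ with large boundary multiplicities is also in the right spirit.

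The genuine gap is in your treatment of $\ref{thm:LS3}\Rightarrow\ref{thm:LS1}$, and it is not only the unproved factorization of the higher symbol through a one-dimensional local symbol at the trait $\sO^h_{X_K,y}$ (which would require setting up a symbol valued in a twist $F\la 1\ra$ of $F$ on $\ol{\{y\}}_K$ and is nowhere available in the paper). The deeper problem is that your strategy is local at the generic points of $D$, whereas condition \ref{thm:LS3} only provides vanishing for the very restricted family of chains whose closed point $x_0$ lies in the fixed finite set $W_K$. A purely local argument at $y=x_{d-1}$ would need the family of $(d-1)$-dimensional symbols attached to the truncations $(x_0,\ldots,x_{d-2})$ with $x_0\in W_K$ to be jointly faithful on the relevant obstruction group; this fails for an arbitrary affine $\ol{\{y\}}$ and is precisely where a global input is required. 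The paper supplies it as follows: by Theorem \ref{thm:mod-pairing}\ref{thm:mod-pairing2} one must show that the kernel of $H^d(\Xb_{K,\Nis},V_{d,\Xb_K|\Db+nY+B})\to H^d(\Xb_{K,\Nis},V_{d,\Xb_K|\Db+B})$ is generated by the classes $c_{\ux_w}(\beta)$ with $\ux_w$ based at $W_K$ and $\beta\in(V_{d,X_K|D_K})_{x_{d-1}}$ (Claim \ref{claim;thm;HLS0}). This is proved by d\'evissage on $n$: the graded quotient $\sG=V_{d,\Xb_K|\Db+B}/V_{d,\Xb_K|\Db+Y+B}$ is controlled by \cite[Corollary 2.10]{RS-CycleMap}, and the surjectivity of $\bigoplus_{w\in W_K}H^{d-1}_w(Y_\Zar,\sG)\to H^{d-1}(Y_\Zar,\sG)$ on the \emph{proper} divisor $Y=|\Db|\subset\Xb_K$ follows from the Lichtenbaum--Grothendieck vanishing theorem \eqref{para:Hdvan1}, combined with the Zariski analogue of Proposition \ref{prop:mapNisCoh} in \ref{prop:mapNisCoh;rem1} to realize these local cohomology classes by chains. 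Your proposal never invokes this vanishing theorem, and without it the restriction $x_0\in W_K$ cannot be justified; this is where the argument would break down rather than merely remain incomplete.
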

We stress the fact that $(V_{d, X|D})^h_{(x_0,\ldots, x_{d-1})}$ in \ref{thm:LS2} is the limit over all Nisnevich neighborhoods of $(x_0,\ldots, x_{d-1})$
(see \ref{para:hlr}), whereas $(V_{d,X|D})_{x_{d-1}}$ in \ref{thm:LS3} is the Zariski stalk at the one codimensional point  $x_{d-1}\in X^{(1)}$. In section \ref{sec:diag} we will see that in case $D$ is reduced, the assumption on the 
existence of a smooth compactification is superfluous (but still $X$ has to be smooth and $D$ is a SNCD).


\begin{para}\label{para:Hdvan}
Before we prove  Theorem \ref{thm:LS} we recall the following result:

\medskip
\noindent {\em Let $X$ be a separated scheme of finite type over a field $K$ of dimension $d$.
Assume no irreducible component of dimension $d$ of $X$ is  {\em proper}.
Then for any coherent sheaf $\sF$ on $X$, we have }
\[H^d(X_\Zar,\sF)=0.\]
\medskip

This theorem was conjectured by Lichtenbaum and proven by Grothendieck, 
see \cite[Theorem 6.9]{HaLC} for Grothendieck's proof in the quasi-projective case relying on duality theory, see \cite{Kleiman}
for a more elementary proof in the stated generality.
We will use the following consequence (cf. the proof of \cite[Theorem 6.9]{HaLC}):

Let $Y$ be a proper $K$-scheme of dimension $d$. Let $W\subset Y$ be a set of closed points which contains at least one 
closed point of each irreducible component of dimension $d$ of $Y$.
Then the natural map
\eq{para:Hdvan1}{\bigoplus_{z\in W }H^{d}_z(Y_{\Zar},\sF)\to H^{d}(Y_{\Zar},\sF)}
is surjective for all coherent $\sO_{Y}$-modules $\sF$.
Indeed, this holds as $H^d((Y\setminus W)_\Zar,\sF)$ vanishes by the above result.
\end{para}

\begin{proof}[Proof of Theorem \ref{thm:LS}.]
The implication $\ref{thm:LS1}\Rightarrow\ref{thm:LS2}$ follows from \ref{HS3} in Proposition \ref{prop:HLS} and  $\tF(X,D)\subset F_{\gen}(X,D)$,  see \eqref{para:mod-pairing0}.
  The implication $\ref{thm:LS2}\Rightarrow\ref{thm:LS3}$ is immediate from the natural map 
  $(V_{d, X_K|D_K})_{x_{d-1}}\to (V_{d, X_K|D_K})_{(x_0,\ldots, x_{d-1})}^{h}$.
Assume $X$ has  a smooth compactification $\ol{X}$ such that $\ol{X}\setminus U$ is an SNCD.
It remains to show that in this case also $\ref{thm:LS3}\Rightarrow \ref{thm:LS1}$ holds.
Let $\Db\subset\Xb_K$ be the closure of $D_K$. 
By Theorem \ref{thm:mod-pairing}\ref{thm:mod-pairing2} and Lemma \ref{lem:HLS}\ref{lem:HLS2} it suffices  to prove the following:

\begin{claim}\label{claim;thm;HLS0}
Let $B\subset \Xb_K$ be an effective Cartier divisor supported on $\Xb_K\setminus X_K$ and $n\ge 1$.
Set $Y=|\Db+B|$ which we will view as a reduced effective Cartier divisor.
Then the sequence
\begin{multline*}
 \bigoplus_{w\in W_K} \bigoplus_{\ux_w=(w,x_1,\ldots, x_d)} \; (V_{d,X_K|D_K})_{x_{d-1}} 
\to H^d(\Xb_{K,\Nis},V_{d, \ol{X}_K|\Db+nY+B}) \\
\to H^d(\Xb_{K,\Nis}, V_{d, \Xb_K|\Db+B}) \to 0\end{multline*}
is exact. Here, the second sum is over all
$\ux_w=(w,x_1,\ldots,x_{d-1},x_d)\in \mc(X_K)$ such that $x_{d-1}\in D^{(0)}$ and the first map is the sum of the maps
\[ (V_{d, X_K|D_K})_{x_{d-1}}\to (V_{d,X_K|D_K})^h_{\ux_w'} \to
(V_{d,X_K|D_K})^h_{\ux_w}
\to H^d(\Xb_{K,\Nis}, V_{d, \Xb_{K}|\Db+nY+B}) \]
where $\ux_w'=(w,x_1,\dots,x_{d-1})$ for $\ux_w=(w,x_1,\ldots, x_d)$ and
the last map is induced by $c_{\ux_w}$ from \eqref{para:hlc2} noting
$(V_{d,X_K|D_K})^h_{\ux_w}=(V_{d, \Xb_{K}|\Db+nY+B})^h_{\ux_w}=K^h_{X_K,\ux_w}$.
\end{claim}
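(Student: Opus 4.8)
The plan is to deduce the exactness from the long exact cohomology sequence of a short exact sequence of Nisnevich sheaves on $\Xb_K$, after identifying the two maps in the Claim with the boundary map of that sequence and with the map induced by an inclusion of sheaves. Since $Y=|\Db|$, the divisor $nY$ is supported on $Y$, so the inclusion of ideal sheaves $I_{\Db+nY+B}\subset I_{\Db+B}$ induces an inclusion $V_{d,\Xb_K|\Db+nY+B}\hookrightarrow V_{d,\Xb_K|\Db+B}$ whose cokernel $Q$ is supported on $Y$, giving
\[0\to V_{d,\Xb_K|\Db+nY+B}\to V_{d,\Xb_K|\Db+B}\to Q\to 0 .\]
As $\dim Y=d-1$ and the Nisnevich cohomological dimension is bounded by the dimension of the support, $H^d(\Xb_{K,\Nis},Q)=0$; hence the associated long exact sequence shows that the right-hand map of the Claim (induced by the inclusion above) is surjective and that its kernel equals the image of the boundary map $\partial\colon H^{d-1}(\Xb_{K,\Nis},Q)\to H^d(\Xb_{K,\Nis},V_{d,\Xb_K|\Db+nY+B})=:M$. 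It then remains to identify the image of the first map of the Claim with $\Im(\partial)$.

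For the inclusion that this image is contained in the kernel of the second map, write $N:=H^d(\Xb_{K,\Nis},V_{d,\Xb_K|\Db+B})$. Since the maps $c_{\ux_w}$ are functorial in the sheaf and the stalks of $V_{d,\Xb_K|\Db+nY+B}$ and $V_{d,\Xb_K|\Db+B}$ at the maximal chain $\ux_w$ both equal $K^M_d(K^h_{X_K,\ux_w})$, the composite of the first map with $M\to N$ sends $\beta$ to $c_{\ux_w}(\beta)$ computed in $N$. Now $\beta$ is the image of an element of the Zariski stalk $(V_{d,X_K|D_K})_{x_{d-1}}=(V_{d,\Xb_K|\Db+B})_{x_{d-1}}$, so, writing $\ux_w'=(w,x_1,\dots,x_{d-1})\in\mc_d(\Xb_K)$, it lies in the image of $\iota_{x_d}\colon (V_{d,\Xb_K|\Db+B})^h_{\ux_w'}\to(V_{d,\Xb_K|\Db+B})^h_{\ux_w}$; as $c_{\ux_w}$ begins with $\delta_{\ux_w}$ and $\delta_{\ux_w}\circ\iota_{x_d}=0$ by \eqref{para:hlc4}, the class vanishes in $N$. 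This reflects the mechanism: $\beta$ extends across $x_{d-1}$ as a section of $V_{d,\Xb_K|\Db+B}$ but not of $V_{d,\Xb_K|\Db+nY+B}$, so its residue is trivial in $N$ while possibly non-trivial in $M$.

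For the reverse inclusion $\Im(\partial)\subset\text{image}$, I would generate $H^{d-1}(\Xb_{K,\Nis},Q)=H^{d-1}(Y_\Nis,Q)$ by chain contributions through the points of $W_K$. The graded pieces of the modulus filtration $\{V_{d,\Xb_K|\Db+mY+B}\}_{m\ge 0}$ are coherent $\sO_Y$-modules, so $Q$ is filtered by coherent sheaves; consequently $H^{\ast}(Y_\Nis,Q)=H^{\ast}(Y_\Zar,Q)$, and by dévissage from \eqref{para:Hdvan1} together with the fact that $W_K$ meets every irreducible component of $Y$, the map $\bigoplus_{w\in W_K}H^{d-1}_w(Y_\Zar,Q)\to H^{d-1}(Y_\Zar,Q)$ is surjective. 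By the Zariski coniveau spectral sequence (\ref{prop:mapNisCoh;rem1}, cf.\ \eqref{para:hlc3Zar}) each $H^{d-1}_w(Y_\Zar,Q)$ is generated by the classes $c^{Q}_{\uz,0}(\bar\beta)$ for maximal chains $\uz=(w,x_1,\dots,x_{d-1})\in\mc(Y)$ and $\bar\beta\in Q_{x_{d-1}}$, and every such $\bar\beta$ lifts to some $\beta\in(V_{d,X_K|D_K})_{x_{d-1}}$. The proof is then completed by the compatibility
\[\partial\big(s_w\,c^{Q}_{\uz,0}(\bar\beta)\big)=\pm\,c_{\ux_w}(\beta)\qquad(\ux_w=(\uz,x_d)),\]
which exhibits $\Im(\partial)$ as the image of the first map of the Claim.

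The main obstacle is precisely this last compatibility. It is a diagram chase comparing the boundary map $\partial$ of the short exact sequence above, in local cohomology at the chain, with the residue maps $\delta$ defining $c_{\ux_w}$ in \eqref{para:hlc2}: one must check that $\partial$ reinserts exactly the first residue step $\delta_{\ux_w}$, which passes from the generic point $x_d$ to the codimension one point $x_{d-1}\in Y$ and produces the class in $Q$, while the remaining residues for $V_{d,\Xb_K|\Db+nY+B}$ and for $Q$ coincide by naturality of the localization sequences along the closed immersion $Y\hookrightarrow\Xb_K$. A secondary technical point, used above both to restrict the generating set to $W_K$ and to compare Zariski and Nisnevich cohomology, is the coherence of the graded pieces of the modulus filtration, which rests on the Bloch--Kato--Gabber type description of $\gr$ of relative Milnor $K$-theory by differential forms.
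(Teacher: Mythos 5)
Your proposal follows essentially the same route as the paper: a short exact sequence of sheaves whose quotient is supported on $Y$, Grothendieck--Nisnevich vanishing in degree $d$ on the $(d-1)$-dimensional $Y$ for surjectivity, and generation of $H^{d-1}(Y_{\Zar},-)$ by local cohomology classes at the points of $W_K$ via \eqref{para:Hdvan1}, the Zariski coniveau argument of \ref{prop:mapNisCoh;rem1}, and surjectivity on stalks. The only differences are organisational: the paper first reduces to $n=1$ and quotes \cite[Corollary 2.10]{RS-CycleMap} for the exact sequence \eqref{eq1;claim;thm;HLS0} (which packages your Zariski-versus-Nisnevich comparison of the quotient sheaf), whereas you treat general $n$ directly via the coherence of the graded pieces and flag the compatibility of the connecting map with $c_{\ux_w}$ explicitly --- a compatibility the paper likewise leaves implicit.
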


Considering the same claim with $\Db+B$ on the right replaced by $\Db+mY+B$ and $\Db+nY+B$ 
in the middle replaced by $\Db+(m+1)Y+B$,
for $m=0,\dots,n-1$, we are reduced to the case $n=1$.
Let 
\[ \sG=V_{d, \Xb_K|\Db+B}/V_{d,\Xb_K|\Db+Y+B}.\]
It is supported on $Y$, and by \cite[Corollary 2.12]{RS-CycleMap} we have an exact sequence
\eq{eq1;claim;thm;HLS0}{
H^{d-1}(Y_{\Zar},\sG) \to H^d(\Xb_{\Nis},V_{d,\Xb|\Db+Y+B})\to  H^d(\Xb_{\Nis},V_{d,\Xb|\Db+B})\to 0.}
Then, we obtain surjections
\begin{multline*}
\bigoplus_{w\in W_K} \bigoplus_{\ux_w=(w,x_1\ldots, x_d)} (V_{d, X_K|D_K})_{x_{d-1}}\surj \bigoplus_{w\in W_K} \bigoplus_{\ux_w'=(w,x_1\ldots, x_{d-1})} \sG_{x_{d-1}} \\
\xrightarrowdbl{c^{\Zar}_{\ux_w',0}} \bigoplus_{w\in W} H^{d-1}_{w}(Y_{\Zar},\sG)\surj H^{d-1}(Y_{\Zar},\sG).\end{multline*}
where the last map is surjective due to \eqref{para:Hdvan1} and $c^{\Zar}_{\ux_w',0}$ is the map \eqref{para:hlc3Zar}
and it is surjective by Remark \ref{prop:mapNisCoh;rem1}. 
This proves Claim \ref{claim;thm;HLS0} and hence the theorem.
\end{proof}

\begin{rmk}
If  $F=W_n$  is the sheaf  of $p$-typical Witt vectors of length $n$, where $p={\rm char}(k)$,
the equivalence $\ref{thm:LS1}\Leftrightarrow \ref{thm:LS3}$ in Theorem \ref{thm:LS} is reminiscent
of \cite[Proposition 7.5]{Kato-Russell} (the case ${^\flat {\rm fil}^F_m}$). Though in {\em loc. cit.}
$k$ is assumed to be algebraically closed and  it is not necessary to consider all function fields $K/k$.
\end{rmk}

\begin{defn}\label{def;FLS}
For $(X,D)\in \ulMCor$ with $U=X-|D|$, We define
\[F^{\rm LS}(X,D):=\left\{a\in F(U)\,\Bigg\vert\,
\begin{minipage}{9.5cm}
$(a_K,\beta)_{X_K/K,\ux}=0$ \text{ for } $\forall K/k$, $\forall \ux=(x_0,\ldots, x_d)\in\mc(X_K)$, $\forall\beta\in (V_{d, X_K|D_K })_{x_{d-1}}$
\end{minipage}
 \right\}.\]
By Theorem \ref{thm:LS} we always have an inclusion $\tF(X,D)\subset F^{\rm LS}(X,D)$ and this is an 
equlaity if $X$ has a smooth projective compactification $\Xb$ such that $\Xb\setminus U$ is SNCD.
\end{defn}

\begin{para}\label{para:level}
We say that a reciprocity sheaf $F$ has {\em level $n\ge 0$}, 
if for any smooth $k$-scheme $X$ and any $a\in F(\A^1\times X)$ the following implication holds:
\[a_{\A^1_z}\in F(z)\subset F(\A^1_z), \quad \text{for all } z\in X_{(\le n-1)}  
\Longrightarrow a\in F(X)\subset F(\A^1\times X),\]
where $a_{\A^1_z}$ denotes the restriction of $a$ to $\A^1_z=\A^1\times z\subset \A^1\times X$,
$X_{(\le n-1)}$ denotes the set of points in $X$ whose closure has dimension $\le n-1$, 
and for a smooth scheme $S$ we identify $F(S)$ with its image in $F(\A^1\times S)$ via pullback along the projection map.
This is equivalent to  the motivic conductor of $F$ having level $n$ in the language of \cite{RS}.
The $\A^1$-invariant sheaves with transfers are precisely the reciprocity sheaves of level $0$. 
By \cite[Part 2]{RS}, the presheaves $X\mapsto G(X)$, with $G$ a commutative algebraic group,
$X\mapsto \Hom(\pi_1^{\rm ab}(X), \Q/\Z)$,  
$X\mapsto {\rm Lisse}^1(X)$, the lisse $\ol{\Q_{\ell}}$-sheaves of rank $1$,
and  $X\mapsto {\rm Conn}^1_{\rm int}(X)$, the integrable rank 1 connections on $X$ (${\rm char}(k)=0$), 
are reciprocity sheaves of level $1$;
and  the presheaves $X\mapsto \Omega^1(X)$, $X\mapsto Z\Omega^2(X)$ (both in ${\rm char}(k)=0$), 
and $X\mapsto H^1(X_{\rm fppf}, G)$, with $G$ a finite flat $k$-group scheme, are reciprocity sheaves of level $2$.
\end{para}

We say that \emph{resolutions of singularities hold over $k$ in dimension $\le n$},
if for any integral projective $k$-scheme
$Z$ of dimension $\le n$ and any effective Cartier divisor $E$ on $Z$, there exists a proper birational morphism $h:Z'\to Z$
such that $Z'$ is regular and $|h^{-1}(E)|$ has simple normal crossings. 
This is known to hold if ${\rm char}(k)=0$ by Hironaka or if $n\leq 3$ by \cite{Cossart-Piltant}.

\begin{cor}\label{cor:HLS}
Assume $F$ has  level $n\ge 0$ and 
resolutions of singularities hold over $k$ in dimension $\le n$.
Let $(X,D)$ ba a modulus pair. Assume  $X$ is quasi-projective and  set $U=X\setminus |D|$. Let $a\in F(U)$.
The following statements are equivalent
\begin{enumerate}[label=(\roman*)]
\item\label{cor:HLS1} $a\in\tF(X,D)$;
\item\label{cor:HLS2} $h^*a\in F^{\rm LS}(Z, h^*D)$, for all $k$-morphisms 
$h:Z\to X$ with $Z$ smooth, quasi-projective with  $\dim(Z)\le n$ such that  $|h^*D|$ is SNCD.
\end{enumerate}
\end{cor}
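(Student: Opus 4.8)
The plan is to prove the two implications separately; \ref{cor:HLS1}$\Rightarrow$\ref{cor:HLS2} is formal, and \ref{cor:HLS2}$\Rightarrow$\ref{cor:HLS1} carries all the content. For \ref{cor:HLS1}$\Rightarrow$\ref{cor:HLS2}, given $h:Z\to X$ as in \ref{cor:HLS2}, the datum $(h,h^*D)$ defines a morphism of modulus pairs $(Z,h^*D)\to(X,D)$ (note $h(Z\setminus|h^*D|)=h(h^{-1}(U))\subset U$ and $h^*D\le h^*D$), so the contravariant functoriality of the modulus, i.e.\ that $\tF$ is a presheaf on $\uMCor$, yields $h^*a\in\tF(Z,h^*D)$. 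Since Theorem \ref{thm:LS} (see Definition \ref{def;FLS}) always gives the inclusion $\tF(Z,h^*D)\subset F^{\rm LS}(Z,h^*D)$, we obtain \ref{cor:HLS2}. This direction uses neither the level hypothesis nor resolution of singularities.

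For the converse I would first use the two standing hypotheses to replace $F^{\rm LS}$ by $\tF$ on every test scheme. Let $h:Z\to X$ with $Z$ smooth quasi-projective, $\dim Z\le n$, and $|h^*D|$ an SNCD. Compactifying $Z$ and then resolving a projective closure over the already-regular, already-SNCD locus $Z$---possible since resolution holds in dimension $\le n$---produces a smooth projective $\Zb\supset Z$ whose boundary $\Zb\setminus(Z\setminus|h^*D|)$ is an SNCD. Hence Theorem \ref{thm:LS} applies to $(Z,h^*D)$ componentwise on the connected components of $Z$ and gives $F^{\rm LS}(Z,h^*D)=\tF(Z,h^*D)$. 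Consequently condition \ref{cor:HLS2} is equivalent to
\[(\star)\qquad h^*a\in\tF(Z,h^*D)\quad\text{for all $h:Z\to X$ as above.}\]

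It then remains to deduce $a\in\tF(X,D)$ from $(\star)$, and this is exactly where the level-$n$ hypothesis enters. By \cite{RS} the condition that $F$ has level $n$ (\ref{para:level}) is equivalent to its motivic conductor having level $n$, and I would invoke from \emph{loc.\ cit.} the resulting principle that the modulus of a section is detected after pullback to smooth $k$-schemes of dimension $\le n$. Concretely, unwinding \ref{para:RSC}, membership $a\in\tF(X,D)$ is the requirement $[Z_0]^*a=[Z_1]^*a$ for every correspondence $Z\subset\A^1\times S\times U$ appearing in the definition of the modulus; the assignment recording the action along the $\A^1$-parameter is a section on an $\A^1$-bundle whose $\A^1$-constancy is precisely the modulus condition, and the defining property of level $n$ lets one test this constancy after restricting the base $S$ to its points of dimension $\le n-1$. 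Each such restriction makes the correspondence factor through a smooth $k$-scheme of dimension $\le n$ mapping to $X$, which after resolving the pullback of $D$ is a test morphism as in $(\star)$; feeding $(\star)$ back in (using the compatibility of the correspondence action with these pullbacks, cf.\ \ref{HS2}) gives $[Z_0]^*a=[Z_1]^*a$, whence $a\in\tF(X,D)$.

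The hard part is this last reduction. Everything before it is formal: the easy implication is functoriality, and the replacement of $F^{\rm LS}$ by $\tF$ is just resolution feeding into Theorem \ref{thm:LS}. The genuine work---and the only place \cite{RS} is essential---is translating the level-$n$ property, phrased as an $\A^1$-invariance statement about sections on $\A^1$-bundles, into the modulus-descent statement ``$(\star)\Rightarrow a\in\tF(X,D)$'', together with the geometric bookkeeping showing that restricting the base of the defining correspondences to dimension $\le n-1$ yields precisely, after an SNCD resolution of the pulled-back divisor, the morphisms tested in \ref{cor:HLS2}.
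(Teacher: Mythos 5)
Your proposal follows essentially the same route as the paper: the easy implication is functoriality of $\tF$ plus the inclusion $\tF\subset F^{\rm LS}$ (the paper gets this from \ref{HS3} via Theorem \ref{thm:LS}), and the converse is handled by first using resolution of singularities in dimension $\le n$ together with Theorem \ref{thm:LS} and Theorem \ref{thm:mod-pairing}\ref{thm:mod-pairing2} to upgrade $F^{\rm LS}(Z,h^*D)$ to $\tF(Z,h^*D)$ on every test scheme, exactly as you do. The one place where you diverge is the final reduction from your condition $(\star)$ to $a\in\tF(X,D)$: the paper disposes of this in one stroke by citing \cite[Corollary 6.10]{RS-ZNP}, which is precisely the statement that for a level-$n$ sheaf the modulus is detected by pullback along morphisms from smooth quasi-projective schemes of dimension $\le n$ with SNCD pullback of $D$. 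You correctly identify that such a detection principle is the crux and that it lives in the companion literature, but your inline sketch of it is not yet a proof: the level condition of \ref{para:level} tests whether a section of $F(\A^1\times S)$ lies in $F(S)$ (constancy in the sheaf sense), which is a priori stronger than the endpoint equality $[Z_0]^*a=[Z_1]^*a$ appearing in the definition of the modulus, and passing from the restricted correspondence over $\A^1_z$ to an admissible correspondence on a resolution of its support where $(\star)$ applies requires the bookkeeping that constitutes the actual proof of \cite[Corollary 6.10]{RS-ZNP}. So the argument is complete if you replace the heuristic paragraph by that citation (note it is \cite{RS-ZNP}, not \cite{RS}, that contains the needed corollary); as a self-contained derivation it has a gap at exactly that step.
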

\begin{proof}
By \ref{HS3} we only have to show the implication $\ref{cor:HLS2}\Rightarrow\ref{cor:HLS1}$. 
Let $h:Z\to X$ be as in \ref{cor:HLS2}. By resolution of singularity in dimension $\le n$, 
Theorem \ref{thm:LS} together with Theorem \ref{thm:mod-pairing}\ref{thm:mod-pairing2}
imply $F^{\rm LS}(Z, h^*D)=F_{\gen}(Z, h^*Z)=\tF(Z, h^*Z)$. Hence $a\in \tF(X,D)$, by \cite[Corollary 6.10]{RS-ZNP}
\end{proof}

\begin{rmk}
Note  that if ${\rm char}(k)=0$ or ${\rm char}(k)=p>0$ and $F$ has level 
$\le 3$ (see the examples listed in \eqref{para:level} and also \cite[Example 6.11(5)]{RS-ZNP}), then Corollary \ref{cor:HLS} yields unconditional results.
\end{rmk}

\section{A diagonal argument}\label{sec:diag}

We give a refinement of Theorem \ref{thm:LS} in the case where $D$ is reduced, see Proposition \ref{prop:diag} below.
In this section $k$ is a perfect field and $F\in\RSC_\Nis$.

\begin{lem}\label{lem:diag}
Let $S$ be a smooth integral $k$-scheme of dimension $d-1$ and $X=\A^1_S=S[z]$.
Let $L=k(X)$ be the function field  and set $X_L=X\otimes_k L$. Write
\[t=z\otimes 1, \,\, s=1\otimes z\in \sO_{X_L}.\]
Let $\eta\in (S\times_k S)^{(d-1)}$ be the generic point of the diagonal and let
$\theta_1,\ldots, \theta_{d-1}$ be a regular sequence of parameters in $A:=\sO_{S\times_k S ,\eta}$.
Then $A[t,s]/(t, \theta_1,\ldots, \theta_{d-1-i})$ is integral for $i=0,\ldots, d-1$.
We denote by $x_i\in X_L\subset X\times_k X$,  the generic point of the image of the natural map
\[\Spec A[t,s]/(t, \theta_1,\ldots, \theta_{d-1-i})\to X\times_k X,\]
and by $x_d\in X_L^{(0)}$ the generic point of the component containing $x_0$.
Set 
\[\beta_0:=\left\{\frac{t-s}{t-1}, \theta_1,\ldots, \theta_{d-1}\right\},\quad 
\gamma_0:=\left\{s, \theta_1,\ldots, \theta_{d-1}\right\}\in K^M_d(k(x_d)).\]

Then $\ux=(x_0,\ldots, x_d)\in \mc(X_L)$ and 
\eq{lem:diag1}{\pm (a_L,\beta_0)_{X_L/L,\ux}= j^*(a-\lambda^*a)\in F(L),\quad 
\text{for all }a\in F(\P^1\setminus 0_S, \infty_S),}
\eq{lem:diag2}{(a_L,\gamma_0)_{X_L/L,\ux}=0,\quad \text{for all }  a\in \tF(\P^1_S\setminus 0_S),}
where $a_L\in F(X_L[t^{-1}])$ 
denotes the restriction of $a$ to $X_L$, $j:\Spec L\to X$ is the inclusion of the generic point, and 
the map $\lambda$ is the composition
\[X\xr{\text{proj.}} S \cong V(z-1)\inj X.\]
\end{lem}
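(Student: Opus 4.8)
The plan is to prove both identities \eqref{lem:diag1} and \eqref{lem:diag2} by induction on $d$, peeling off the diagonal parameters $\theta_1,\dots,\theta_{d-1}$ one at a time until one lands in the case $d=1$, which is a reciprocity computation on a curve. First I would record the elementary points. Since $\theta_1,\dots,\theta_{d-1}$ is a regular sequence in the regular local ring $A$, each quotient $A/(\theta_1,\dots,\theta_{d-1-i})$ is again regular local, hence a domain, so $A[t,s]/(t,\theta_1,\dots,\theta_{d-1-i})=(A/(\theta_1,\dots,\theta_{d-1-i}))[s]$ is integral and the $x_i$ are well defined; a dimension count in $X\times_k X$ (the locus $V(t,\theta_1,\dots,\theta_{d-1-i})$ has dimension $d+i$ and dominates the second projection) shows $x_i\in (X_L)_{(i)}$, so $\ux\in\mc(X_L)$. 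The decisive structural observation is that $a_L$ is regular exactly on $\{t\neq 0\}$ and singular along the divisor $\overline{\{x_{d-1}\}}=V(t)$, on which the whole chain $\ux$ sits. This is precisely why one cannot strip the symbol from the top by \ref{HS2}.

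For the inductive step ($d\ge 2$) I would apply the reciprocity law \ref{HS4} to the break chain $\ux''=(x_0,\dots,x_{d-2},x_d)\in\mc_{d-1}(X_L)$ and to $\beta_0$ (resp. $\gamma_0$); since the break is at $r=d-1\ge 1$, the sum formula holds without a projectivity hypothesis. The set $\b(\ux'')$ consists of the codimension-one points $y$ through $\overline{\{x_{d-2}\}}=V(t,\theta_1)$. For every $y\neq V(t)$ the section $a_L$ is regular at $y$, so \ref{HS2} applies and the $y$-term equals $(a_L(y),\partial_y\beta_0)$ on $\overline{\{y\}}$, which vanishes unless $\beta_0$ has a zero or pole along $y$. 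Comparing the zero/pole divisors of $\beta_0$ (namely $\{t=s\}$, $\{t=1\}$ and the $\{\theta_j=0\}$) with the condition $y\supset V(t,\theta_1)$, one finds $V(\theta_1)$ is the only such $y$ besides the target $V(t)$ (the same analysis works for $\gamma_0$). Hence \ref{HS4} gives
\[(a_L,\beta_0)_{X_L/L,\ux}=-\,(a_L,\iota_{V(\theta_1)}\beta_0)_{X_L/L,\ux''(V(\theta_1))},\]
and applying \ref{HS2} on the right (legitimate, as $a_L$ is regular along $V(\theta_1)$) strips $\theta_1$ through the tame symbol, producing $\pm\{(t-s)/(t-1),\theta_2,\dots,\theta_{d-1}\}$ on $\overline{V(\theta_1)}$. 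Choosing the $\theta_j$ as differences of local coordinates on $S$ pulled back to $S\times_k S$, one identifies $\overline{V(\theta_1)}$ with the $(d-1)$-dimensional instance $\A^1_{S'}$ of the same configuration (same coordinate $t$, restricted $a$ still singular only along $t=0$ and still of modulus $\infty_{S'}$, resp. empty), so the induction hypothesis applies.

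For the base case $d=1$ I would invoke reciprocity \ref{HS4} on $\P^1_L$ in the coordinate $t$. For $\beta_0=(t-s)/(t-1)$ the only contributions besides $t=0$ come from the simple zero at $t=s$ and the simple pole at $t=1$, where \ref{HS2} gives $+a(s)$ and $-a(1)$, and from $t=\infty$, where $\beta_0\equiv 1\pmod{\mathfrak m_\infty}$, so the term vanishes by the modulus property \ref{HS3} (here $a$ has reduced modulus $\infty_S$). Identifying $a(s)=j^*a$ and $a(1)=j^*\lambda^*a$ then yields \eqref{lem:diag1} up to the accumulated sign. For $\gamma_0=\{s\}$ the function $s$ is constant in $t$, so all tame symbols vanish and, since $a$ now has empty modulus and is regular at $\infty$, reciprocity forces the $t=0$ term to vanish, giving \eqref{lem:diag2}.

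The main obstacle is exactly the coincidence of the singular locus of $a_L$ with the support of the chain, which blocks any naive top-down reduction; the role of the product $X\times_k X$, the diagonal parameters, and the specific function $(t-s)/(t-1)$ is to reroute the computation through the transverse divisor $V(\theta_1)$, where $a$ is regular, via reciprocity. The technical heart is therefore twofold: checking that at each stage exactly one \emph{good} divisor contributes, and verifying that $\overline{V(\theta_1)}$ is genuinely the same configuration one dimension down (which is where the freedom in choosing the regular sequence $\theta_j$ is used). Finally, tracking the graded-commutativity signs of the iterated tame symbols together with the sign $-1$ from the curve computation accounts for the universal sign $\pm$ in \eqref{lem:diag1}.
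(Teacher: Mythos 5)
Your overall strategy is the one the paper uses: apply the reciprocity law \ref{HS4} at the break chain $(x_0,\ldots,x_{d-2},x_d)$, observe that among the divisors through $\ol{\{x_{d-2}\}}=V(t,\theta_1)$ only $V(t)$ (where $a_L$ is singular) and $V(\theta_1)$ (where $\beta_0$ has a nontrivial tame symbol) contribute, strip $\theta_1$ via \ref{HS2}, and finish with a curve computation on $\P^1_L$ in which the term at $\infty$ dies by \ref{HS3} because $(t-s)/(t-1)\equiv 1$ modulo the maximal ideal at $\infty$, leaving $\pm(a(s)-a(1))=\pm j^*(a-\lambda^*a)$; the treatment of $\gamma_0$ is likewise identical. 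The identification of the contributing divisors and the base case are correct.

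There is, however, a genuine gap in how you organize the reduction: you package it as an induction on $d$ in which the induction hypothesis is the lemma itself applied to $\ol{V(\theta_1)}$, viewed as ``the same configuration one dimension down'' over some smooth $S'$ of dimension $d-2$. This does not type-check. After one application of \ref{HS2} you are left with the symbol $(a_L(z_{d-1}),\beta_1)_{Z_{d-1}/L,(x_0,\ldots,x_{d-2},z_{d-1})}$, where $Z_{d-1}=\ol{\{z_{d-1}\}}$ is a $(d-1)$-dimensional closed subvariety of $X_L$ over the \emph{fixed} base field $L=k(\A^1_S)$, of transcendence degree $d$ over $k$; an instance of the lemma for a $(d-2)$-dimensional base $S'$ would instead live over $L'=k(\A^1_{S'})$, of transcendence degree $d-1$, and $Z_{d-1}$ is not of the form $\A^1_{S'}\otimes_k L'$ (the function fields have different transcendence degrees, and $V(\theta_1)$ is not a product). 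So the induction hypothesis cannot be invoked as stated, and the extra freedom you claim in choosing the $\theta_j$ as coordinate differences does not repair this (it also proves less than the lemma, which is asserted for an arbitrary regular system of parameters at $\eta$). The paper avoids the problem by not inducting on the lemma at all: it simply iterates the \ref{HS4}/\ref{HS2} step inside $X_L$ along the descending chain $Z_{d-1}\supset Z_{d-2}\supset\cdots\supset Z_1$, using at each stage that $a_L(z_{i})$ is regular away from $t=0$ and that only $V(\theta_{d-i})$ and $V(t)$ contribute, and only at the last stage identifies $Z_1=\Spec L[t]\subset\P^1_L$ (which holds because the $\theta_j$ generate the maximal ideal at the generic point of the diagonal, so $Z_1$ is the closure of the diagonal copy of $\A^1$) to run the curve argument. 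Recasting your ``inductive step'' as this iteration over the fixed field $L$ — or as an induction on an auxiliary statement about chains and symbols on closed subvarieties of $X_L$ — closes the gap and recovers the paper's proof.
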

\begin{proof}
By, e.g., \cite[Theorems 14.2 and 3]{Matsumura} the ring $A[t,s]/(t, \theta_1,\ldots, \theta_{d-1-i})$ is integral, regular and of dimension $i+1$.
It follows that  $x_i\in X_{L}$ is a point of dimension $i$ so that $\ux\in \mc(X_L)$. 
We first show \eqref{lem:diag1}. Let $a\in \tF(\P^1_S\setminus 0_S,\infty_S)$.
Assume $d-1\ge 1$ and set $\ux'=(x_0, \ldots, x_{d-2}, x_d)\in \mc_{d-1}(X_L)$.
By \ref{HS4}
\[ (a_L,\beta_0)_{X_L/L,\ux}= -\sum_{\substack{y\in \b(\ux')\\ y\neq x_{d-1}}} (a_L,\beta_0)_{X_L/L,\ux'(y)}.\]
The maximal ideal in $\sO_{X_L,x_{d-2}}$ is generated by $t$, $\theta_1$  and thus the only point $y\in X_L^{(1)}$ with $y> x_{d-2}$,
at which $\beta_0$ is not regular  is given by 
$z_{d-1}= \Spec \sO_{X_L,x_{d-2}}/(\theta_1)$,
whereas $x_{d-1}= \Spec \sO_{X_L,x_{d-2}}/(t)$ is the only such point at which  $a_L$ is not regular.
Thus \ref{HS2} yields

\begin{align*}
 (a_L,\beta_0)_{X_L/L,\ux}=  - (a_L(z_{d-1}),\beta_1)_{Z_{d-1}/L, (x_0,\ldots,x_{d-2},z_{d-1})},
\end{align*}
where $Z_{d-1}=\ol{\{z_{d-1}\}}\subset X_L$, and 
\[\beta_1=\partial_{z_{d-1}} \beta_0 =\pm \left\{\frac{t-s}{t-1}, \theta_2,\ldots, \theta_{d-1}\right\}\in K^M_{d-1}(L(Z_{d-1})).\]
If $d-2=0$ we stop, if $d-2\ge 1$ we observe that $a_L(z_{d-1})\in \tF(Z_{d-1}[t^{-1}])$ and we proceed by applying 
\ref{HS4} and \ref{HS2} again. Iterating yields
\[(a_L,\beta)_{X_L/L,\ux}=\pm \left(a_L(z_1), \frac{t-s}{t-1}\right)_{Z_1/L, (x_0, z_1)},\]
where $z_1\in Z_1$ is the generic point of $\Spec \sO_{X_L, x_0}/(\theta_1,\ldots, \theta_{d-1})$ and $Z_1$ is its closure in $X_L$. 
By the choice of the $\theta_i$, we have $Z_1= \Spec L[t]\subset \P^1_L$ and hence we may identify $x_0$ with $0_L\in \P^1_L$.
Applying \ref{HS1} and \ref{HS4} one more time we find
\[(a_L,\beta_0)_{X_L/L,\ux}=\pm\sum_{y\in\P_L\setminus 0_L} \left(a_L(z_1), \frac{t-s}{t-1}\right)_{\P^1_L/L, (y, z_1)}.\]
Note that under the identification $Z_1= \Spec L[t]\subset \P^1_L$, the element $a_L(z_1)$ corresponds to the pullback of $a$ along $\P^1_{L}\to \P^1_S$ 
induced by the natural inclusions $\sO_{S}\subset k(S)\subset k(S)(s)=L$. Thus $a_L(z_1)\in \tF(\P^1_L\setminus 0_L, \infty_L)$ by the choice of $a$ in \eqref{lem:diag1}. 
Since  $\frac{t-s}{t-1}\in (V_{1, \P^1_L|\infty_L})_{\infty_L}$ we obtain by \ref{HS3} and \ref{HS2}
\[(a_L,\beta_0)_{X_L/L,\ux}=\pm \big(a_{L}(z_1)_{|\{t-s=0\}}- a_{L}(z_1)_{|\{t-1=0\}}\big)\in F(L).\]
This yields \eqref{lem:diag1}. Now assume $a\in F(\P^1_S\setminus 0_S)$. 
The same argument as above with $\beta_0$ replaced by $\gamma_0$ yields
\[(a_L,\gamma_0)_{X_L/L,\ux}=\pm\sum_{y\in\P_L\setminus 0_L} \left(a_L(z_1), s\right)_{\P^1_L/L, (y, z_1)}.\]
This vanishes by \ref{HS2} since $a_L(z_1)\in F(\P^1_L\setminus 0_L)$ and $s\in L^\times$.
Hence \eqref{lem:diag2}.
\end{proof}

\begin{cor}\label{cor:diag}
Let $S$ be a smooth integral $k$-scheme of dimension $d-1$ and $X=\A^1_S=S[z]$.
Let $L=k(X)$ be the function field  and set $X_L=X\otimes_k L$ 
and $S_L=S\otimes_k L$. 
Let $\iota: S_L=V(t)\inj X_L $  be the closed immersion defined by $t=0$, where $t=z\otimes 1$.
Denote by $s_0\in S_L$ the image of  the generic point of the diagonal in $S\times S$ under the map 
$S\times_k S\to S\times_k X\to S_L$ where the first map is the base change of 
the closed immersion $S\inj X$ defined by $z=0$.
Denote by $\eta\in X_L$ the generic point of the irreducible component containing $\iota(s_0)$.
Let $a\in \tF(\P^1_S\setminus 0_S, \infty_S)\subset F(X[z^{-1}])$.
\begin{enumerate}[label=(\arabic*)]
\item\label{cor:diag1} Assume for all $\uy=(y_0,\ldots, y_{d-1})\in \mc(S_L)$ with $y_0=s_0$ we have 
\[(a_L, \beta)_{X_L/L, (\iota(\uy), \eta)}= 0,\quad \text{for all }\beta\in K^M_{d}(\sO_{X_L, \iota(y_{d-1})}).\]
Then $a\in {\rm Im}(F(S)\to \tF(\P^1_S\setminus 0_S,\infty_S))$.
\item\label{cor:diag2} Assume for all $\uy=(y_0,\ldots, y_{d-1})\in \mc(S_L)$ with $y_0=s_0$ we have 
\[(a_L, \beta)_{X_L/L, (\iota(\uy), \eta)}= 0,\quad \text{for all }\beta\in (V_{d, X_L|0_{S_L}})_{\iota(y_{d-1})}.\]
Then $a\in \tF(\P^1_S, 0_S+\infty_S)$.
\end{enumerate}
\end{cor}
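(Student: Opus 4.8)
The plan is to deduce both statements directly from Lemma~\ref{lem:diag}, by feeding the chain $\ux=(\iota(\uy),\eta)$ constructed there into the two hypotheses. Recall that a regular system of parameters $\theta_1,\ldots,\theta_{d-1}$ of $A=\sO_{S\times_k S,\eta}$ produces a maximal chain $\ux=(x_0,\ldots,x_d)\in\mc(X_L)$ with $x_d=\eta$ and $x_0,\ldots,x_{d-1}$ lying on $0_{S_L}=V(t)$; viewing $\uy:=(x_0,\ldots,x_{d-1})$ as a chain on $S_L$ gives $\uy\in\mc(S_L)$ with $y_0=s_0$ and $\iota(y_{d-1})$ the generic point of $0_{S_L}$, so the hypotheses of \ref{cor:diag1} and \ref{cor:diag2} do apply to the symbols $(a_L,-)_{X_L/L,\ux}$. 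Since $s=1\otimes z\in L^\times$, the functions $\tfrac{t-s}{t-1}$, $s$, and $\theta_1,\ldots,\theta_{d-1}$ are units in $\sO_{X_L,\iota(y_{d-1})}$, whence $\beta_0,\gamma_0\in K^M_d(\sO_{X_L,\iota(y_{d-1})})$; moreover $\beta_0-\gamma_0=\{u,\theta_1,\ldots,\theta_{d-1}\}$ with $u=\tfrac{t-s}{(t-1)s}\equiv 1\bmod t$, so $\beta_0-\gamma_0\in(V_{d,X_L|0_{S_L}})_{\iota(y_{d-1})}$.

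For \ref{cor:diag1} I would apply the hypothesis to this $\uy$ and $\beta=\beta_0$, obtaining $(a_L,\beta_0)_{X_L/L,\ux}=0$. By \eqref{lem:diag1} this reads $j^*(a-\lambda^*a)=0$ in $F(L)$. As $X[z^{-1}]=\G_m\times S$ is smooth and integral with function field $L$ and $F$ is a reciprocity sheaf, restriction to the generic point $F(\G_m\times S)\hookrightarrow F(L)$ is injective; hence $a=\lambda^*a$. Since $\lambda$ factors through the projection $X\to S$, the element $a$ is the pullback of $a|_{V(z-1)}\in F(S)$, which is the assertion $a\in\operatorname{Im}(F(S)\to\tF(\P^1_S\setminus 0_S,\infty_S))$.

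For \ref{cor:diag2} the hypothesis is only imposed for $\beta\in(V_{d,X_L|0_{S_L}})_{\iota(y_{d-1})}$ and the conclusion is a modulus bound, so I would reduce to a one-dimensional statement on the fibre. Writing a general such $\beta$ (for a suitable choice of the parameters $\theta_i$, i.e.\ of $\uy$) in the form $\{u,\theta_1,\ldots,\theta_{d-1}\}$ with $u\equiv 1\bmod t$ by means of the Approximation Corollary~\ref{cor:approx}, the same iterated use of \ref{HS2} and \ref{HS4} as in the proof of Lemma~\ref{lem:diag} yields
\[(a_L,\{u,\theta_1,\ldots,\theta_{d-1}\})_{X_L/L,\ux}=\pm\sum_{y\in\P^1_L\setminus 0_L}(a_{\P^1_L},\bar u)_{\P^1_L/L,(y,z_1)},\]
where $a_{\P^1_L}$ is the pullback of $a$ to $\P^1_L$ and $\bar u\equiv 1\bmod t$ is the restriction of $u$ to the curve $Z_1=\P^1_L$. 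Applying the reciprocity law \ref{HS4} on the projective curve $\P^1_L$, together with the vanishing at $\infty_L$ (which holds by \ref{HS3}, as $a_{\P^1_L}$ already has modulus $\infty_L$), this sum equals $\mp(a_{\P^1_L},\bar u)_{\P^1_L/L,(0_L,z_1)}$. The hypothesis then forces $(a_{\P^1_L},\bar u)_{(0_L,z_1)}=0$, and as $u$ (and $\uy$) vary, $(a_{\P^1_L},\bar\beta)_{(0_L,z_1)}=0$ for all $\bar\beta\in(V_{1,\P^1_L|0_L})_{0_L}$.

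At this stage I would invoke the one-dimensional case of the characterization (applied to the smooth projective curve $\P^1_L$, which is its own smooth projective compactification with reduced boundary $0_L+\infty_L$, so no extra hypothesis is needed): the vanishing of the local symbols at both $0_L$ and $\infty_L$ gives $a_{\P^1_L}\in\tF(\P^1_L,0_L+\infty_L)$. It then remains to descend this from the generic fibre to $\P^1_S$, and this is the main obstacle: one must pass from the modulus statement on $\P^1_L$ (equivalently, after the separable base change $L/k(S)$, on the generic fibre $\P^1_{k(S)}$) to the modulus $0_S+\infty_S$ over all of $S$. This is where the reducedness of the boundary is used, through the codimension-one determination of the modulus for reduced $D$ provided by \cite{S-purity} and \cite[Corollary 2.5]{SaitologRSC}, which does not require a global smooth projective SNCD-compactification of $\P^1_S$. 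A secondary technical point to check is that, as $\uy$ and the $\theta_i$ range over all admissible choices, the reductions $\bar\beta$ exhaust a generating family of $(V_{1,\P^1_L|0_L})_{0_L}$, so that the one-dimensional criterion can indeed be applied.
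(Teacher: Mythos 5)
Your argument for part \ref{cor:diag1} is correct and is essentially the paper's: one checks that $\beta_0=\{\tfrac{t-s}{t-1},\theta_1,\ldots,\theta_{d-1}\}$ lies in $K^M_d(\sO_{X_L,\iota(y_{d-1})})$ (all entries are units at the generic point of $V(t)$), feeds it into the hypothesis, and concludes via \eqref{lem:diag1} and the injectivity of $j^*$ from \cite[Theorem 3.1]{S-purity}. For part \ref{cor:diag2}, however, your route diverges from the paper's and has genuine gaps. The intended argument is much shorter: by \cite[(5.7)]{S-purity} one has $F(\P^1_S,0_S+\infty_S)/F(\P^1_S,0_S)\simeq F(\P^1_S\setminus 0_S,\infty_S)/F(\P^1_S\setminus 0_S)$, so $a=b+c$ with $b\in F(\P^1_S,0_S+\infty_S)$ and $c\in F(\P^1_S\setminus 0_S)$; since $\delta_0:=\beta_0-\gamma_0\in (V_{d,X_L|0_{S_L}})_{x_{d-1}}$, the hypothesis gives $(a_L,\delta_0)=0$, \ref{HS3} gives $(b_L,\delta_0)=0$, and \eqref{lem:diag2} gives $(c_L,\gamma_0)=0$, whence $(c_L,\beta_0)=0$ and $c\in F(S)$ by \eqref{lem:diag1}. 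The decomposition $a=b+c$ and the role of the \emph{second} formula \eqref{lem:diag2} of the lemma (which you never use) are the key ideas you are missing.

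Concretely, three steps of your plan for \ref{cor:diag2} do not go through as written. First, a general element of $(V_{d,X_L|0_{S_L}})_{\iota(y_{d-1})}$ is a sum of symbols $\{u,v_1,\ldots,v_{d-1}\}$ with arbitrary units $v_i$, not symbols with the fixed regular sequence $\theta_1,\ldots,\theta_{d-1}$ in the last $d-1$ slots, and Corollary \ref{cor:approx} does not produce such a normal form; more importantly, even for $\beta=\{u,\theta_1,\ldots,\theta_{d-1}\}$ the iterated \ref{HS4}/\ref{HS2} computation of Lemma \ref{lem:diag} uses that the first entry is a unit in $\sO_{X_L,x_{d-2}}$ (so that $V(\theta_1)$ is the only height-one point above $x_{d-2}$ where $\beta$ is non-regular); a general $u\equiv 1\bmod t$ is only a unit at $x_{d-1}$ and contributes extra boundary terms at every step, which you do not control. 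Second, the object you reduce to, $(\P^1_L,0_L+\infty_L)$ with $L=k(X)$, is not a modulus pair of finite type over $k$, and the hypothesis of the corollary yields the vanishing of symbols only for the single chain $(0_L,z_1)$ over the single field $L$, whereas the characterization theorems require all function fields and all chains; so the one-dimensional case of Theorem \ref{thm:LS} cannot simply be invoked. Third, the descent from $a_{\P^1_L}\in\tF(\P^1_L,0_L+\infty_L)$ back to $a\in\tF(\P^1_S,0_S+\infty_S)$, which you yourself call ``the main obstacle'', is left entirely to a gesture at \cite{S-purity} and \cite{SaitologRSC}; those references are used in Proposition \ref{prop:diag}, not here, and reducedness of the boundary is not the issue at this point. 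The paper's proof of \ref{cor:diag2} sidesteps all of this by never leaving $X_L$ and never quantifying over general $\beta$.
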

\begin{proof}
By \eqref{lem:diag1} and with the notation from there, the condition in \ref{cor:diag1}
implies $j^*a=j^*\lambda^*a$. Since $j^*: F(X)\to F(L)$ is injective, by  \cite[Theorem 3.1]{S-purity}, we obtain the 
first statement. We show the statement in \ref{cor:diag2}.
By \cite[(5.7)]{S-purity} (with $0$ and $\infty$  interchanged),
we have
\[ \frac{F(\P^1_S,0_S+\infty_S)}{F(\P^1_S,0_S)}
\simeq \frac{F(\P^1_S\setminus 0_S,\infty_S)}{F(\P^1_S\setminus 0_S)}.\]
Hence $a=b+c$, for some  $b\in F(\P^1_S,0_S+\infty_S)$ and $c\in F(\P^1_S\setminus 0_S)$.
It suffices to show $c\in F(\P^1_S)$. 
Let the notations be as in Lemma \ref{lem:diag}. We have 
\[\delta_0:=\beta_0-\gamma_0=\left\{\frac{1-s^{-1}t}{t-1},\theta_1,\ldots,\theta_{d-1}\right\}
\in (V_{d, X_L|0_{S_L}})_{x_{d-1}}.\]
Thus \ref{HS3} yields $(b_L, \delta_0)_{X_L/L, \ux}=0$; furthermore $(c_L, \gamma_0)_{X_L/L, \ux}=0$, by
\eqref{lem:diag2}. Thus the assumption yields
$0=(a_L, \delta_0)_{X_L/L,\ux}= (c_L, \beta_0)_{X_L/L, \ux}$. Hence $c\in F(S)$ by \eqref{lem:diag1}.
\end{proof}

The following proposition is a version of the implication $\ref{thm:LS3}\Rightarrow \ref{thm:LS1}$ in Theorem \ref{thm:LS}
for a modulus pair $(X,D)$ with $X$ smooth and $D$ a {\em reduced} SNCD,
which does not require the existence of a smooth compactification $\Xb$ of $X$ such that $D$ is the restriction
of an SNCD on $\Xb$. Besides Corollary \ref{cor:diag}, an essential ingredient is 
\cite[Corollary 2.5]{SaitologRSC}. Note that though results of the present paper are used in \cite{SaitologRSC},
this is not the case in section 2 of {\em loc. cit.}, hence there is no circular argument.

\begin{prop}\label{prop:diag}
Let $X\in \Sm$ and assume $D$ is a {\em reduced} SNCD on $X$. 
Let $U\subset X$ be an open subset containing all the generic points of $D$.
Let $a\in F(X\setminus D)$. 
\begin{enumerate}[label=(\arabic*)]
\item\label{prop:diag2} 
Assume for all function fields $K/k$ and all $\ux=(x_0,\ldots, x_d)\in\mc(U_K)$ with $x_{d-1}\in D_K^{(0)}$,
we have 
\[(a,\beta)_{X_K/K,\ux}=0,\quad\text{for all }\beta\in K^M_d(\sO_{X_K,x_{d-1}}).\]
Then $a\in F(X)$. 
\item\label{prop:diag1}
Assume for all function fields $K/k$ and all $\ux=(x_0,\ldots, x_d)\in\mc(U_K)$ with $x_{d-1}\in D_K^{(0)}$,
we have 
\[(a,\beta)_{X_K/K, \ux}=0,\quad \text{for all } \beta\in (V_{d, X_K|D_K})_{x_{d-1}}.\]
Then $a\in \tF(X,D)$.
\end{enumerate}
\end{prop}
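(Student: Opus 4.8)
The plan is to prove both parts simultaneously by reducing to the model situation treated in Corollary~\ref{cor:diag} and then feeding the hypotheses into the diagonal argument established there. Both assertions are local along $D$: since $X$ is smooth and $D$ is a \emph{reduced} SNCD, a section $a\in F(X\setminus D)$ lies in $F(X)$ (resp. in $\tF(X,D)$) if and only if it does so in a Nisnevich neighborhood of the generic point of each irreducible component of $D$, and near such a generic point the other components of $D$ do not appear. First I would invoke \cite[Corollary 2.5]{SaitologRSC} to reduce the entire statement (hypotheses and conclusion together) to the normal form in which $X=\A^1_S=S[z]$ with $S$ smooth integral over $k$ of dimension $d-1$, the divisor $D$ is the reduced zero-section $0_S=V(z)$, and moreover $a\in\tF(\P^1_S\setminus 0_S,\infty_S)$ after choosing the compactification $\P^1_S$. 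This is exactly the input required by Lemma~\ref{lem:diag} and Corollary~\ref{cor:diag}, and in it one may take the neighborhood $U$ to contain the generic point of $0_S$, so that the chains used below lie in $\mc(U_K)$.

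In the model situation I would apply the hypothesis of the Proposition with the \emph{specific} function field $K=L:=k(X)=k(\A^1_S)$, which is indeed a function field over $k$. With the notation of Corollary~\ref{cor:diag}, the chains $(\iota(\uy),\eta)$ attached to $\uy=(y_0,\ldots,y_{d-1})\in\mc(S_L)$ with $y_0=s_0$ are maximal chains on $X_L=X\otimes_k L$: the closed immersion $\iota$ preserves dimension over $L$, so $\iota(y_i)$ is $i$-dimensional, $\eta$ is the generic point of the ambient component, and the codimension-one member is $\iota(y_{d-1})\in(0_{S_L})^{(0)}=D_L^{(0)}$. Hence these chains are among those allowed in conditions \ref{prop:diag2} and \ref{prop:diag1}. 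Comparing Milnor $K$-theory groups, the class $\beta\in K^M_d(\sO_{X_L,\iota(y_{d-1})})$ of \ref{prop:diag2} is precisely the hypothesis of Corollary~\ref{cor:diag}\ref{cor:diag1}, while the class $\beta\in(V_{d,X_L|D_L})_{\iota(y_{d-1})}=(V_{d,X_L|0_{S_L}})_{\iota(y_{d-1})}$ of \ref{prop:diag1} is the hypothesis of Corollary~\ref{cor:diag}\ref{cor:diag2}. Thus the assumptions of the Proposition specialize to exactly the vanishing of local symbols required by the diagonal argument.

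Corollary~\ref{cor:diag}\ref{cor:diag1} then yields $a\in\Im(F(S)\to\tF(\P^1_S\setminus 0_S,\infty_S))$, i.e. $a$ is pulled back from $S$ (so $a=\lambda^*a$) and therefore extends across $0_S$ to give $a\in F(\A^1_S)=F(X)$, proving \ref{prop:diag2}; and Corollary~\ref{cor:diag}\ref{cor:diag2} yields $a\in\tF(\P^1_S,0_S+\infty_S)$, whose restriction to $\A^1_S=\P^1_S\setminus\infty_S$ gives $a\in\tF(\A^1_S,0_S)=\tF(X,D)$, proving \ref{prop:diag1}. Reassembling these local conclusions through \cite[Corollary 2.5]{SaitologRSC} delivers the global statements. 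The step I expect to be the main obstacle is the very first one: justifying that both global properties may be tested Nisnevich-locally at the generic points of $D$, and that the local data $(X,D,a)$ can be put in the precise normal form $(\A^1_S,0_S)$ with \emph{reduced} modulus $\infty_S$ at infinity demanded by Corollary~\ref{cor:diag}. This is where \cite{S-purity} (in particular the injectivity of $j^*$ used inside Corollary~\ref{cor:diag}) and \cite[Corollary 2.5]{SaitologRSC} do the real work; once the normal form is in hand, the remainder is the bookkeeping of chains and $K$-theory classes described above.
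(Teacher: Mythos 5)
Your proposal correctly identifies the engine of the proof (the diagonal argument of Corollary \ref{cor:diag}) and the overall shape: localize at the generic points of $D$ via \cite[Corollary 2.5]{SaitologRSC}, put the pair into the normal form $(\A^1_S,0_S)$, and feed the hypothesis into Corollary \ref{cor:diag}. But the step you yourself flag as ``the main obstacle'' is not a formality that purity results dispose of --- it is the actual content of the proof, and your proposal leaves it open. There are two genuine gaps. First, \cite[Corollary 2.5]{SaitologRSC} only reduces the \emph{conclusion} to the henselian local ring at a generic point $\eta$ of $D$; it does not produce the presentation $X=\A^1_S$. That presentation requires finding a retraction $X'\to D$ on an \'etale neighborhood (via \cite[Lemma 7.13]{BRS}, resting on Elkik), and then an \'etale morphism $v:X\to\A^1_D$ with $D=v^{-1}(0_D)$, together with \cite[Lemmas 4.2, 4.4, 5.9]{S-purity} to identify $F(X\setminus D)/\tF(X,D)$ with $\tF(\P^1_D\setminus 0_D,\infty_D)/\tF(\P^1_D,0_D+\infty_D)$ near $\eta$.

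Second, and more seriously, the \emph{hypothesis} of the Proposition does not automatically transfer along these \'etale morphisms, and this transfer is where most of the work lies. For the Nisnevich neighborhood $u:X'\to X$ one must show the symbol vanishes on all of $(V_{d,X'_K|D_K})_{x_{d-1}}$, not merely on classes pulled back from $X_K$; the paper does this by first using \ref{HS3} (valid because $a$ admits \emph{some} modulus $nD$) to kill $(V_{d,X'_K|nD_K})_{x_{d-1}}$ and then checking that $u_K^*$ surjects onto the quotient. For the map $v:X\to\A^1_D$ the direction is reversed: one must deduce the vanishing downstairs on $\A^1_{D_K}$ from the vanishing upstairs on $X_K$, which requires the norm isomorphism on henselian stalks and the projection formula \ref{HS5'} of Corollary \ref{cor:HS5}, i.e.\ $(v^*b,\beta)_{X_K/K,\uy}=(b,\Nm(\beta))_{\A^1_{D_K}/K,\ux}$. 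Neither of these mechanisms appears in your proposal, so as written the argument does not close. Once they are supplied, your application of Corollary \ref{cor:diag} with $K=L=k(X)$ and the specific chains $(\iota(\uy),\eta)$ is exactly what the paper does.
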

\begin{proof}
We only prove  \ref{prop:diag1}, the proof of \ref{prop:diag2} is similar.
By \cite[Corollary 2.5]{SaitologRSC} there is an exact sequence
\eq{eq0;lem2;diagonal}{ 0\to \tF(X,D) \to F(X\setminus D) \to \underset{\eta\in D^{(0)}}{\bigoplus} 
\frac{F(\sO_{X,\eta}^h\setminus\eta)}{\tF(\sO_{X,\eta}^h,\eta)}.}
This reduces us to the case $X$ and $D$ smooth, affine, connected, with generic point $\eta\in D$ and it suffices to show that the condition in \ref{prop:diag1} implies $a\in \tF(\sO_{X,\eta}^h,\eta)$.

\begin{claim}\label{prop:diag-claim}
We may further assume that there is a  morphism $X\to D$, such that $D\inj X\to D$ is the identity.
\end{claim}

Indeed, by \cite[Lemma 7.13]{BRS} (which is a variant of \cite[Lemma 8.5]{S-purity} and relies on a result of Elkik \cite{Elkik}),
we find an \'etale morphism $u:X'\to X$ and a morphism $X'\to D$, such that $u$ induces an isomorphism  $u^{-1}(D)\xr{\simeq} D$ and 
the composition  $D\inj X'\to D$ is the identity. 
Thus it suffices to show 
\eq{prop:diag3}{((u^*a)_K, \gamma)_{X'_K/K,\ux}=0\quad \text{for all } \gamma\in (V_{d, X'_K|D_K})_{x_{d-1}},}
for $K/k$ and all $\ux=(x_0,\ldots, x_d)\in \mc(X'_K)$ with $x_{d-1}\in D_K^{(0)}$.
By \ref{HS3} \eqref{prop:diag3} holds for $\gamma\in (V_{d, X'_K|nD_K})_{x_{d-1}}$ for some big enough integer $n$.
Since that natural map
\eq{prop:diag3.5}{(V_{d, X_K|D_K})_{x_{d-1}}\overset{u_K^*}{\longrightarrow} \frac{(V_{d, X'_K|D_K})_{x_{d-1}}}{(V_{d, X'_K|nD_K})_{x_{d-1}}}}
is surjective, it suffices to show \eqref{prop:diag3} for $\gamma=u_K^*\beta$ with $\beta\in (V_{d, X_K|D_K})_{x_{d-1}}$.
Since maximal chains $\ux\in \mc(X'_K)$ with $x_{d-1}\in D_K$ correspond uniquely to maximal chains  in $X_K$ whose 
1-codimensional point is the generic point of $D_K$, the vanishing \eqref{prop:diag3}  follows in this case
directly from the vanishing in \ref{prop:diag1} and the definition of the map $c_{\ux}$ in \eqref{para:hlc2}, which is used in Definition \ref{defn:HLS}. 
This proves claim \ref{prop:diag-claim}.
\medbreak

Shrinking  around the generic point of $D$, we may assume further $D=\Div(t)$ for some $t\in \sO(X)$
defining  an \'etale morphism  $v:X\to \A^1_D$, which satisfies $v^{-1}(0_D)=D$.
By \cite[Lemmas 4.2, 4.4]{S-purity} the morphism 
\[v^*:\frac{F(\A^1_D\setminus 0_D)}{\tF(\A^1_D, 0_D)} \longrightarrow\frac{F(X\setminus D)}{\tF(X,D)}\]
becomes  an isomorphism when we shrink $D$ to its generic point.
Thus we may assume $a=v^*b$ for some $b\in F(\A^1_D\setminus 0_D)$.
By \cite[Lemma 5.9]{S-purity}, we have
\[\frac{F(\A^1_D\setminus 0_D)}{\tF(\A^1_D, 0_D)} \cong \frac{\tF(\P^1_D\setminus 0_D,\infty_D)}{\tF(\P^1_D,0_D+\infty_D)},\]
so we may assume $b\in \tF(\P^1_D\setminus 0_D,\infty_D)$.
It remains to show 
$b\in \tF(\P^1_D, 0_D+\infty_D)$.
By Corollary \ref{cor:diag}\ref{cor:diag2}  it therefore suffices to show,
that for all $K/k$ and all $\ux=(x_0,\ldots, x_{d})\in \mc(\A^1_{D_K})$ with $x_{d-1}\in {D_K}^{(0)}$ (where $D_K$ is embedded in $\A^1_{D_K}$ 
along the zero-section) we have 
\eq{prop:diag4}{(b,\gamma)_{\A^1_{D_K}/K,\ux}=0,\quad \text{for all } \gamma\in (V_{d, \A^1_{D_K}| 0_{D_K}})_{x_{d-1}}.}
To this end we first observe that by a similar argument as was used around \eqref{prop:diag3.5},
the condition in \ref{prop:diag1} also holds with $(V_{d, X_K|D_K})_{x_{d-1}}$ replaced by its Nisnevich stalk
$(V_{d, X_K|D_K})_{x_{d-1}}^h$ and we may as well consider the Nisnevich stalk  of $V_{d, \A^1_{D_K}|0_{D_K}}$ in \eqref{prop:diag4}.
But $v^*$ induces an isomorphism  $\sO^h_{\A^1_{D_K}, 0_{D_K}}\xr{\simeq}\sO^h_{X_K, x_{d-1}}$.
Thus the norm map $K^M_d(K^h_{X_K, x_{d-1}}) \to K^M_d(K^h_{\A^1_{D_K}, 0_{D_K}})$
induces an isomorphism
\[\Nm: (V_{d, X_K|D_K})_{x_{d-1}}^h\stackrel{\simeq}{\longrightarrow} (V_{d, \A^1_{D_K}|0_{D_K}})^h_{x_{d-1}}.\]
Now let $\gamma$ and $\ux\in \mc(\A^1_{D_K})$ be as in  \eqref{prop:diag4}.
By the construction we can lift  $\ux$ uniquely to $\uy\in \mc(X_K)$.
Take $\beta\in (V_{d, X_K|D_K})_{x_{d-1}}^h$ with $\Nm(\beta)=\gamma$.
Then
\[0=(a, \beta)_{X_K/K,\uy}= (v^*b, \beta)_{X_K/K,\uy}= (b, \Nm(\beta))_{\A^1_{D_K}/K,\ux}= (b, \gamma)_{\A^1_{D_K}/K,\ux}\]
where  the first equality holds by the condition in \ref{prop:diag1} and the third equality holds by \ref{HS5'} in Corollary \ref{cor:HS5}.
This yields \eqref{prop:diag4} and completes the proof.
\end{proof}


\providecommand{\bysame}{\leavevmode\hbox to3em{\hrulefill}\thinspace}
\providecommand{\MR}{\relax\ifhmode\unskip\space\fi MR }
\providecommand{\MRhref}[2]{%
  \href{http://www.ams.org/mathscinet-getitem?mr=#1}{#2}
}
\providecommand{\href}[2]{#2}

\end{document}